\definecolor{gray}{gray}{0.5}
\def\pr{{\text{\rm pr}}} 
\def\sec{{\text{\rm sect}}} 
\def\R{\mathbb{R}}
\def\N{\mathbb{N}}
\def\Z{\mathbb{Z}}
\def\H{\mathcal{H}}
\def\beq{\begin{equation}}
\def\endeq{\end{equation}}
\def\mc{\mathcal}
\def\bs{\begin{split}}
	\def\es{\end{split}}
\def\tx{\tilde x}
\DeclareMathOperator{\re}{Re}
\DeclareMathOperator{\sgn}{sgn}
\def\cplus{{c\ci+}}
\def\cmin{{c\ci-}}
\def\mc{\mathcal}
\def\bbone{{\mathbbm 1}}
\newcommand{\sL}{\mathscr L}
\newcommand{\F}{\mathcal F}
\newcommand{\ci}[1]{_{{}_{\!\scriptstyle{#1}}}}
\newcommand{\Be}{\begin{equation}}
\newcommand{\Ee}{\end{equation}}
\newcommand{\Bm}{\begin{multline}}
\newcommand{\Em}{\end{multline}}
\newcommand{\Bea}{\begin{eqnarray}}
\newcommand{\Eea}{\end{eqnarray}}
\newcommand{\Beas}{\begin{eqnarray*}}
	\newcommand{\Eeas}{\end{eqnarray*}}
\newcommand{\Benu}{\begin{enumerate}}
	\newcommand{\Eenu}{\end{enumerate}}
\newcommand{\Bi}{\begin{itemize}}
	\newcommand{\Ei}{\end{itemize}}
\def\intslash{\rlap{\kern  .32em $\mspace {.5mu}\backslash$ }\int}
\def\qsl{{\rlap{\kern  .32em $\mspace {.5mu}\backslash$ }\int_{Q_x}}}
\def\N{\mathbb N}
\def\emph#1{{\it #1 }}
\def\Ga{\Gamma}
\def\ga{\gamma}
\def\cf{{\it cf}}
\def\sgn{{\text{\rm sign }}}
\def\loc{{\text{\rm loc}}}
\def\supp{{\text{\rm supp}}}
\def\inn#1#2{\langle#1,#2\rangle}
\def\noi{\noindent}
\def\meas{{\text{\rm meas}}}
\def\lc{\lesssim}
\def\gc{\gtrsim}
\def\eps{\varepsilon}
\def\ep{\epsilon}
\def\ka{\kappa}
\def\la{\lambda}
\def\om{\omega}              \def\Om{\Omega}
\def\fI{{\mathfrak {I}}}
\def\fJ{{\mathfrak {J}}}
\def\fN{{\mathfrak {N}}}
\def\fS{{\mathfrak {S}}}
\def\bbD{{\mathbb {D}}}
\def\bbE{{\mathbb {E}}}
\def\bbN{{\mathbb {N}}}
\def\bbR{{\mathbb {R}}}
\def\bbZ{{\mathbb {Z}}}
\def\cF{{\mathcal {F}}}
\def\cH{{\mathcal {H}}}
\def\cJ{{\mathcal {J}}}
\def\cL{{\mathcal {L}}}
\def\cM{{\mathcal {M}}}
\def\cN{{\mathcal {N}}}
\def\cP{{\mathcal {P}}}
\def\cR{{\mathcal {R}}}
\def\cS{{\mathcal {S}}}
\def\cT{{\mathcal {T}}}
\def\be#1{\begin{equation}\label{ #1}}
\def\endeq{\end{equation}}
\def\endal{\end{align}}
\def\bas{\begin{align*}}
\def\eas{\end{align*}}
\def\bi{\begin{itemize}}
\def\ei{\end{itemize}}
\def\eps{\varepsilon}
\def\emph#1{{\it #1}}
\def\textbf#1{{\bf #1}}
\def\beq{\begin{equation}}
\def\endeq{\end{equation}}
\def\bs{\begin{split}}
\def\es{\end{split}}
\theoremstyle{plain}
\newtheorem{thm}{Theorem}[section]
\newtheorem{prop}[thm]{Proposition}
\newtheorem{lem}[thm]{Lemma}
\newtheorem{cor}[thm]{Corollary}
\newtheorem*{thm*}{Theorem}
\newtheorem*{conj*}{Conjecture}
\newtheorem*{openproblem*}{Open Problem}
\numberwithin{equation}{section}
\begin{document}
\title
[Families  of Hilbert transforms along homogeneous curves]{A maximal function for  families  of Hilbert transforms along homogeneous curves}

\author[S. Guo \  \  \  \  \ \ J. Roos \ \ \ \ \ \ A. Seeger \ \  \ \  \ \ P.-L. Yung] {Shaoming Guo \ \ Joris Roos \ \ Andreas Seeger \ \ Po-Lam Yung}
\address{Shaoming Guo: Department of Mathematics, University of Wisconsin-Madison, 480 Lincoln Dr, Madison, WI-53706, USA}
\email{shaomingguo@math.wisc.edu}

\address{Joris Roos: Department of Mathematics, University of Wisconsin-Madison, 480 Lincoln Dr, Madison, WI-53706,USA}
\email{jroos@math.wisc.edu}

\address{Andreas Seeger: Department of Mathematics, University of Wisconsin-Madison, 480 Lincoln Dr, Madison, WI-53706, USA}
\email{seeger@math.wisc.edu}

\address{Po-Lam Yung: Department of Mathematics, The Chinese University of Hong Kong, Ma Liu Shui, Shatin, Hong Kong}
\email{plyung@math.cuhk.edu.hk}

\dedicatory{In memory of Eli Stein}

\begin{abstract}
	Let  $H^{(u)}$ be the Hilbert transform along the parabola $(t, ut^2)$ where $u\in \mathbb R$. For a set $U$ of positive  numbers consider the maximal function
	$\cH^U \!f= \sup\{|H^{(u)}\! f|: u\in U\}$. We obtain an 
	(essentially) optimal result for the $L^p$ operator norm of $\cH^U$ when $2<p<\infty$. The results are proved  for families of Hilbert transforms along more general nonflat homogeneous curves.
\end{abstract}

\date{\today}

\maketitle

\section{Introduction and statement of results}

Given $b>1$, $u>0$, consider the 
curve $$\Ga_{u,b}(t)=(t, u \gamma_b(t)), \quad   t\in \bbR,$$ where $\gamma_b$ is homogeneous of degree $b$, with $\gamma_b(\pm 1)\neq 0$. 

That is,  there are 
$c_+\neq 0$, $c_-\neq 0$ such that
\Be\label{gammabdef}\gamma_b(t)= \begin{cases}
	c\ci+ t^b,\,\,  &t>0,
	\\
	c\ci- (-t)^b, \,\,&t<0.
\end{cases}
\Ee

For $f\in \cS(\bbR^2)$  the  Hilbert transform along  $\Ga_{u,b}$ is defined by

\beq\notag
{H}^{(u)}\! f(x)= p.v. \int_{\R} f(x_1-t, x_2-u \gamma_b(t) )\frac{dt}{t}.
\endeq
For  an arbitrary  nonempty $U\subset \bbR$ consider the  maximal function
\beq\label{180722e1.2}
\mc{H}^U \!f(x)=\sup_{u\in U} |{H}^{(u)}\! f(x)|.
\endeq

The individual operators 
$H^{(u)}$  extend to  bounded operators on $L^p(\bbR^2)$ for $1<p<\infty$ (see 
\cite{SW78}, \cite{DR86}).  
The purpose of this paper is to prove, for $p>2$,  optimal  $L^p$ bounds for the maximal operator $\cH^U$ in terms of suitable  properties of  $U$.

Our maximal function is motivated 
by a similar one 
%n for our maximal  function problem is given by results on a similar maximal function 
involving directional Hilbert transforms which correspond to the limiting case $b=1$, $c_+=-c_-$ not covered here.  This   maximal function  for Hilbert transforms along lines was considered by Karagulyan \cite{Kar07} who proved  that in this case the $L^2\to L^{2,\infty} $
operator norm is bounded below by $c \sqrt{\log(\# U)}$; the lower bound was extended to all $L^p$ by \L aba, Marinelli and Pramanik \cite{LMP17}.   Demeter and Di Plinio 
\cite{DD14} showed the  upper bound $O(\log (\#U))$ for $p>2$ (see also \cite{Dem10} for the sharp
$L^2$  result with bound $O(\log (\#U))$).  Moreover there is a sharp  bound 
$\approx \sqrt{\log (\#U)} $ for  lacunary sets of directions (see also Di Plinio and Parissis \cite{DP17}) and there are other improvements for   direction sets of Vargas type. 
Another  motivation for our work comes from the recent papers \cite{GHLR17}, \cite{DGTZ18}  which take up the curved cases and  analyze 
the linear operator $f\mapsto H^{(u(\cdot))}f$ for special classes  of measurable functions $x\mapsto u(x)$. 
\cite{GHLR17} covers the case when $u(x) $ depends only on $x_1$  
and \cite{DGTZ18} covers the case where $u$ is Lipschitz. The analogous questions for variable lines are still not completely resolved (\cf.  \cite{Bat13}, \cite{BT13} for partial $L^p$ ranges in the one-variable case, and
\cite{Guo15} and the references therein for partial results related to the Lipschitz case).

For our curved variant we seek to get sharp  results about the  dependence of the operator norm
\[
\|\cH^U\|_{L^p\to L^p} =\sup\{\|\cH^U\! f\|_p: \|f\|_p\le 1\}\] 
on $U$. 
Unlike  in the case for lines we obtain  for $b>1$ an optimal  bound when  $p>2$ and also observe a different type of dependence on $U$; namely it is not the cardinality of $U$ that determines  the size  of the operator norm for the maximal operator but rather  the minimal number of intervals of the form $(R,2R)$ that is needed to cover $U$. This number is  comparable to
\Be \label{fNUdef} \fN(U):=1 +\#\{n\in \bbZ: [2^n, 2^{n+1}] \cap U\neq \emptyset\}.\Ee

\begin{thm}\label{hil-max}
	For every $p\in (2,\infty)$, 
	the operator $\cH^U$ is bounded on $L^p$ if and only if $\fN(U)<\infty$. Moreover, 
	$$
	\|\cH^U\|_{L^p\to L^p}\approx 
	\sqrt{\log (\fN(U))}\,.
	$$
	The constants implicit in this equivalence depend only on $p$, $b$ and $|c_+/c_-|$.
\end{thm}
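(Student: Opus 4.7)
The plan is to decompose $U$ into its $\fN(U)$ dyadic pieces, reduce each scale to the model case $V \subset [1,2)$ via homogeneity, and then combine the pieces with a $\sqrt{\log\fN(U)}$ loss. Writing $U_n := U \cap [2^n, 2^{n+1})$, the homogeneity $\gamma_b(\lambda t) = \lambda^b \gamma_b(t)$ yields the intertwining
\[
D_\mu^{-1} H^{(u)} D_\mu = H^{(\mu u)}, \qquad D_\mu(x_1, x_2) := (x_1, \mu x_2),
\]
where $D_\mu$ is an $L^p$-isometry up to a scalar; consequently $\|\cH^{U_n}\|_{L^p\to L^p} = \|\cH^{V_n}\|_{L^p\to L^p}$ with $V_n := 2^{-n} U_n \subset [1,2)$, so the problem reduces to a uniform single-scale estimate plus an almost-orthogonal combination of $\fN(U)$ pieces. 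The uniform estimate $\sup_{V \subset [1,2]} \|\cH^V\|_{L^p\to L^p} \lesssim 1$ for $1 < p < \infty$ I would prove via a Sobolev embedding in the parameter $u$: $\partial_u H^{(u)} f$ is, after integration by parts in $t$, dominated pointwise by an $L^p$-bounded auxiliary singular integral of the same family, reducing the continuous supremum over $V$ to the fixed-$u$ bound of \cite{SW78, DR86}.

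The heart of the matter is the scale-combination step. The Fourier multiplier
\[
m^{(u)}(\xi) = \pv \int e^{-i(t\xi_1 + u\gamma_b(t)\xi_2)}\,\frac{dt}{t}
\]
depends, by homogeneity, only on $\eta := \xi_1/(u|\xi_2|)^{1/b}$ and $\sgn\xi_2$, and it is essentially constant outside the transition region $|\eta|\sim 1$; for $u \in [2^n, 2^{n+1})$ this region is the parabolic shell $|\xi_1|^b \sim 2^n |\xi_2|$. Using smooth Littlewood--Paley projections $P_n$ onto these shells I would write
\[
H^{(u)} f = T^{(u)} P_n f + R^{(u)} f, \qquad u \in U_n,
\]
where $T^{(u)}$ isolates the $u$-dependent oscillation in the transition region and $R^{(u)}$ is a combination of $u$-independent pieces (essentially ordinary Hilbert transforms and constant multiples), whose supremum over $u \in U$ is $L^p$-bounded uniformly by Carleson--Hunt-type estimates for Littlewood--Paley partial sums. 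The remaining term $\sup_n |\cH^{U_n}(P_n f)|$ is then controlled, for $p > 2$, by a sub-Gaussian maximum / square function estimate that exploits the transversality of the shells $|\xi_1|^b \sim 2^n |\xi_2|$ (which relies on $b > 1$):
\[
\Big\|\sup_n \cH^{U_n}(P_n f)\Big\|_p \lesssim \sqrt{\log\fN(U)}\, \Big\|\Big(\sum_n |P_n f|^2\Big)^{1/2}\Big\|_p \lesssim \sqrt{\log\fN(U)}\,\|f\|_p,
\]
the final inequality being the Littlewood--Paley bound for the shell decomposition.

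For the matching lower bound I would use a Besicovitch-type construction: Schwartz functions Fourier-supported inside each shell, chosen so that $|m^{(u_n)}|$ is bounded below on the frequency support, summed with random signs, realize $\|\cH^U\|_{L^p\to L^p} \gtrsim \sqrt{\log\fN(U)}$ via a Khintchine inequality. The principal difficulty is the sub-Gaussian maximum step, which requires quantifying how the phases of $T^{(u)} P_n f$ vary across different $n$ in a manner strong enough to give only a $\sqrt{\log}$ loss rather than the naive $\fN(U)^{1/p}$; the curvature hypothesis $b > 1$ is essential, since it provides the transversality of the shells $|\xi_1|^b \sim 2^n|\xi_2|$ needed to surpass the $\log\fN(U)$ barrier that persists in the linear case $b=1$ of Demeter--Di Plinio \cite{DD14}.
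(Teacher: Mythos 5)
Your proposal runs into two substantive gaps, both bound up with the single-scale estimate $\sup_{V\subset[1,2]}\|\cH^V\|_{L^p\to L^p}\lesssim 1$. First, as stated (for all $1<p<\infty$) this is false: the paper's Remark (ii) observes, via a Besicovitch-set example, that $\cH^{[1,2]}$ already fails to be of restricted weak type $(2,2)$, so the single-scale bound is genuinely a $p>2$ phenomenon. Second, even for $p>2$, the ``Sobolev embedding in $u$'' route does not go through on $H^{(u)}$ directly: $\partial_u H^{(u)}$ has multiplier $\xi_2(\partial_2 m)(\xi_1,u\xi_2)$, and a stationary-phase computation shows this blows up near the $\xi_1$-axis like $|\xi_2/\xi_1^b|^{-1/(2(b-1))}$, so $\partial_u H^{(u)}$ is not an $L^p$-bounded auxiliary operator of the same family. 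What rescues the argument in the paper is that the $u$-derivative trick is applied only \emph{after} isolating the oscillatory piece $T^u_\pm$ (whose multiplier carries the factor $e^{i\Psi_\pm}$ on a small symbol) and beating the $O(2^\ell)$ loss from $\partial_u$ with the $2^{-\ell\varepsilon(p)}$ gain from the Mockenhaupt--Seeger--Sogge local smoothing estimate \eqref{lsineq}; this is where the restriction $p>2$ enters and where the real work is. Without that gain there is no bound for the continuous supremum, and your account of the single-scale estimate conceals the hardest ingredient of the upper bound.

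The second issue is that the $\sqrt{\log\fN(U)}$ loss is placed in the wrong part of the decomposition. In the paper, it is the \emph{non-oscillatory} singular integral $S^u$ that carries the logarithmic loss (Theorem \ref{Suthm}, proved via the Chang--Wilson--Wolff inequality), while the oscillatory piece $T^u_\pm$ is bounded uniformly over \emph{all} $u>0$ (Theorem \ref{Tuthm}) -- precisely the opposite of your accounting. Your split $H^{(u)}f = T^{(u)}P_nf + R^{(u)}f$ is too coarse: $R^{(u)}$ is not literally a combination of fixed constant-multiple operators, because the multiplier only approaches its limiting constants $-\pi i\,\sgn\xi_1$ and $\rho$ with the polynomial rate of Lemma \ref{Hoelderlemma}, and the accumulated tails across the $\fN(U)$ active shells are exactly where the log must live. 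A naive Cotlar/Carleson--Hunt argument for $\sup_u R^{(u)}$ gives a $U$-independent bound only if $R^{(u)}$ were a genuine truncated singular integral; combined with an $\ell^p\hookrightarrow\ell^\infty$ argument for the shell-localized transition piece this would yield a $U$-independent bound for $\cH^U$ -- contradicting the lower bound. So either $R^{(u)}$ absorbs the Hölder-tail error and the claim of uniform boundedness fails, or you have a third error term $E^{(u)}$ that you have dropped. Finally, for the lower bound the paper does not rely on random signs and Khintchine; it constructs a deterministic \emph{tree system} of frequency-localized functions à la Karagulyan (Proposition \ref{prop:kara}), and the tree structure is what forces $\sup_j|\sum_{\tau(w)\ge j}f_w|\gtrsim\sqrt{\mu}\,\|f\|_2$. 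A random-sign construction would need a separate argument to extract an $L^2$ lower bound for the maximal operator rather than an upper bound for a square function.
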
 

\smallskip 

\noi{\it Remarks.} (i) The lower bound  $c\sqrt{\log(\fN(U))}$  can be extended to all $p>1$. Indeed, if we had a smaller operator norm for some $p_0<2$ we could, by interpolation, also  deduce a better upper bound for $p>2$ which is not possible. The lower  bound for $p<2$ is generally not efficient, see however some results for lacunary sets in \S\ref{lacunary-section}.

(ii)  
Concerning upper bounds there is no endpoint result for general $U$ with $\fN(U)<\infty$ when  $p=2$. In fact one can show  using the  Besicovitch  set that for $U=[1,2]$ the operator $\cH^U$ even fails to be of restricted weak type $(2,2)$. Cf. \cite[\S8.3]{STW03} for the details of a similar argument  in the context of  maximal functions for circular means.

(iii) In our theorem  we  avoid the cases $c_\pm =0$, for the following reasons.
For the case  $c_+=0=c_-$  in \eqref{gammabdef} the operators $H^{(u)}$ are equal to the Hilbert transform along a fixed line and the problems on $\cH^U$ become trivial. 
For the choices
$c_+\neq 0$, $c_-=0$ and 
$c_+=0$, $c_-\neq 0$ the  curves are  unbalanced and by
\cite[\S6]{CoR86} the individual operators $\cH^u$ are not bounded  on $L^p$.

(iv) 
The operators $\cH^{U}$ are invariant under conjugation with dilation operators with respect to the second variable; i.e. if $\delta^{(2)}_v\!f(x)=f(x_1, vx_2)$ then we have 
$\cH^{vU}=\delta_{v^{-1}}^{(2)}\cH^{U}\delta_v^{(2)}$ and thus the $L^p$ operator norm  of $\cH^{U}$ and $\cH^{vU}$ are the same.
This shows that any dependence of $c_+, c_-$ in the operator norms can always be reduced to a dependence on
just  $|c_+/c_-|$ as one can assume that $c_+=1$. The implicit constants in the above theorems  depend on $c_\pm, b,p$ but are uniform as long as  $|c_+/c_-| $ is taken in a compact subset of $(0,\infty)$,  and $b$ and $p$ are taken in  compact subsets of $(1,\infty)$.   Thus implicit constants in all inequalities in this paper will be allowed to depend on $c_\pm, b$, with the above understanding of boundedness on compact sets. 
%We are mostly interested in the case of the parabola $(t,t^2)$.

\smallskip

\subsection*{\it This paper}
In \S\ref{deccomp-sect} we describe the basic decomposition \eqref{hilbtransformdec} of the Hilbert transform $H^{(u)}$ 
into a standard nonisotropic singular  integral operator $S^u$ and two operators $T^u_\pm$ which can be viewed as singular Fourier integral operators with favorable frequency localizations. The growth condition in terms of $\sqrt{\log \fN(U)}$ is only relevant for  the maximal function $\sup_{u\in U} |S^uf|$ for which we prove $L^p$ bounds for all $1<p<\infty$.  Here we use the Chang-Wilson-Wolff inequality, together with a variant of an approximation argument in \cite{GHLR17}.
It turns out that the full maximal operators associated to the $T^u_\pm$ are bounded in $L^p(\bbR^2)$ for $2<p<\infty$.
This is related to space-time $L^p$ inequalities  (so-called local smoothing estimates) for Fourier integral operators in \cite{MSS93}. This connection has already been used by Marletta and Ricci in their work \cite{MR} on families of maximal functions along homogeneous curves. The results for $S^u$, $T^u_\pm$ are formulated in \S\ref{deccomp-sect} as Theorems \ref{Suthm} and \ref{Tuthm}.

\S\ref{toolssect} contains several auxiliary results. A version of our maximal function for Mikhlin multipliers (dilated in the second variable)  is given in \S\ref{mikhlinmaxsect}; 
this is used to prove Theorem  \ref{Suthm} in \S\ref{Suproofsect}.  Theorem \ref{Tuthm} is proved in \S\ref{Tuproofsect}.
In \S\ref{lacunary-section} we prove some results 
about upper bounds for the maximal functions
$\sup_{u\in U} |T^u_\pm f|$ when $U$ is a lacunary set; one of these  results will be helpful in the proof of lower bounds for the operator norm.

The  proof of lower bounds  is given in \S\ref{karagulyan-sect}. 
The arguments for the  lower bounds in $L^2$  are based on ideas 
of Karagulyan \cite{Kar07}. Appendix \ref{cotlar-appendix}
contains a  Cotlar type  inequality which is used in the proof of Theorem \ref{Suthm}. 

\subsection*{\it Acknowledgements} S.G. was supported in part 
	by a direct grant for research from the Chinese University of Hong Kong (4053295).
	A.S. was supported in part by National Science Foundation grants DMS 1500162 and 1764295.
	He would like to thank the Isaac Newton Institute for Mathematical Sciences, Cambridge, for support and hospitality during the program 
 Approximation, Sampling and Compression in Data Science where some work on this paper was undertaken. This work was supported by EPSRC grant no EP/K032208/1.
	P.Y. was supported in part by a General Research Fund CUHK14303817 from the Hong Kong Research Grant Council, and direct grants for research from the Chinese University of Hong Kong (4441563 and 4441651).\\
The authors thank Rajula Srivastava for reading a draft of this paper and for providing useful comments.

\section{Decomposition of the Hilbert transforms}\label{deccomp-sect}

Let $\chi_+$ be supported in $(1/2,2)$ such that $\sum_{j\in \bbZ} \chi_+(2^j t) =1$ for $t>0$.
Let $\chi\ci-(t)=\chi_+(-t)$ and $\chi=\chi\ci+ + \chi\ci-$.
We define measures $\sigma\ci+$ and $\sigma\ci-$ by
\Be\label{sigmapmdef}
\inn{\sigma\ci\pm} {f} = \int f(t,\ga_b(t)) \chi\ci{\pm}(t) \frac {dt} {t}.
\Ee
Let, for $j\in \bbZ$,   the measure $\sigma_j$ be defined by 
\[\inn{\sigma_j  } {f} = \int f(t,\ga_b(t)) \chi(2^j t) \frac {dt} {t}.\]
By homogeneity of $\gamma_b$ we see that (in the sense of distributions)
$\sigma_j = 2^{j(1+b)} \sigma_0 (\delta_{2^j}^b \cdot)$ with  $\delta_{t}^bx= (tx_1, t^bx_2)$.
Observe that  $\sigma_0=\sigma\ci+ +\sigma\ci-$ satisfies the cancellation condition $\widehat \sigma_0(0)=0$ (where $\widehat \sigma (\xi)\equiv \cF[\sigma] (\xi) = \int e^{-i\inn x\xi} d\sigma(x)$ denotes the Fourier transform). 
For Schwartz functions $f$ the Hilbert transform along $\Gamma_b$ is then given by
$$H f= \sum_{j\in \bbZ}{\sigma_j}* f.$$

\subsection{\it Asymptotics for the  Fourier transform of $\sigma_0$}
We analyze  $\widehat {\sigma_\pm}(\xi)$ for large $\xi$. 
We have
\[
\widehat {\sigma\ci\pm} (\xi) = 
\int e^{-i\psi_\pm(t, \xi)}  \chi\ci\pm ( t) \frac {dt} {t}\]
with
\begin{align*} \psi\ci+(t,\xi) &= t \xi_1  + \cplus t^b\xi_2 ,
\\
\psi\ci-(t,\xi) &= t \xi_1  + \cmin (-t)^b\xi_2 .
\end{align*} 
Observe that \Be \label{firstder}
\begin{aligned}\partial_t\psi\ci+(t,\xi) &= \xi_1+ c\ci+ b t^{b-1}\xi_2, \quad 
	\\ \partial_t\psi\ci-(t,\xi) &= \xi_1- \cmin b (-t)^{b-1}\xi_2.
\end{aligned}\Ee
Thus $\psi\ci+$ has a critical point $t_+(\xi)>0 $ when $\xi_1/(c\ci+\xi_2)<0$,
and $\psi\ci-$ has a critical point $t_-(\xi) <0$ when $\xi_1/(c\ci-\xi_2)>0$ ,
and $t_\pm(\xi)$ are given by
$$t_+(\xi)= \Big( \frac{-\xi_1}{bc_+\xi_2}\Big)^{\frac 1{b-1}}, 
\qquad
t_-(\xi)= - \Big( \frac{\xi_1}{bc_-\xi_2}\Big)^{\frac 1{b-1}}.
$$
These critical points are nondegenerate as we have
$$\partial_{tt}\psi\ci\pm(t,\xi)=c\ci\pm b(b-1) (\pm t)^{b-2}\xi_2.$$
Setting
$\Psi_\pm(\xi)=-\psi\ci\pm(t_\pm(\xi),\xi)$ we get
\begin{align*}
\Psi\ci+(\xi)
&=  (b-1) c\ci+\xi_2\Big( -\frac{\xi_1}{bc\ci+\xi_2}\Big)^{\frac {b}{b-1}},
\\
\Psi\ci-(\xi) 
&= (b-1) c\ci-\xi_2\Big( \frac{\xi_1}{bc\ci-\xi_2}\Big)^{\frac{b}{b-1}}.
\end{align*}
The functions $\Psi_\pm$ are homogeneous of degree one and putting $\xi_2=\pm 1$ we have 
the crucial lower bounds for the 
second derivatives of  $\xi_1\mapsto \Psi(\xi_1,\pm 1)$ needed for the application of the space time estimate in \S\ref{lstheoremsect}.

Assume $|\xi|>1$. We observe  that then 
\begin{subequations}
	\Be \label{lowerbdpsiplus} \inf_{1/3 \le t\le 3} \big|\partial_t \psi\ci+ (t,\xi)\big| \gc |\xi|
	\Ee if $\xi_1/c_+\xi_2$ does not belong to the interval $[-b (7/2)^{b-1} , -b (2/7)^{b-1} ]$.
	
	Likewise, again for $|\xi|>1$ we  observe that 
	\Be\label{lowerbdpsimin}
	\inf_{-3\le t\le -1/3} \big|\partial_t \psi\ci-(t,\xi) \big| \gc |\xi|\Ee 
\end{subequations}  if $\xi_1/c_-\xi_2$ does not belong to the interval $[b (2/7)^{b-1} , b (7/2)^{b-1} ]$.
These observations  suggest the following decomposition of $ \sigma_0$.

Let $\eta_0$ be supported in $\{|\xi|\le 100\}$  and equal to $1$ for $|\xi|\le 50$.
Let $\varsigma_+$ be a $C^\infty_c(\bbR)$ function supported on
$(b (1/4)^{b-1} , b 4^{b-1} )$ which is equal to $1$ on 
$[b (2/7)^{b-1} , b (7/2)^{b-1} ]$.
Let $\varsigma_-$ be a $C^\infty_c(\bbR)$ function supported on
$(-b 4^{b-1} , -b (1/4)^{b-1} )$ which is equal to $1$ on 
$[-b (7/2)^{b-1} , -b (2/7)^{b-1} ]$.
Then we decompose
\begin{subequations}
	\Be \label{sigmadecomp} 
	\sigma_0= \phi_0+\mu_{0,+} +\mu_{0,-}
	\Ee
	where $\phi_0$ is given by
	\Be\begin{aligned} \label{defofPhi}
		\widehat{ \phi_0}(\xi) =
		\eta_0(\xi) \widehat \sigma_0(\xi) 
		&+ (1-\eta_0(\xi)) \big(1- \varsigma_- (\tfrac{\xi_1}{c_+\xi_2})\big)\widehat \sigma_+(\xi) 
		\\
		&+ (1-\eta_0(\xi)) \big(1- \varsigma_+(\tfrac{\xi_1}{c_-\xi_2})\big)\widehat \sigma_-(\xi) 
	\end{aligned}\Ee
	and $\mu\ci{0,\pm}$ are given by
	\begin{align}
	\widehat \mu_{0,+} (\xi) &= (1-\eta_0(\xi))  \varsigma_- (\tfrac{\xi_1}{c_+\xi_2})\widehat \sigma_+(\xi) ,
	\\
	\widehat \mu_{0,-} (\xi) &= (1-\eta_0(\xi))  \varsigma_+(\tfrac{\xi_1}{c_-\xi_2})\widehat \sigma_-(\xi) .
	\end{align}
\end{subequations}

\begin{lem}\label{symbollemma} 
	(i) $\phi_0$ is a Schwartz function with
	$\widehat \phi_0(0)=0$. 
	
	(ii) The function  $\widehat \mu_{0,+}$  is supported on
	\begin{subequations}
		\Be\label{sectorforomegaplus}
		{\rm Sect}\ci+= \big\{\xi: |\xi|>50, \,\,
		-b 4^{b-1} < \frac{\xi_1}{c_+\xi_2} < -\frac{b}{4^{b-1} }\big\}
		\Ee 
		and   
		satisfies 
		\[\widehat \mu_{0,+} (\xi) 
		= \omega_+(\xi) e^{i\Psi_+(\xi) } + E_+(\xi)
		\]
		where $\omega_+$ is a standard symbol of order $-1/2$, and $E_+(\xi)$ is a Schwartz function, both supported on $\rm{Sect}\ci+$.
		
		(iii) The function  $\widehat \mu_{0,-}$  is supported on
		\Be \label{sectorforomegaminus}
		{\rm Sect}\ci-= \big\{\xi: |\xi|>50, \,\,
		\frac{b}{4^{b-1}} < \frac{\xi_1}{c_-\xi_2} < b 4^{b-1} \big\}
		\Ee 
	\end{subequations}
	and   
	satisfies 
	$$\widehat \mu_{0,-} (\xi) = \omega_-(\xi) e^{i\Psi_-(\xi) } + E_-(\xi)$$
	where $\omega_-$ is a standard symbol of order $-1/2$, and $E_-(\xi)$ is a Schwartz function,
	both supported on $\rm{Sect}\ci-$.
\end{lem}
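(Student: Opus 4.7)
For the cancellation at $\xi=0$: at $\xi=0$ the factor $\eta_0$ equals $1$, killing the $(1-\eta_0)$ terms, so $\widehat{\phi_0}(0) = \widehat{\sigma_0}(0) = \int \chi_+(t) t^{-1}\,dt + \int \chi_-(t) t^{-1}\,dt = 0$ (using $\chi_-(t) = \chi_+(-t)$ and $s = -t$ in the second integral). To prove $\phi_0 \in \cS(\bbR^2)$ it suffices to show $\widehat{\phi_0} \in \cS(\bbR^2)$; smoothness is evident from the defining formula. For rapid decay with all derivatives, observe that on $\{|\xi| \geq 100\}$,
\[
\widehat{\phi_0}(\xi) = \bigl(1-\varsigma_-(\tfrac{\xi_1}{c_+\xi_2})\bigr)\widehat{\sigma_+}(\xi) + \bigl(1-\varsigma_+(\tfrac{\xi_1}{c_-\xi_2})\bigr)\widehat{\sigma_-}(\xi),
\]
and on the support of each cutoff the ratio $\xi_1/(c_\pm\xi_2)$ lies outside the critical interval appearing in \eqref{lowerbdpsiplus}--\eqref{lowerbdpsimin}, yielding the lower bound $|\partial_t \psi_\pm(t,\xi)| \gtrsim |\xi|$ for $t\in\supp\chi_\pm$. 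Repeated integration by parts in $t$ produces $|\widehat{\sigma_\pm}(\xi)| \lesssim_N (1+|\xi|)^{-N}$ for every $N$, and since $\xi$-derivatives of $\widehat{\sigma_\pm}$ merely insert the bounded polynomial factors $(-it)^{\alpha_1}(-i\gamma_b(t))^{\alpha_2}$ into the amplitude, the same argument controls $\partial_\xi^\alpha\widehat{\phi_0}$, proving $\phi_0\in\cS(\bbR^2)$.

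For part (ii) (and (iii), which is identical up to sign changes), the support claim is built into the definition of $\widehat{\mu_{0,\pm}}$ via the cutoffs $\varsigma_\mp$. The substantive content is a stationary phase asymptotic. On $\mathrm{Sect}_+$ the phase $\psi_+(t,\xi)=t\xi_1+c_+t^b\xi_2$ has the unique critical point
\[
t_+(\xi) = \bigl(-\xi_1/(bc_+\xi_2)\bigr)^{1/(b-1)} \in (1/4, 4),
\]
smooth and homogeneous of degree $0$ in $\xi$, with nondegenerate second derivative $\partial_{tt}\psi_+(t_+(\xi),\xi) = c_+b(b-1)t_+(\xi)^{b-2}\xi_2$ of size $\asymp |\xi|$. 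Writing $\xi=\lambda\omega$ with $\lambda=|\xi|$ and $\omega$ ranging over a compact subset of the angular cross-section of $\mathrm{Sect}_+$, the phase factors as $\psi_+(t,\xi)=\lambda\tilde\psi(t,\omega)$ with critical point $t_+(\omega)$ independent of $\lambda$. The method of stationary phase with large parameter $\lambda$ then yields an asymptotic expansion
\[
\widehat{\sigma_+}(\xi) \;\sim\; e^{-i\lambda\tilde\psi(t_+(\omega),\omega)}\sum_{k\ge 0}\alpha_k(\omega)\,\lambda^{-k-1/2}, \qquad \lambda\to\infty,
\]
with $\alpha_k\in C^\infty$, uniformly in $\omega$ together with all derivatives in $(\omega,\lambda)$. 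Since $-\lambda\tilde\psi(t_+(\omega),\omega)=\Psi_+(\xi)$, Borel's resummation theorem produces a classical symbol $a(\xi)$ of order $-1/2$ on a conic neighborhood of $\mathrm{Sect}_+$ such that $\widehat{\sigma_+}(\xi)-e^{i\Psi_+(\xi)}a(\xi)$ is smooth and rapidly decreasing with all derivatives. Setting $\omega_+(\xi):=(1-\eta_0(\xi))\varsigma_-(\xi_1/(c_+\xi_2))\,a(\xi)$ gives a standard symbol of order $-1/2$ supported on $\mathrm{Sect}_+$, and $E_+(\xi):=\widehat{\mu_{0,+}}(\xi)-e^{i\Psi_+(\xi)}\omega_+(\xi)$ is the same cutoff applied to the rapidly decreasing stationary phase remainder, hence Schwartz with support on $\mathrm{Sect}_+$.

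The main technical input is the uniform control of all $\xi$-derivatives in the stationary phase expansion; this is a standard consequence of stationary phase with a parameter once the homogeneity rescaling $\xi=\lambda\omega$ has been performed (see e.g.\ H\"ormander, \emph{Analysis of Linear Partial Differential Operators}, vol.\ I, or Stein, \emph{Harmonic Analysis}, Ch.\ VIII). The remaining bookkeeping---Borel summation of the asymptotic series, verification of the symbol bounds for $\omega_+$, and the observation that multiplication by the smooth cutoff $(1-\eta_0)\varsigma_\mp$ preserves both the symbol class of $a$ and the Schwartz class of the remainder---is then routine.
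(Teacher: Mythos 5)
Your proof is correct and fills in the same argument the paper sketches in one sentence ("The formulas for $\widehat\mu_{0,\pm}(\xi)$ follow by the method of stationary phase"). For (i), using the lower bounds \eqref{lowerbdpsiplus}--\eqref{lowerbdpsimin} on the supports of $(1-\varsigma_\mp)$ for non-stationary-phase integration by parts, together with the remark that $\xi$-derivatives insert bounded polynomial factors in $t$ under the compactly supported amplitude, is exactly the intended argument; the cancellation $\widehat\phi_0(0)=\widehat\sigma_0(0)=\widehat\sigma_+(0)+\widehat\sigma_-(0)=0$ matches the paper. For (ii)/(iii), the homogeneity rescaling $\xi=\lambda\omega$ followed by stationary phase with parameter $\omega$ is the standard device and is what the paper means. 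The only stylistic difference is your invocation of Borel resummation to produce the symbol $a$; equivalently (and slightly more directly) one may define $\omega_\pm := e^{-i\Psi_\pm}\widehat\mu_{0,\pm}$ and read off the symbol estimates $|\partial_\xi^\alpha\omega_\pm|\lesssim |\xi|^{-1/2-|\alpha|}$ from the stationary phase expansion with remainder (uniform in the compact angular cross-section), in which case $E_\pm\equiv 0$. Either way the content is the same, so I would call this the same approach as the paper, written out in full.
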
 
\begin{proof} 
	In view of the lower bounds for $\partial_t\psi_\pm$ stated in 
	\eqref {lowerbdpsiplus}, \eqref {lowerbdpsimin} under their respective assumptions 
	we see that  $\phi_0$ is a Schwartz function. We have that $\widehat \sigma_+(0)=-\widehat \sigma_-(0)$ and it follows that $\widehat \phi_0(0)=0$.
	The formulas for 
	$\widehat \mu_{0,\pm} (\xi) $  follow by the method of stationary phase.
\end{proof}

We now define  $\Phi_0$ by $\widehat \Phi_0=\widehat \phi_0+ E\ci+ +E\ci-$  so that $\Phi_0$ is a Schwartz function with $\widehat \Phi_0(0)=0$.
Define $\Phi_j$ , $\ka_{j,\pm} $
by $$\widehat \Phi_j(\xi)=\widehat \Phi_0 (2^{-j}\xi_1, 2^{-jb}\xi_2)$$ and
$$\widehat {\kappa_{j,\pm}}(\xi) = \omega_\pm (2^{-j}\xi_1, 2^{-jb}\xi_2)
e^{i \Psi_\pm(2^{-j}\xi_1, 2^{-jb}\xi_2)} .
$$ Define  operators $S^u$ and $T^u_\pm$ by 
\begin{align} 
\widehat {S^u f}(\xi) &= \sum_{j\in \bbZ} \widehat \Phi_j (\xi_1, u\xi_2)\widehat f(\xi) 
\\
\widehat {T_\pm^u f}(\xi) &= \sum_{j\in \bbZ} \widehat {\kappa_{j,\pm}}  (\xi_1, u\xi_2)\widehat f(\xi) 
\end{align}

These expressions are at least well defined if $f$ is a Schwartz function whose Fourier transform is compactly supported in $\bbR^2\setminus \{0\}$.
For these functions we have then decomposed our Hilbert transform as
\Be \label{hilbtransformdec}H^{(u)}\!f = S^u f + T_+^u f + T^u_-f .\Ee

\medskip 

For the upper bound in Theorem \ref{hil-max} we shall prove 

\begin{thm} \label{Suthm} For $1<p<\infty$,
	\Be
	\big\|\sup_{u\in U} |S^u f| \big\|_p \lc \sqrt{\log(\fN(U))} \|f\|_p.
	\Ee
\end{thm}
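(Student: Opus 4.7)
The plan is to reduce the continuous supremum over $u\in U$ to a discrete dyadic supremum over the scale set $\cN := \{n\in\bbZ : U\cap[2^n,2^{n+1}]\neq\emptyset\}$ (so $\fN(U)\approx |\cN|$) at a cost that is $L^p$-bounded uniformly in $\fN(U)$, and then to handle the remaining discrete supremum by a Chang--Wilson--Wolff argument that pays the $\sqrt{\log|\cN|}$ factor.

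First I would recognize $S^u$ as a nonisotropic Mikhlin-type multiplier operator. The multiplier
\[
m(\xi) := \sum_{j\in\bbZ} \widehat{\Phi_j}(\xi)
\]
is smooth off the origin and satisfies the natural nonisotropic symbol estimates (since $\widehat\Phi_0$ is Schwartz with $\widehat\Phi_0(0)=0$), and $S^u$ is the Fourier multiplier operator associated with $m_u(\xi):=m(\xi_1,u\xi_2)$. For $u$ ranging over a single dyadic interval $[2^n,2^{n+1}]$, the second-variable dilation invariance of Remark~(iv) together with the dilated Mikhlin maximal operator of \S\ref{mikhlinmaxsect} yields
\[
\bigl\|\sup_{u\in[2^n,2^{n+1}]\cap U}|S^u f|\bigr\|_p \lesssim \|f\|_p \qquad (1<p<\infty),
\]
with constant uniform in $n$. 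Fixing one representative $u_n \in U\cap[2^n,2^{n+1}]$ for each $n\in\cN$, this reduces the proof to establishing
\[
\bigl\|\sup_{n\in\cN}|S^{u_n}f|\bigr\|_p \lesssim \sqrt{\log|\cN|}\,\|f\|_p.
\]

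For the discrete supremum I would adapt the approximation scheme from \cite{GHLR17}. Write $S^{u_n}f = A_n f + R_n f$, where $A_n f$ is a smoothed conditional-expectation-type approximation along a nonisotropic dyadic grid calibrated to the scale $u_n$, fitting into the Chang--Wilson--Wolff framework, while $R_n f$ is a remainder pointwise dominated by a nonisotropic Hardy--Littlewood maximal function of $f$ via the Cotlar-type inequality of Appendix~\ref{cotlar-appendix}. The remainder contributes $O(\|f\|_p)$ uniformly in $\cN$. For the principal term, the differences $\Delta_n f := A_n f - A_{n-1}f$ have nonisotropic frequency localizations whose $\ell^2$ square function is $L^p$-bounded by Littlewood--Paley theory, and applying Chang--Wilson--Wolff to the sequence $(\Delta_n f)_{n\in\cN}$ gives
\[
\bigl\|\sup_{n\in\cN}|A_n f|\bigr\|_p \lesssim \sqrt{\log|\cN|}\,\Bigl\|\Bigl(\sum_{n\in\cN}|\Delta_n f|^2\Bigr)^{1/2}\Bigr\|_p \lesssim \sqrt{\log|\cN|}\,\|f\|_p,
\]
which combined with the estimate for $R_n$ completes the proof.

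The main obstacle will be constructing the right martingale-type approximation $(A_n)$ so that Chang--Wilson--Wolff truly applies: the symbols $m_{u_n}$ have nontrivially overlapping supports as $n$ varies, so one must build auxiliary nonisotropic frequency cutoffs that become genuinely lacunary after rescaling by $u_n$, and then show that the non-lacunary discrepancies land inside $R_n$. The Cotlar-type inequality of Appendix~\ref{cotlar-appendix} is the precise tool that allows this overlap to be absorbed into a maximal-function estimate, bounded on $L^p$ independently of $|\cN|$, rather than being suffered in the CWW step (where it would spoil the square-function bound).
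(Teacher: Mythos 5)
Your proposal identifies the right ingredients (Chang--Wilson--Wolff, Cotlar, the GHLR-style one-variable approximation, reduction to a discrete scale set $\cN$) but both of your main steps have gaps that the paper's argument is specifically structured to avoid. The discrete reduction step does not follow from a per-interval bound: knowing $\bigl\|\sup_{u\in[2^n,2^{n+1}]\cap U}|S^u f|\bigr\|_p\lesssim\|f\|_p$ uniformly in $n$ gives no control on $\bigl\|\sup_{n\in\cN}\sup_{u\in[2^n,2^{n+1}]\cap U}|S^u f - S^{u_n}f|\bigr\|_p$; the naive $\ell^p\subset\ell^\infty$ embedding over $n\in\cN$ would cost $\fN(U)^{1/p}$, far worse than $\sqrt{\log\fN(U)}$, and there is no single maximal function pointwise dominating the within-interval variation uniformly in $n$. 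The paper does not decouple continuous from discrete this way; writing $u=(2^n s)^b$ and applying the fundamental theorem of calculus in $s$, it reduces to \emph{two} discrete maximal inequalities --- over the representatives $R^{2^{nb}}_l f$ and over the $s$-derivatives $\partial_s R^{(2^n s)^b}_l f$ --- and both are handled by Theorem~\ref{SIO-thm}, so both contribute the $\sqrt{\log(1+\#\cN)}$ factor. Neither part is free.

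The second gap is in how you invoke Chang--Wilson--Wolff. As stated in Proposition~\ref{prop:CWW85}, CWW is a good-$\lambda$ inequality relating a single function to its dyadic square and maximal functions with respect to the spatial filtration. Theorem~\ref{SIO-thm} applies it to each $T_n f$ individually (with the second-variable dyadic filtration), and the factor $\sqrt{\log N}$ arises from the union bound over $n\in\cN$ together with the choice $\eps_N\approx(\log N)^{-1/2}$, which makes $N e^{-c_1\eps_N^{-2}}\lesssim 1$. Your scheme instead asks for a CWW inequality for a ``martingale'' $(A_n f)_{n\in\cN}$ indexed by the scale $n$, with differences $\Delta_n = A_n - A_{n-1}$; but $\cN$ is an arbitrary finite set of integers, not a dyadic filtration of $\bbR^2$, and no such operator martingale is constructed in the paper nor is one readily available here. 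The GHLR approximation $T^{(1)}$ and the Cotlar inequality of Appendix~\ref{cotlar-appendix} enter only in controlling the exceptional set where $\sup_{n\in\cN}\fS^{(2)} T_n f$ is large, not in building a filtration. You also omit the frequency-shell decomposition $\Phi_0=\sum_l\Phi_{0,l}$ with gain $2^{-|l|}$, which normalizes the multipliers so the hypotheses of Theorem~\ref{SIO-thm} hold with constants summable in $l$.
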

\begin{thm} \label{Tuthm}For $2<p<\infty$,
	\Be \big\|\sup_{u>0} |T_\pm^u f| \big\|_p
	 \lc  \|f\|_p.
	\Ee
\end{thm}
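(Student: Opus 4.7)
The strategy is to combine the scaling invariance of the $T^u_\pm$, a Sobolev embedding in the parameter $u$, and the local smoothing inequality of Mockenhaupt--Seeger--Sogge \cite{MSS93} for Fourier integral operators. The homogeneity of $\Psi_\pm$ and the nondegeneracy $|\partial_{\xi_1}^2\Psi_\pm(\xi_1,\pm 1)|\gtrsim 1$ noted after Lemma \ref{symbollemma} supply the cinematic curvature required by MSS, and the approach follows Marletta--Ricci \cite{MR}.

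First I would observe the exact scaling identity $T^{2^\ell v}_\pm = \delta^{(2)}_{2^{-\ell}} T^v_\pm \delta^{(2)}_{2^\ell}$ for $v > 0$ and $\ell \in \mathbb{Z}$, which follows directly from the invariance $m^u(\xi_1, \xi_2) = m^1(\xi_1, u\xi_2)$ of the multiplier $m^u$ of $T^u_\pm$. Then Littlewood--Paley decompose $f = \sum_{k,m} f_{k,m}$ at frequencies $|\xi_1|\sim 2^k$, $|\xi_2|\sim 2^m$. The cone condition in the support of $\omega_\pm$ restricts the $j$-sum in $m^u$ acting on $f_{k,m}$ to the single term $j=k$ (up to $O(1)$ terms), and further forces $T^u_\pm f_{k,m}$ to be non-negligible only on $u \sim 2^{kb-m}$. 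The scaling identity with $\ell = kb - m$ reduces $\sup_{u>0}|T^u_\pm f_{k,m}|$ to a supremum over $v \in [1,2]$ of an $L^p$-isometrically rescaled operator.

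The local-in-$v$ estimate is produced via the pointwise Sobolev embedding
\[
\sup_{v \in [1,2]}|F(v)|^2 \lesssim \int_1^2|F|^2 dv + \Big(\int_1^2|F|^2 dv\Big)^{1/2}\Big(\int_1^2|F'|^2 dv\Big)^{1/2},
\]
taking $L^p$ norms and applying H\"older to reduce to the two space-time square function bounds for $T^v_\pm g$ and $\partial_v T^v_\pm g$. Under the anisotropic rescaling $(x_1, x_2)\mapsto(2^k x_1, 2^{kb}x_2)$, the scale-$k$ piece of $T^v_\pm$ becomes a fixed-scale Fourier integral operator $\mathcal{T}^v$ with phase $\Psi_\pm(\eta_1, v\eta_2)$, amplitude of order $-1/2$, and frequency support in a bounded conic region. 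The MSS theorem (applicable thanks to $|\partial_{\eta_1}^2\Psi_\pm(\eta_1, \pm 1)|\gtrsim 1$) yields
\[
\Big\|\Big(\int_1^2|\mathcal{T}^v G|^2 dv\Big)^{1/2}\Big\|_p \lesssim 2^{-\alpha(p)k}\|G\|_p \qquad\text{for all } p > 2,
\]
with some $\alpha(p) > 0$, and an analogous bound for $\partial_v \mathcal{T}^v$ with $2^{(1/2 - \alpha(p))k}$. Substituting into the Sobolev bound gives $\|\sup_u|T^u_\pm f_{k,m}|\|_p \lesssim 2^{-\epsilon k}\|f_{k,m}\|_p$ for some $\epsilon(p) > 0$. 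To sum: at fixed $u$ only one $m$ contributes to $T^u_\pm f_k$ for each $k$, hence by Littlewood--Paley for $p \geq 2$, $\|\sup_u|T^u_\pm f_k|\|_p^p \le \sum_m\|\sup_u|T^u_\pm f_{k,m}|\|_p^p \lesssim 2^{-\epsilon kp}\|f_k\|_p^p$; then $\sum_{k \geq 0} 2^{-\epsilon k}\|f_k\|_p \lesssim \|f\|_p$ by H\"older and Littlewood--Paley. The low-frequency contribution ($k \leq 0$) is controlled by standard arguments, e.g., the Hardy--Littlewood maximal function.

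\textbf{Main obstacle.} The main technical point is producing the MSS local smoothing estimate for $\mathcal{T}^v$ with uniform gain $\alpha(p) > 0$ across all $p > 2$. The phase $\Psi_\pm(\eta_1, v\eta_2)$ is nonstandard, but its homogeneity and the curvature $\partial_{\eta_1}^2\Psi_\pm \ne 0$ place it within the MSS framework; extracting a uniformly positive gain throughout the range $p > 2$ may require interpolation with trivial bounds, as in \cite{MR}.
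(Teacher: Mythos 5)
Your high-level strategy (scaling invariance, a Sobolev embedding in the dilation parameter, the MSS local smoothing estimate, and Littlewood--Paley at the end) is indeed the approach the paper takes in \S\ref{Tuproofsect}. But there is a genuine gap in the frequency bookkeeping that makes the reduction incorrect as stated. You claim that if $\widehat{f_{k,m}}$ is supported where $|\xi_1|\sim 2^k$, $|\xi_2|\sim 2^m$, then the cone condition forces the $j$-sum to a single term $j=k$ and makes $T^u_\pm f_{k,m}$ non-negligible only for $u\sim 2^{kb-m}$. That is not the case. The sector $\mathrm{Sect}_\pm$ is unbounded in $|\xi|$: it consists of $\{|\xi|>50\}$ intersected with a conic condition, and $\omega_\pm$ is merely a symbol of order $-1/2$, not compactly supported. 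Consequently, for $\widehat{\kappa_{j,\pm}}(\xi_1, u\xi_2)$ to be nonzero on the support of $\widehat{f_{k,m}}$ one needs $(2^{-j}\xi_1, 2^{-jb}u\xi_2)\in\mathrm{Sect}_\pm$, which pins down $u\approx 2^{k-m+j(b-1)}$ for a \emph{given} $j$, and requires only $j\le k$ (so that $|2^{-j}\xi_1|\gtrsim 1$). As $j$ ranges over the half-line $j\le k$, $u$ sweeps the entire half-line $u\lesssim 2^{kb-m}$, not a single dyadic interval. Thus $\sup_{u>0}|T^u_\pm f_{k,m}|$ involves infinitely many $u$-scales, and your rescaling with the single value $\ell=kb-m$ does not reduce the supremum to $v\in[1,2]$. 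The same issue poisons the summation step: for fixed $u$ and $k$, the $\xi_2$-scale of the $j$-th piece is $\approx u^{-1}2^{j(b-1)+k}$, which depends on $j$, so in fact infinitely many $m$'s contribute at a fixed $u$.

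The paper's device that handles exactly this point is the additional annular decomposition $\kappa_{0,+}=\sum_{\ell\ge 0}\kappa_{0,\ell}$ where $\widehat{\kappa_{0,\ell}}(\xi)=\chi_+(2^{-\ell}|\xi|)\omega_+(\xi)e^{i\Psi_+(\xi)}$. After this decomposition, $\widehat{\kappa_{j,\ell}}(\xi_1,u\xi_2)$ is supported on $|\xi_1|\approx 2^{j+\ell}$, $u|\xi_2|\approx 2^{jb+\ell}$, so for fixed $(k,m,\ell)$ there is exactly one $(j,u)$-scale, and the supremum over $u$ for the $\ell$-th piece \emph{is} essentially a supremum over a single dyadic range. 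The symbol estimate then gives a factor $2^{-\ell/2}$, the MSS local smoothing estimate (applied at frequency scale $R\approx 2^\ell$, which is the relevant frequency parameter, not $2^k$) gives an additional $2^{-\ell\varepsilon(p)}$, and this decay allows the sum over $\ell$. Note that your claimed gain $2^{-\alpha(p)k}$ has no source: the $\ell=O(1)$ contribution (i.e.\ $u\approx 2^{kb-m}$, $j\approx k$) is evaluated at $|\xi|\approx 1$ after rescaling, so neither the symbol decay nor MSS supplies any gain in $k$ there. The correct decay parameter is $\ell=k-j$, not $k$. Without the annular decomposition in $\ell$ or an equivalent mechanism, the argument does not close.
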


\section{Auxiliary results}\label{toolssect}

\subsection{\it  The Chang-Wilson-Wolff inequality}

We consider the conditional expectation operators $\bbE_j$ generated by dyadic cubes  of length $2^{-j}$, i.e. intervals of the form $\prod_{i=1}^d[n_i2^{-j}, (n_i+1)2^{-j})$ with $n\in \bbZ^d$.
Let $f\in L^1_\loc(\bbR^d)$.
For each $j \in \N \cup \{0\}$, $\bbE_j$ is given by
$$
\bbE_j f(x) = \frac{1}{2^{-jd}} \int_{I_j(x)} f(y) dy
$$
where $I_j(x)$ is the unique dyadic cube  of side length $2^{-j}$
that contains $x$. Let $$\bbD_j = \bbE_{j+1} - \bbE_j$$ be the martingale difference operator. Let $\fS f$ be the dyadic square function, defined by
$$
\fS f(x) = \Big( \sum_{j\in\Z} |\bbD_j f(x)|^2 \Big)^{1/2}.
$$
Also let $\mathcal{M}$ be the dyadic maximal function, given by $$\mathcal{M} f(x) = \sup_{j\in\Z} |\bbE_j f(x)|.$$
The following is a slight variant of an inequality due to Chang, Wilson and Wolff \cite{CWW85}:
\begin{prop} \label{prop:CWW85} 
	Suppose 
	that
	$f\in L^p(\bbR^d)\cap L^\infty(\bbR^d)$ for some $p<\infty$.
	Then
	there exist two universal constants $c_1$ and $c_2$ %(independent of $p$) 
	such that 
	\Be\label{CWWineq}\begin{aligned} \meas\Big(
		&\Big \{x \in \R^d \colon |f(x)| > 4 \lambda \text{ and } \fS f(x) \leq \varepsilon \lambda \Big \} \Big)
		\\
		&\qquad
		\leq c_2 \exp(- c_1\eps^{-2})  \meas\Big(\Big\{x \in \R^d \colon \mathcal{M} f(x) > \lambda \Big \} \Big)
	\end{aligned}\Ee
	for all $\lambda > 0$ and $0 < \varepsilon < 1/2$.
\end{prop}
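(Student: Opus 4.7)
The plan is a good-$\lam$ reduction: decompose the level set $\{\cM f>\lam\}$ by the Calder\'on--Zygmund first-exit construction and, on each resulting cube, reduce the claim to a sub-Gaussian tail estimate for a mean-zero dyadic martingale whose square function is pointwise controlled on a subset. Since $\bbE_jf\to f$ a.e.\ as $j\to+\infty$ and $\bbE_jf\to 0$ a.e.\ as $j\to-\infty$ (both valid for $f\in L^p$, $p<\infty$), $|f|\le \cM f$ a.e., hence $\{|f|>4\lam\}\seq\{\cM f>\lam\}$ up to a null set. For $x\in\{\cM f>\lam\}$ set $\tau(x):=\min\{j\in\bbZ\colon |\bbE_jf(x)|>\lam\}$; the level sets $\{\tau=j_0\}$ partition $\{\cM f>\lam\}$ into disjoint dyadic cubes $Q_k$ of scale $j_k$ on which $|\bbE_{j_k}f|>\lam$ while $|\bbE_{j_k-1}f|\le\lam$ (the latter holds on the parent, hence on $Q_k$, by minimality of $\tau$).

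On a fixed $Q_k$ split $f=\bbE_{j_k}f+g_k$ with $g_k(x):=\sum_{j\ge j_k}\bbD_jf(x)$, a dyadic martingale on $Q_k$ starting at zero; its localized square function
\[\fS_{Q_k}g_k(x):=\Bigl(\sum_{j\ge j_k}|\bbD_jf(x)|^2\Bigr)^{1/2}\]
satisfies $\fS_{Q_k}g_k\le \fS f$ pointwise on $Q_k$. Since $\bbD_{j_k-1}f$ is constant on $Q_k$, for $x\in Q_k$
\[|\bbE_{j_k}f(x)|\le |\bbE_{j_k-1}f(x)|+|\bbD_{j_k-1}f(x)|\le \lam+\fS f(x),\]
so on $\{\fS f\le \eps\lam\}\cap Q_k$ with $\eps<1/2$ one has $|\bbE_{j_k}f|\le 2\lam$, and thus $|f|>4\lam$ forces $|g_k|>2\lam$. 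Summing over the disjoint $Q_k$ reduces the proposition to the uniform bound
\[\meas\bigl(\{x\in Q_k\colon |g_k(x)|>2\lam,\ \fS_{Q_k}g_k(x)\le \eps\lam\}\bigr)\le c_2\,e^{-c_1/\eps^2}\,|Q_k|.\]

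The core of the proof is this last sub-Gaussian estimate. I would establish it by a stopping-time truncation: let $\sigma(x):=\inf\bigl\{J\ge j_k\colon \sum_{j=j_k}^{J}|\bbD_jf(x)|^2>(\eps\lam)^2\bigr\}$ and let $\tilde g_k$ denote $g_k$ stopped at $\sigma$. Then $\tilde g_k=g_k$ on $\{\fS_{Q_k}g_k\le \eps\lam\}$, while $\|\fS_{Q_k}\tilde g_k\|_{L^\infty(Q_k)}\lesim \eps\lam$ (the final increment at the stop time can at worst roughly double the previously accumulated square-sum bound). The classical Azuma--Hoeffding-type exponential moment inequality for dyadic martingales with $L^\infty$-bounded square function --- proved by iterating the layer bound $\bbE_{j-1}[\exp(\alpha\,\bbD_j\tilde g_k/(\eps\lam))]\le \exp(C\alpha^2|\bbD_j\tilde g_k|^2/(\eps\lam)^2)$ via $\bbE_{j-1}\bbD_j\tilde g_k=0$ and $|\bbD_j\tilde g_k|\lesim \eps\lam$ --- then yields, for a small universal $\alpha>0$,
\[\frac{1}{|Q_k|}\int_{Q_k}\exp\!\bigl(\alpha\,\tilde g_k(x)^2/(\eps\lam)^2\bigr)\,dx\le C,\]
and Chebyshev's inequality delivers the desired tail with prefactor $e^{-c_1/\eps^2}$.

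The main technical step is the exponential moment bound for the stopped martingale; the rest of the argument is a routine assembly via the first-exit decomposition. The crucial bridge is the pointwise inequality $|\bbD_{j_k-1}f|\le \fS f$, which is what converts the hypothesis $\fS f\le \eps\lam$ into the control $|\bbE_{j_k}f|\le 2\lam$ needed to upgrade $|f|>4\lam$ on $Q_k$ to $|g_k|>2\lam$ for the mean-zero martingale $g_k$.
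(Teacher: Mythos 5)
Your overall architecture matches the paper's: a Calder\'on--Zygmund-type decomposition to reduce to cubes on which the average is comparable to $\lambda$, followed by a local sub-Gaussian tail estimate for the resulting mean-zero dyadic martingale. The first-entrance decomposition via $\tau(x)=\min\{j:|\bbE_jf(x)|>\lambda\}$ is a clean way to set up the reduction (the paper instead decomposes $f-\bbE_0 f$ on unit cubes and then scales/limits), and the pointwise passage from $|f|>4\lambda$ and $\fS f\le\eps\lambda$ to $|g_k|>2\lambda$ on $Q_k$ via $|\bbE_{j_k}f|\le|\bbE_{j_k-1}f|+|\bbD_{j_k-1}f|\le(1+\eps)\lambda$ is correct. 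The gap is in the local sub-Gaussian estimate.

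Specifically, the claim that $\|\fS_{Q_k}\tilde g_k\|_{L^\infty(Q_k)}\lesssim\eps\lambda$ for the martingale stopped at $\sigma(x)=\inf\{J:\sum_{j=j_k}^{J}|\bbD_jf(x)|^2>(\eps\lambda)^2\}$ is \emph{false}: the quadratic variation $\sum_j|\bbD_jf|^2$ is not predictable, so the first exceeding increment $|\bbD_{\sigma(x)}f(x)|$ is not controlled by the previously accumulated sum --- a single large jump can dominate. (Stopping \emph{before} the exceeding increment would give the $L^\infty$ bound, but that stopping rule is not a stopping time for the filtration $\{\mathcal F_J\}$ unless $|\bbD_jf|$ is $\mathcal F_j$-measurable, which holds only for binary martingales, $d=1$, and not for the $2^d$-adic filtration in general $d$.) Consequently the Azuma--Hoeffding layer bound you invoke, which relies on $|\bbD_j\tilde g_k|\lesssim\eps\lambda$, cannot be iterated, and the exponential-moment estimate $\frac{1}{|Q_k|}\int_{Q_k}e^{\alpha\tilde g_k^2/(\eps\lambda)^2}\le C$ is not established.

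The paper avoids this entirely by never introducing a stopping time. It works with the \emph{normalized} exponential process: the ratio $e^{t(\bbE_mf-\bbE_nf)}/\prod_{j=n}^{m-1}\bbE_j(e^{t\bbD_jf})$ is an exact martingale that integrates to $|I_n|$, and one then bounds $\int_{I_{n,a}}e^{t(\bbE_mf-\bbE_nf)}$ by pulling $\big\|\prod_j\bbE_j(e^{t\bbD_jf})\big\|_{L^\infty(I_{n,a})}\le e^{t^2a^2/2}$ out of the integral --- the key is that the normalizing product is bounded in terms of $\fS f$ and hence by $e^{t^2a^2/2}$ \emph{only on the restricted domain} $I_{n,a}=\{\fS f<a\}$, which is exactly the set one integrates over. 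This sidesteps the unpredictability of the quadratic variation and is the step you would need to replace your stopping-time truncation with.
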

This is a scaling invariant version of the Chang-Wilson-Wolff inequality. For a detailed proof see Appendix B.
%we refer to the arXiv version of our paper (arXiv:1902.00096, Appendix B).

We shall apply the one-dimensional version of this theorem for the vertical slices in $\bbR^2$.
Let $f$ be a measurable function in $L^p(\bbR^2)\cap L^\infty(\bbR^2)$, and for $j \geq 0$, let $\bbE_j^{(2)}$ be the  conditional expectation operator  acting on the second variable, i.e.
$$
\bbE_j^{(2)} \!f(x) = \frac{1}{2^{-j}} \int_{I_j(x_2)} f(x_1,y) dy
$$
where $I_j(x_2)$ is the unique dyadic interval of length $2^{-j}$ that contains $x_2$. Let $\bbD_j^{(2)} = \bbE_{j+1}^{(2)} - \bbE_j^{(2)}$, and 
$$
\fS^{(2)}\! f(x) = \Big( \sum_{j\in\Z} |\bbD_j^{(2)} f(x)|^2 \Big)^{1/2}.
$$
Then from the above proposition, we clearly have
\beq \label{eq:CWW85}
\begin{split}
	&\meas\Big( \Big\{x \in \R^2 \colon |f(x)| > 4 \lambda \text{ and } \fS^{(2)} \!f(x) \leq \varepsilon \lambda \Big\} \Big)
	\\
	&\qquad\qquad\qquad\leq  \, c_2 e^{-c_1\eps^{-2}} \meas\Big(\Big \{x \in \R^2 \colon \mathcal{M}^{(2)} \!f(x) > \lambda \Big \} \Big)
\end{split}
\endeq
for all $\lambda > 0$ and $0 < \varepsilon < \tfrac12$, where $\mathcal{M}^{(2)}$ is the dyadic maximal function in the second variable, i.e. $\mathcal{M}^{(2)}\! f(x) = \sup_{j\in\Z} |\bbE^{(2)}_j \!f (x)|.$

\subsection{\it Martingale difference operators and Littlewood-Paley projections}
We need some  computations from \cite{GHS06} which are  summarized in the following lemma.
Let $M$ denote the Hardy-Littlewood maximal operator acting on functions in $L^p(\bbR)$.
Let $\phi$ be supported in $( c^{-1},c)\cup (-c,-c^{-1})$ for some $c>1$.

\begin{lem}\label{gss-sublemma} Assume that $f\in L^1+L^\infty(\bbR)$.
	Then
	
	(i) For $q\ge 1$, $n\ge 0$,
	$$\bbE_k (\cF^{-1}[\phi(2^{-k-n}\cdot) \widehat f])(x) \lc 2^{-n(1-\frac 1q) } \big(M(|f|(x)^q)\Big)^{1/q}$$
	
	(ii)  For $n\ge 0$
	$$\bbD_k (\cF^{-1} [\phi(2^{-k+n}\cdot) \widehat f])(x) \lc 2^{-n } Mf(x)$$
	almost everywhere.
\end{lem}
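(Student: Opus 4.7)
Let $\psi := \cF^{-1}\phi$. Since $\phi \in C_c^\infty$ vanishes in a neighborhood of $0$, $\psi$ is a Schwartz function with all moments $\int y^\alpha \psi(y)\,dy = 0$. Setting $\psi_m(y) := 2^m \psi(2^m y)$, both operators in the lemma have the form $A_k(f * \psi_m)$ for $A_k \in \{\bbE_k, \bbD_k\}$. In each part the plan is to estimate the pointwise kernel of the composition and reduce to the Hardy--Littlewood maximal function.

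\textbf{Part (i), $m = k+n$.} The composition kernel is
\[
K(x,z) := \bbE_k[\psi_{k+n}(\cdot - z)](x) = 2^k\!\int_{I_k(x)}\!\psi_{k+n}(y-z)\,dy = 2^k \int_J \psi(u)\,du
\]
after the substitution $u = 2^{k+n}(y-z)$, where $J = 2^{k+n}(I_k(x) - z)$ is an interval of length $2^n$. Splitting into the cases $0 \notin J$ (Schwartz decay of $\psi$ on $J$) and $0 \in J$ (replace $\int_J \psi = -\int_{J^c}\psi$ and use Schwartz decay on $J^c$) yields the uniform pointwise bound
\[
|K(x,z)| \lesssim_N 2^k \bigl(1 + 2^{k+n}\operatorname{dist}(z, \partial I_k(x))\bigr)^{-N}
\]
for every $N$. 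From this, $\int |K(x,z)|\,dz \lesssim 2^{-n}$, while a dyadic decomposition of $\R$ into annuli about the two points of $\partial I_k(x)$, combined with the trivial ball-average bound for $M(|f|^q)$, yields $\int |f|^q |K|\,dz \lesssim M(|f|^q)(x)$. H\"older's inequality with exponents $q$ and $q'$ then gives
\[
|\bbE_k(f*\psi_{k+n})(x)| \le \Bigl(\!\int |f|^q |K|\Bigr)^{\!1/q}\Bigl(\!\int |K|\Bigr)^{\!1/q'} \lesssim 2^{-n(1-1/q)} M(|f|^q)(x)^{1/q}.
\]

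\textbf{Part (ii), $m = k-n$.} Now $f * \psi_{k-n}$ varies at scale $2^{n-k}$, much coarser than the scale $2^{-k}$ probed by $\bbD_k$. Using the identity $\bbD_k g(x) = 2^k\bigl(\int_{I'}g - \int_{I''}g\bigr)$ where $I'$, $I''$ are the two dyadic halves of $I_k(x)$, followed by two applications of the mean value theorem, one obtains
\[
|\bbD_k[\psi_{k-n}(\cdot - z)](x)| \lesssim 2^{-k}\!\!\sup_{y \in I_k(x)}\! |\psi_{k-n}'(y - z)| \lesssim 2^{-n}\cdot 2^{k-n}(1 + 2^{k-n}|x - z|)^{-N},
\]
where I used $\psi_{k-n}'(y) = 2^{2(k-n)}\psi'(2^{k-n} y)$ and absorbed $|y - x| \le 2^{-k} \le 2^{n-k}$ into the Schwartz decay. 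The right-hand side is $2^{-n}$ times an $L^1$-normalized Schwartz bump at scale $2^{n-k}$, whose convolution with $|f|$ is pointwise dominated by $Mf(x)$. This gives the claim.

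\textbf{Main difficulty.} The crux of the argument is the pointwise kernel estimate in part~(i): one must identify $\operatorname{dist}(z, \partial I_k(x))$ (rather than $|x - z|$) as the decisive geometric parameter, and merge the two subcases $0 \in J$ and $0 \notin J$ into a single decay rate. Once this is done, the mass estimate $\int |K(x,\cdot)| \lesssim 2^{-n}$ and the interpolation-type exponent $1 - 1/q$ emerge automatically from H\"older's inequality. Part~(ii) is then a straightforward Taylor/MVT computation, enabled by the reversed scale relation $m < k$ between the convolution kernel and the martingale difference.
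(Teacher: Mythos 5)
Your proof is correct, and I want to flag that the paper itself does not spell out the argument: it simply refers to Sublemma~4.2 of \cite{GHS06}. Your blind reconstruction is essentially the same kernel-estimate argument used there, so this is a case of ``same approach'' rather than a genuinely different route.

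Two small remarks on presentation. In part (i), after obtaining the pointwise bound $|K(x,z)| \lesssim_N 2^k\bigl(1+2^{k+n}\operatorname{dist}(z,\partial I_k(x))\bigr)^{-N}$, the passage to $\int |f|^q|K|\,dz \lesssim M(|f|^q)(x)$ deserves one more sentence: the kernel concentrates at scale $2^{-k-n}$ around the endpoints $p_1,p_2$ of $I_k(x)$, not around $x$ itself, and $M(|f|^q)(p_i)$ is not controlled by $M(|f|^q)(x)$. The fix is to lump all annuli of radius $\lesssim 2^{-k}$ about $p_i$ into the single ball $B(x, C2^{-k})$ (using $|x-p_i|\le 2^{-k}$ and the crude bound $|K|\le 2^k$ there), and for the larger annuli to note $\operatorname{dist}(z,\partial I_k(x))\approx|z-x|$. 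With that dichotomy the dyadic sum converges and gives the stated bound without loss. In part (ii), the phrase ``two applications of the mean value theorem'' is a slight overcount --- one MVT applied to $g(y) - g(y+2^{-k-1})$ on the half-interval, or equivalently the usual estimate $|\bbE_j g(x) - g(x)| \le 2^{-j}\sup_{I_j(x)}|g'|$ for $j=k,k+1$, suffices. The crucial observation, which you correctly use, is that $2^{k-n}\cdot 2^{-k} = 2^{-n} \le 1$, so replacing $y\in I_k(x)$ by $x$ inside $(1+2^{k-n}|y-z|)^{-N}$ only costs a constant; this is exactly where the hypothesis $n\ge 0$ enters.
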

\begin{proof}[Proof of Lemma \ref{gss-sublemma}] Cf. Sublemma 4.2 in \cite{GHS06}. \end{proof}
Given a function on $\bbR^2$ we shall apply this lemma to $y_2\mapsto f(y_1,y_2)$ and relate the 
square function $\fS^{(2)}$ to Littlewood-Paley square functions in the second variable.

Let $\chi_b$ be an even $C^\infty$ function supported in $(2^{-b}, 2^b)\cup(-2^b, -2^{-b})$  such that
$\sum_{k\in \bbZ} \chi_b(2^{-kb} t)=1$ for all $t\neq 0$.
Define the Littlewood-Paley projection type operators $P^{(1)}_k$,
$P^{(2)}_{k,b}$ acting on Schwartz functions on $\bbR^2$ by
\begin{align}\label{projectionops}
\widehat{P^{(1)}_k\! f} (\xi)&= \chi_1(2^{-k}\xi_1) \widehat f(\xi)
\\ \label{projectionopsb}
\widehat{P^{(2)}_{k,b} f} (\xi)&= \chi_b(2^{-kb}\xi_2) \widehat f(\xi)
\end{align}

\begin{lem}  \label{martvsLP-lem}
	Let $q>1$, $b>0$,  and let $g\in L^1+L^\infty$. Then  the pointwise inequality 
	$$\fS^{(2)}\! g \le C_{b,q} 
	\Big(\sum_{k\in \bbZ} \big[
	M^{(2)}( |P^{(2)}_{k,b} g|^q) ]^{2/q} \Big)^{1/2}
	$$
	holds
	almost everywhere. Here $M^{(2)}$ denotes the Hardy-Littlewood maximal operator in the second variable. 
\end{lem}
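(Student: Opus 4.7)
The plan is to reduce to one-dimensional analysis in the $x_2$ variable (with $x_1$ held fixed) and combine the Littlewood-Paley decomposition $g = \sum_{k\in\bbZ}P^{(2)}_{k,b}g$ in the second variable with the pointwise bounds of Lemma~\ref{gss-sublemma}. By linearity, $\bbD^{(2)}_j g = \sum_k \bbD^{(2)}_j P^{(2)}_{k,b} g$, so it suffices to produce an estimate for each summand that decays rapidly in $|j-kb|$ and features $M^{(2)}|P^{(2)}_{k,b}g|^q$ on the right-hand side.

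To invoke Lemma~\ref{gss-sublemma} with the desired form of the right-hand side, I would fix an auxiliary even bump $\wt\chi_b\in C^\infty_c$ supported in $(2^{-b-1},2^{b+1})\cup(-2^{b+1},-2^{-b-1})$ with $\wt\chi_b\equiv 1$ on $\supp\chi_b$, so that $P^{(2)}_{k,b}g = \cF^{-1}[\wt\chi_b(2^{-kb}\cdot)\widehat{P^{(2)}_{k,b}g}]$. Then Lemma~\ref{gss-sublemma} applies with $f=P^{(2)}_{k,b}g$ and $\phi=\wt\chi_b$, producing bounds in terms of $(M^{(2)}|P^{(2)}_{k,b}g|^q)^{1/q}$ rather than in terms of $M^{(2)}|g|^q$. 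In the high-frequency regime $kb\ge j+1$ I would apply part~(i) to both $\bbE^{(2)}_{j+1}$ and $\bbE^{(2)}_j$ (with $n=kb-j-1$ and $n=kb-j$ respectively), and in the low-frequency regime $kb\le j$ I would apply part~(ii) (with $n=j-kb$); the intermediate case $j<kb<j+1$ (which occurs for at most one $k$ per $j$) is handled by the trivial bound $|\bbD^{(2)}_j h|\lesssim M^{(2)}h$. With $\tau:=\min(1-1/q,1)>0$, these cases combine to yield the pointwise estimate
\[
|\bbD^{(2)}_j P^{(2)}_{k,b} g(x)| \lesssim 2^{-|j-kb|\tau}\bigl(M^{(2)}|P^{(2)}_{k,b}g|^q(x)\bigr)^{1/q}.
\]

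To finish, set $a_k(x)=(M^{(2)}|P^{(2)}_{k,b}g|^q(x))^{1/q}$ and $\alpha_{j,k}=2^{-|j-kb|\tau}$; since $b>0$ and $\tau>0$, one has $\sup_j\sum_k\alpha_{j,k}+\sup_k\sum_j\alpha_{j,k}\lesssim 1$. A weighted Cauchy-Schwarz then gives
\[
|\bbD^{(2)}_j g(x)|^2 \le \Bigl(\sum_k\alpha_{j,k}\Bigr)\sum_k\alpha_{j,k}\,a_k(x)^2 \lesssim \sum_k\alpha_{j,k}\,a_k(x)^2,
\]
and summing in $j$ followed by taking a square root proves the claim. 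The step requiring the most care is the first: it is the enlarged-bump trick that turns the $M|f|^q$ on the right-hand side of Lemma~\ref{gss-sublemma} into the $M^{(2)}|P^{(2)}_{k,b}g|^q$ demanded by the lemma being proved. The rest is standard bookkeeping exploiting exponential decay away from the critical scale $kb\approx j$.
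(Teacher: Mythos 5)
Your proof is correct and follows essentially the same route as the paper: decompose $\bbD^{(2)}_j$ through the Littlewood--Paley projections $P^{(2)}_{k,b}$ using an enlarged bump $\wt\chi_b$ so that $P^{(2)}_{k,b}g$ can be reproduced and fed into Lemma~\ref{gss-sublemma}, obtain exponential decay in $|j-kb|$ from parts (i) and (ii) of that lemma (with $Mh\le (M|h|^q)^{1/q}$ absorbing the low-frequency case), and then resum. The only cosmetic difference is the bookkeeping at the end: you use a weighted Cauchy--Schwarz (Schur test) on the double sum, whereas the paper groups terms by the integer offset $n$ with $n\le j-kb<n+1$ and applies Minkowski's inequality before summing the geometric tail -- these are functionally identical.
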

\begin{proof}[Proof of Lemma \ref{martvsLP-lem}.] Let $\phi_b$ be a $C^\infty$ function with 
\[ \supp(\phi_b) \subset (2^{-b}, 2^b)\cup(-2^b, -2^{-b})\]  which equals $1$ on the support of $\chi_b$. Define 
	$\widehat{\tilde P^{(2)}_{k,b} f} 
	(\xi)= \phi_b(2^{-kb}\xi_2) \widehat f(\xi).$
	We write $$\bbD_k^{(2)}= \sum_{n\in \bbZ} 
	\sum_{\substack{l\in \bbZ:\\n\le k-lb<n+1}} 
	\bbD_k^{(2)} \tilde P^{(2)}_{l,b}  P^{(2)}_{l,b} $$ and use Minkowski's inequality and Lemma 
	\ref{gss-sublemma} to estimate, with  $\eps<1-1/q$, 
	\begin{align*}\fS^{(2)} f &\lc  \sum_{n\in \bbZ} 2^{-|n|\eps} \Big(\sum_{k=0}^\infty \Big[
	\sum_{\substack{l\in \bbZ:\\n\le k-lb<n+1}} 
	M^{(2)}(|P^{(2)}_{l,b} f|^q)\Big]^{2/q}\Big)^{1/2}
	\\&\lc
	\Big(\sum_{l\in\bbZ}\big [
	M^{(2)}(|P^{(2)}_{l,b} f\big| ^q)|^{2/q}\Big)^{1/2}. \qedhere
	\end{align*}
	This finishes the proof of Lemma \ref{martvsLP-lem}.
\end{proof}
\subsection{\it A variant of Cotlar's inequality} \label{cotlarsect}
Recall that  $\chi_+\in C^\infty_c(\bbR)$ be supported in  $(1/2,2)$ such that
$\sum_{j=-\infty}^\infty \chi_+(2^j t)=1$ for $t>0$ and let $\eta= \chi_+(|\cdot|)$.

Consider a 
Mikhlin-H\"ormander multiplier $m$ on $\bbR^d$ satisfying the assumption 
\Be\label{HMcond}\sup_{t>0} \|\eta \,m(t\cdot)\|_{\sL^1_{\alpha}} =:B(m)<\infty, \quad \alpha>d;
\Ee
here $\sL^1_{\alpha}$ is the potential space of functions
$g$  with $(I-\Delta)^{\frac{\alpha}{2}}g\in L^1$.
Let $Sf=\cF^{-1}[m\widehat f]$, and for 
$n\in \bbZ$ let
$S_n$ be defined by 
\[\widehat {S_n f}(\xi) = \sum_{j\le n} \eta(2^{-j}\xi) m(\xi)\widehat f(\xi).
\]
Then both $S$ and the $S_n$ are of weak type $(1,1)$ 
and bounded on $L^p$ for $p \in (1,\infty)$ with uniform operator norms $\lc_p B(m)$. We are interested in bounds for the  maximal function 
\Be \label{S*def} S_* f(x)= \sup_{n\in \bbZ} |S_nf(x)|\Ee

\begin{prop}\label{Cotlarprop} Let $\alpha>d$,  $r>0$ and $B(m)$ as in \eqref{HMcond}. For $f\in L^p(\bbR^d)$,
	we have, for almost every $x$, and for $0<\delta\le 1/2$
	\Be\label{Sstarineq}
	S_* f(x) \le 
	\frac{1}{(1-\delta)^{1/r}}\big(M (|Sf|^r)(x)\big)^{1/r} + C_{d,\alpha} \delta^{-1} B(m)
	M f(x).
	\Ee
\end{prop}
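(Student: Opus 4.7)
\textbf{Proof plan for Proposition \ref{Cotlarprop}.} The approach is a variant of Cotlar's inequality. Fix $n\in\bbZ$ and $x\in\bbR^d$, and set $B=B(x,2^{-n})$. The plan is to identify a subset $E\subset B$ with $|E|\ge(1-\delta)|B|$ on which the pointwise comparison
\[
|S_nf(x)-Sf(y)|\le C_{d,\alpha}\,\delta^{-1}\,B(m)\,Mf(x)
\]
holds. Given such an $E$, on $E$ one has $\bigl[|S_nf(x)|-C_{d,\alpha}\delta^{-1}B(m)Mf(x)\bigr]_+\le|Sf(y)|$. Raising to the $r$th power, averaging over $E$, and using $|B|/|E|\le (1-\delta)^{-1}$ gives
\[
\bigl[|S_nf(x)|-C_{d,\alpha}\delta^{-1}B(m)Mf(x)\bigr]_+^r\le (1-\delta)^{-1}M(|Sf|^r)(x).
\]
Taking the $1/r$th power and rearranging yields \eqref{Sstarineq}, and passing to the supremum in $n$ on the left produces the statement of the proposition.

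To construct $E$, decompose
\[
S_nf(x)-Sf(y)=\bigl[S_nf(x)-S_nf(y)\bigr]+(S_n-S)f(y).
\]
For the first bracket, the hypothesis \eqref{HMcond} with $\alpha>d$ gives uniform pointwise kernel and gradient bounds on the dyadic pieces $K_j=\cF^{-1}[\eta(2^{-j}\cdot)m]$, of the form
\[
|K_j(z)|+2^{-j}|\nabla K_j(z)|\lesssim B(m)\,2^{jd}(1+2^j|z|)^{-\alpha}.
\]
Summing the gradient bound over $j\le n$ and integrating against $|f|$ with $|x-y|\le 2^{-n}$ produces the \emph{pointwise} estimate $|S_nf(x)-S_nf(y)|\le CB(m)Mf(x)$ for every $y\in B$, with no $\delta$-loss.

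For the second bracket, split $f=f_{\loc}+f_{\rm far}$ with $f_{\loc}=f\chi_{2B}$. Since $|y-z|\ge|x-z|/2$ whenever $y\in B$ and $z\notin 2B$, summing the kernel bounds on the $K_j$ with $j>n$ over dyadic shells in $|z-x|$ converges because $\alpha>d$, and yields the pointwise bound $|(S-S_n)f_{\rm far}(y)|\lesssim B(m)Mf(x)$. For $f_{\loc}$ one uses that $S-S_n$ is itself a Mikhlin multiplier of uniform weak type $(1,1)$ with constant $\lesssim B(m)$; combined with $\|f_{\loc}\|_1\lesssim 2^{-nd}Mf(x)$ this gives
\[
\bigl|\{y\in B:|(S-S_n)f_{\loc}(y)|>C\delta^{-1}B(m)Mf(x)\}\bigr|\le\delta\,|B|
\]
for a suitable absolute $C$. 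Taking $E$ to be the complement of this bad set within $B$ and combining the three pointwise estimates above produces the comparison assumed in the first paragraph.

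The main obstacle will be extracting the uniform-in-$n$ pointwise decay of the $K_j$ and of their gradients from the $\sL^1_\alpha$ hypothesis, and verifying that the weak-$(1,1)$ constant of $S-S_n$ is uniformly bounded by $CB(m)$ with $C$ independent of $n$. Both are standard consequences of Mikhlin--H\"ormander theory, but care must be taken to keep the dependence on $B(m)$ linear and to avoid losing a factor in $\delta$ at either place, since this is what produces the precise constants $(1-\delta)^{-1/r}$ and $\delta^{-1}B(m)$ appearing in \eqref{Sstarineq}.
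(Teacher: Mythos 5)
Your proof is correct. You arrive at the same Cotlar-type pointwise comparison as the paper, but via a genuinely different decomposition of $S_nf(x)-Sf(y)$, and it is worth noting what each buys.

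The paper first splits $f=g+h$ with $g=f\bbone_{B(\tx,2^{-n})}$, then writes
\[
S_nf(\tx)= S_ng(\tx)+(S_n-S)h(\tx)+\big[Sh(\tx)-Sh(w)\big]+Sf(w)-Sg(w),
\]
so the three auxiliary quantities $S_ng(\tx)$, $(S_n-S)h(\tx)$, $Sh(\tx)-Sh(w)$ are controlled pointwise (Lemma~\ref{cotlarlemmaapp}, exploiting that $h$ vanishes near $\tx$), and the weak-type $(1,1)$ inequality is applied to the \emph{full} singular operator $S$ acting on the local piece $g$. You instead leave $f$ whole in the first bracket and estimate the oscillation $|S_nf(x)-S_nf(y)|\lesssim B(m)Mf(x)$ directly for $y\in B(x,2^{-n})$; this is correct, although the phrase ``summing the gradient bound'' glosses over the fact that $\sum_{j\le n}\|\nabla K_j\|_1\approx 2^n$ diverges, so one must (as you implicitly do) first majorize $\int|\nabla K_j(\cdot-z)||f(z)|\,dz\lesssim 2^jMf(x)$ before multiplying by $|x-y|\le 2^{-n}$ and summing telescopes to a finite bound. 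Your localization is performed only in the tail operator $(S-S_n)$, and the weak-type inequality is applied to $S-S_n$ (uniformly a Mikhlin multiplier with constant $\lesssim B(m)$) acting on $f_{\loc}$, rather than to $S$ acting on $g$. Both routes yield the same exceptional-set measure $\le\delta|B|$ and hence the identical final constants $(1-\delta)^{-1/r}$ and $\delta^{-1}B(m)$; your version avoids the intermediate algebraic rearrangement $S_nf=S_ng+(S_n-S)h+Sh$ at the mild cost of needing the oscillation estimate for $S_n$ on all of $f$ rather than the cleaner Lipschitz estimate for $S$ on $h$ supported away from $\tx$.
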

Proposition \ref{Cotlarprop} is a  variant of the standard Cotlar inequality regarding truncations of 
singular integrals. A proof is included in  Appendix \ref{cotlar-appendix}.

\subsection{\it An $L^p$ space time  estimate for Fourier integral operators of convolution type and vector valued extensions}
\label{lstheoremsect}
Let $S(a_0, a_1)$ be the sectorial region  in $\bbR^2$ 
$$S(a_0,a_1)=\{(\xi_1,\xi_2):  a_0<|\xi_1|/|\xi_2|< a_1 , \,\xi_2>0\}$$
and let $\eta_\sec$ be $C^\infty$ and compactly supported in $S_{\text{ann}}:=
S (a_0,a_1)\cap \{\xi: 1<|\xi|<2\}$.
Let $q\in C^\infty$ be defined in $S(a_0,a_1)$ and  homogeneous of degree one, satisfying
$$q_{\xi\xi}\neq 0 \text{ on } S(a_0,a_1)$$
i.e. the Hessian $q_{\xi\xi}$ has rank one on the sector $S(a_0, a_1)$.
Model cases for $q(\xi)$ are given by $|\xi|$, or $\xi_1^2/\xi_2$ in the sector $\{|\xi_1|\le c|\xi_2|\}$.
Define
$$
F_R f(x,t) = \int e^{i (\inn x\xi+t q(\xi))} \eta_\sec(\xi/R) \widehat f(\xi) d\xi.
$$
We need a so-called local smoothing estimate 
from \cite{MSS93} (the terminology is supposed to indicate that the integration over a compact time interval improves on the fixed time estimate $\|F_R f(\cdot,t)\|_p \lc 
R^{\frac 12-\frac 1p} \|f\|_p$, $2\le p<\infty$).
\begin{thm*}\cite{MSS93}
	If $I$ is a compact interval then
	\Be\label{lsineq} 
	\Big(\int_I \int_{\bbR^2} |F_R f(x,t) |^p dx\, dt\Big)^{1/p} \lc C_I R^{\frac 12-\frac 1p-\eps(p) } \|f\|_p,
	\Ee
	with $\eps(p)>0$ if $2<p<\infty$.
	The estimates are uniform as  $\eta_\sec$ ranges over a bounded subset of $C^\infty$ functions supported in $S_{\text{\rm ann}}$.
\end{thm*}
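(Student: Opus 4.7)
The plan is to follow the Mockenhaupt--Seeger--Sogge argument, whose hypotheses are exactly the homogeneity of $q$ and the rank--one condition on $q_{\xi\xi}$ imposed here; no further reduction of $q$ to a canonical form is needed, as the subsequent analysis depends only on the \emph{cinematic curvature} these assumptions provide. The strategy is to first establish the estimate at a single exponent $p_0=4$ with some positive gain $\eps_0$, and then interpolate with the trivial $L^2$ and $L^\infty$ bounds to obtain $\eps(p)>0$ throughout $(2,\infty)$.

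The heart of the matter is thus the $L^4$ estimate. I would dyadically decompose the frequency annulus $\{R<|\xi|<2R\}$ into $\simeq R^{1/2}$ angular sectors $\Theta_\nu$ of aperture $R^{-1/2}$ and write $F_R=\sum_\nu F_R^\nu$, where $F_R^\nu$ localizes the symbol to $\Theta_\nu$. The rank--one curvature in the direction tangent to the sphere means that, for $t\in I$, the kernel of $F_R^\nu$ is essentially concentrated on a plate of dimensions $1\times R^{-1/2}$ aligned with the normal direction $\omega_\nu$ to the characteristic cone $\{\tau=q(\xi)\}$ over $\Theta_\nu$. The central step is the cone--type square function bound
\[
\|F_R f\|_{L^4(\bbR^2\times I)}\,\lc\, R^{1/4-\eps_0}\Big\|\Big(\sum_\nu |F_R^\nu f|^2\Big)^{1/2}\Big\|_{L^4(\bbR^2\times I)},
\]
proved by $TT^*$: expanding the fourth power reduces matters to incidence/overlap bounds for pairs of the plates described above, and these Kakeya--type estimates are where the curvature is used and where the gain $\eps_0>0$ originates. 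Since the $\Theta_\nu$ are disjoint in frequency, Plancherel then dominates the right hand side by $\|f\|_{L^4}$ up to acceptable losses, completing the estimate at $p_0=4$.

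The hard part is exactly this cone square function/Kakeya incidence estimate: it is where the two--dimensional geometry and the non--degenerate tangential curvature of the characteristic variety enter crucially, and it is the technical core of \cite{MSS93}. Granted this estimate at $p_0=4$, the trivial Plancherel bound $\|F_Rf\|_{L^2(\bbR^2\times I)}\lc|I|^{1/2}\|f\|_2$ matches $R^{1/2-1/p}$ at $p=2$ with no gain, and real interpolation with the $p_0=4$ bound yields $\eps(p)=(2-4/p)\eps_0>0$ for $2<p<4$. For $p>4$, interpolating $p_0=4$ with the trivial $L^\infty$ bound $\|F_Rf\|_\infty\lc R^{1/2}\|f\|_\infty$ (obtained by estimating the $L^1$ norm of the Schwartz kernel via stationary phase) yields $\eps(p)=4\eps_0/p>0$. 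The uniformity in $\eta_\sec$ over a bounded $C^\infty$ family is automatic throughout, since every step uses only $C^N$ control of the amplitude for some fixed $N$.
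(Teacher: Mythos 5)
This result is cited in the paper from \cite{MSS93}; the paper does not reprove it, so there is no internal argument to compare against. Your sketch matches the high-level architecture of the cited proof in the homogeneous (convolution) case: an angular decomposition of the annulus into $\simeq R^{1/2}$ sectors of aperture $R^{-1/2}$, an $L^4$ estimate with a positive gain obtained from a square function estimate together with plate-incidence/Kakeya input, and then interpolation with the trivial $L^2$ (Plancherel) and $L^\infty$ (stationary-phase kernel) endpoints. Your interpolation arithmetic, $\eps(p)=(2-4/p)\eps_0$ for $2<p<4$ and $\eps(p)=(4/p)\eps_0$ for $p>4$, is correct, as is the kernel bound giving $\|F_Rf\|_\infty\lc R^{1/2}\|f\|_\infty$.

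One step is, however, misjustified as written. You assert that, because the $\Theta_\nu$ are disjoint in frequency, ``Plancherel'' dominates $\|(\sum_\nu|F_R^\nu f|^2)^{1/2}\|_{L^4(\bbR^2\times I)}$ by $\|f\|_{L^4}$ up to acceptable losses. Plancherel gives only the $L^2$ case. At $L^4$, the bound $\|(\sum_\nu|F_R^\nu f|^2)^{1/2}\|_{L^4(\bbR^2\times I)}\lc R^{\eps}\|f\|_4$ is a genuinely nontrivial square function statement: one needs a Rubio de Francia/C\'ordoba-type inequality for the sector projections together with a vector-valued multiplier bound for the sectorially localized phases $e^{itq(\xi)}\bbone_{\Theta_\nu}(\xi)$ (the latter is available since, on each $\Theta_\nu$, the phase is a linear term plus an $O(1)$ correction, uniformly in $t\in I$). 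Relatedly, in the standard presentation of \cite{MSS93} the geometric input enters via C\'ordoba's $L^2$ bound for the Nikodym maximal function, which produces a gain in precisely this square function bound, while the reverse square function inequality $\|F_Rf\|_4\lc R^{\eps}\|Sf\|_4$ carries only an $R^\eps$ loss; your version instead locates the entire gain $\eps_0$ in the biorthogonality/overlap count. Either bookkeeping can yield a positive $\eps_0$ for the homogeneous two-dimensional cone, but the appeal to Plancherel is a gap that must be replaced by a genuine $L^4$ square function argument, and you should be aware that the particular form $\|F_Rf\|_{4}\lc R^{1/4-\eps_0}\|Sf\|_4$ is not the inequality that appears in \cite{MSS93}.
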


In this paper we shall need a square-function  extension of \eqref {lsineq} which involves  nonisotropic dilations of the associated multipliers 
of the form $\xi\mapsto (2^{-j}\xi_1, 2^{-bj} \xi_2)$ with $b\ge 1$, $j\in \bbZ$
(the strict inequality  $b>1$ assumed in the introduction is not used here); see (\ref{eq:vvls}) below.
We rely on  a  variant of a theorem in 
\cite{See88}, for families of smooth multipliers $\xi\mapsto m(\xi,t)$  on $\bbR^d$ depending continuously on the parameter 
$t\in I$, where $I$ is a compact interval. Let $\cP$ be a real matrix whose eigenvalues have positive real parts and consider the dilations $\delta_s=\exp(s\log \cP)$.
\begin{prop} \label{globalmultthm} 
	Let $2<p<\infty$ and $I \subset \bbR$ be a compact interval. Recall that $\eta$ is a radial non-trivial $C^\infty$ function with support in $\{\xi:1/2<|\xi|<2\}$. Suppose $$\sup_{t\in I} \sup_\xi |m(\xi,t)|\le A,$$ and assume that for all $f\in \cS(\bbR^d)$, 
	\[\sup_{s>0}\Big(\frac{1}{|I|}\int_I\big\| \cF^{-1}[\eta m(\delta_s \cdot, t)  \widehat f ] \big\|_p^pdt \Big)^{1/p} \le A\|f\|_p.
	\]
	Moreover, suppose that for all multiindices $\alpha$ with $|\alpha_1|+|\alpha_2|\le d+1$,
	$$\big|\partial_\xi^{\alpha} [\eta(\xi) m(\delta_s \xi,t) ]\big| \le B, \quad t\in I, s>0.$$ Then there is a constant $C_p>0$ such that 
	\Be
	\Big(\frac{1}{|I|}\int_I
	\big\| \cF^{-1}[m(\cdot, t)  \widehat f ] \big\|_p^pdt \Big)^{1/p} \le C_p A \log (2+ B/A)^{1/2-1/p} \|f\|_p.
	\Ee
\end{prop}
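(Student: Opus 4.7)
The plan is to reduce the estimate to a vector-valued square-function bound via a nonisotropic Littlewood--Paley decomposition adapted to the dilations $\delta_s$, and then to extract the sharp $\log$-power by a block decomposition combined with the pointwise embedding $\ell^p \hookrightarrow \ell^2$ on finite sums. This is the strategy of the log-loss multiplier theorems initiated in \cite{See88}.

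Concretely, I would choose $\psi \in C_c^\infty(\bbR^d \setminus \{0\})$ supported in a $\delta$-annulus with $\sum_{j \in \bbZ} \psi(\delta_{2^{-j}} \xi) = 1$ for $\xi \neq 0$, set $m_j(\xi, t) := \psi(\delta_{2^{-j}} \xi) m(\xi, t)$, and define $T_j f(x, t) := \cF^{-1}[m_j(\cdot, t) \widehat f](x)$. The hypothesis of the proposition applied with $s = 2^j$, together with the Mikhlin bound $B$ (used to absorb the mismatch between the fixed cutoff $\eta$ and the chosen $\psi$), yields the uniform dyadic space-time bound
\[
\Big(\tfrac{1}{|I|} \int_I \|T_j f\|_p^p \, dt \Big)^{1/p} \lc A \, \|P_j f\|_p,
\]
where $P_j$ is a fattened nonisotropic Littlewood--Paley projection at scale $j$. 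Since the Fourier supports of the $m_j$ lie in essentially disjoint $\delta$-annuli, the nonisotropic Littlewood--Paley inequality in the $x$-variable (valid for $p \ge 2$) reduces matters to controlling
\[
\|T_m f\|_{L^p(\bbR^d \times I)} \lc \Big\| \Big( \sum_j |T_j f|^2 \Big)^{1/2} \Big\|_{L^p(\bbR^d \times I)}.
\]

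To produce the factor $\log(2 + B/A)^{1/2 - 1/p}$, I would partition the set of $j$'s into $N \sim \log(2 + B/A)$ consecutive blocks chosen so that the Mikhlin condition controls the cross-contributions from well-separated blocks. On each block, applying the pointwise H\"older inequality
$\bigl( \sum_{j=1}^N |a_j|^2 \bigr)^{1/2} \le N^{1/2 - 1/p} \bigl( \sum_j |a_j|^p \bigr)^{1/p}$,
invoking the dyadic estimate from the previous step, and summing via standard Littlewood--Paley and Fefferman--Stein vector-valued inequalities gives the claimed bound $A \cdot N^{1/2 - 1/p} \|f\|_p$.

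The main obstacle is producing the sharper exponent $1/2 - 1/p$ rather than the easier $1/2$ that drops out of a direct Minkowski inequality inside the square function. Achieving this requires an interpolation between the free $L^2$ bound (supplied by Plancherel and the uniform estimate $|m| \le A$, which carries no logarithmic loss) and a large-$p$ bound derived from the Mikhlin hypothesis. Executing the block decomposition carefully so that the final constant depends only on the ratio $B/A$ -- rather than on $B$ alone, as in the standard Mikhlin--H\"ormander theorem -- is the delicate point of the argument.
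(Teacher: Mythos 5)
The paper gives no proof of this proposition; it simply cites \cite{See88} with the remark that the argument there carries over verbatim to the space-time setting, so there is no internal proof to compare your proposal against and the relevant comparison is with the \cite{See88} argument.

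Your sketch has the right skeleton (dyadic decomposition, Littlewood--Paley reduction to a square function, pointwise H\"older over blocks as the source of the exponent $1/2-1/p$), but there is a genuine gap in the final summation over blocks. After applying the block H\"older inequality, passing to $\ell^2(L^p)$ via Minkowski, and invoking the rescaled hypothesis, the chain of inequalities you invoke terminates at a quantity comparable to $\big(\sum_j \|\tilde P_j f\|_p^2\big)^{1/2}$. For $p>2$ this is \emph{not} controlled by $\|f\|_p$: take $f=\sum_{j=1}^M f_j$ where the $f_j$ have pairwise disjoint unit-scale spatial supports and pairwise disjoint dyadic frequency supports, each with $\|f_j\|_p=1$; then $\sum_j\|f_j\|_p^2=M$ while $\|f\|_p^p=\sum_j\|f_j\|_p^p=M$, so $\|f\|_p^2=M^{2/p}\ll M$. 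Minkowski's inequality goes the wrong way here ($\ell^2(L^p)\not\hookrightarrow L^p(\ell^2)$ when $p>2$), and the appeal to ``standard Littlewood--Paley and Fefferman--Stein vector-valued inequalities'' does not close this.

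The device that is missing, and that is central to \cite{See88}, is a truncation of the convolution kernels $K_j$ at spatial scale $2^{N-j}$, with $N\sim\log(2+B/A)$ chosen precisely so that the Mikhlin bound $B$ makes the $L^1$ norm of the tail $\ls B\,2^{-N}\ls A$. The tail operators then admit a vector-valued Fefferman--Stein bound with constant $A$ and no logarithmic loss. For the truncated part, the compact support of the kernels allows one to localize the square-function estimate to cubes of the appropriate scale, and it is this localization --- combined with the block H\"older inequality and a good-$\lambda$/sharp-function argument --- that produces the $N^{1/2-1/p}$ factor while avoiding the parasitic $\ell^2(L^p)$ quantity above. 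You gesture at the role of the Mikhlin bound (``controls the cross-contributions'') and correctly flag the ``delicate point,'' but without explicitly carrying out the kernel truncation, which is the mechanism converting an assumption on the ratio $B/A$ into a quantitative gain, the summation step does not close.

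Two smaller points: your phrase ``partition the set of $j$'s into $N\sim\log(2+B/A)$ consecutive blocks'' is ambiguous (you need blocks of \emph{size} $N$, of which there are infinitely many, not $N$ blocks), and your closing remark about ``interpolation between the free $L^2$ bound and a large-$p$ bound'' is a heuristic rather than a proof step; interpolating those two endpoints does not by itself produce the factor $\log(2+B/A)^{1/2-1/p}$.
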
 

The proof is exactly the same as the proof for standard multipliers in \cite{See88}.
We shall use   the following consequence for a square function  inequality to derive (\ref{eq:vvls}).

\begin{cor} \label{vectmultthm}  Let $2<p<\infty$ and $I\subset \bbR$ be a compact interval. 
	Suppose that  there is a compact subset $K\subset \bbR^2\setminus\{0\}$
	such that $m_0(\xi,t)=0$ if $\xi\in K^\complement$ or $t\in I^\complement$. 
	Suppose that for all multiindices $\alpha$ with $|\alpha_1|+|\alpha_2|\le 10$,
	$$|\partial_\xi^{\alpha} m_0(\xi,t) | \le B, \quad t\in I,$$ and that 
	$$\sup_{t\in I} \sup_{\xi}|m_0(\xi,t)|\le A.$$ 
	Moreover, suppose that for all $f\in \cS(\bbR^2) $ the inequality
	$$\Big(\frac{1}{|I|} \int_I\big\| \cF^{-1}[m_0(\cdot,t) \widehat f ] \big\|_p^pdt \Big)^{1/p} \le A\|f\|_p$$
	holds. Define $T_j f(x,t) $ by $\widehat {T_jf}(\xi,t) = m_0(\delta_{2^{-j}} \xi,t)  \widehat f(\xi)$.
	Then there is a constant $C(K,p)$ such that for all $\{f_j\} \in L^p(\ell^2)$ we also have
	\begin{multline} \label{vectineq-m} 
	\Big(\frac{1}{|I|} \int_I \Big\|\Big(\sum_{j\in \bbZ} |T_jf_j(\cdot,t)|^2\Big)^{1/2} \Big\|_p^p dt \Big)^{1/p} \\ \le C(K,p)
	A \log (2+ B/A)^{1/2-1/p} 
	\Big\|\Big(\sum_j|f_j|^2\Big)^{1/2}\Big \|_p.
	\end{multline}
\end{cor}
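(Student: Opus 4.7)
The strategy is to deduce the vector-valued inequality from the scalar bound of Proposition \ref{globalmultthm} by means of Khintchine's inequality applied to a Rademacher randomization. Let $\{\epsilon_j\}$ be independent Rademacher variables. By Khintchine,
\[
\int_I\int_{\bbR^2}\Big(\sum_j|T_jf_j(x,t)|^2\Big)^{p/2}dx\,dt \;\approx\; \int_0^1\int_I\Big\|\sum_j\epsilon_j(\omega)\,T_jf_j(\cdot,t)\Big\|_p^p\,dt\,d\omega,
\]
and likewise on the right-hand side for $(f_j)$. Thus it suffices to bound, uniformly in $\omega$, the randomized sum $\sum_j\epsilon_j(\omega)T_jf_j$ in the space-time $L^p$ norm.

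First I would reduce to the case when the $f_j$'s are nonisotropically Fourier-localized. Pick a smooth cut-off $\tilde\chi$ equal to $1$ on $K$, compactly supported in $\bbR^2\setminus\{0\}$, and let $\tilde P_j$ be the projection with symbol $\tilde\chi(\delta_{2^{-j}}\xi)$. Since $T_j=T_j\tilde P_j$, we have $T_jf_j=T_j(\tilde P_jf_j)$, and by vector-valued Littlewood--Paley theory $\|(\sum_j|\tilde P_jf_j|^2)^{1/2}\|_p\lesssim\|(\sum_j|f_j|^2)^{1/2}\|_p$. So assume $f_j=\tilde P_jf_j$. Next, by splitting the index set into $O_K(1)$ subsequences (of $j$'s spaced far enough apart that the Fourier supports $\delta_{2^j}\tilde K$ are pairwise disjoint), it suffices to treat one such subsequence. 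For such a subsequence, set $h:=\sum_kf_k$; because $\widehat{f_k}$ vanishes on $\supp m_0(\delta_{2^{-j}}\cdot,t)$ for $k\ne j$, we get $T_jh=T_jf_j$ and hence
\[
\sum_j\epsilon_j(\omega)\,T_jf_j \;=\; \cF^{-1}\bigl[M_\omega(\xi,t)\,\widehat h(\xi)\bigr], \qquad M_\omega(\xi,t):=\sum_j\epsilon_j(\omega)\,m_0(\delta_{2^{-j}}\xi,t).
\]

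The core step is to verify that $M_\omega$ satisfies the hypotheses of Proposition \ref{globalmultthm} with constants $\lesssim A$ and $\lesssim B$, uniformly in $\omega$. Since $K$ is compact in $\bbR^2\setminus\{0\}$, the supports of the summands $m_0(\delta_{2^{-j}}\xi,t)$ have bounded overlap, so $\sup_{\xi,t}|M_\omega|\lesssim A$. For $|\alpha|\le 3$, the functions $\eta(\xi)\,M_\omega(\delta_s\xi,t)=\sum_j\epsilon_j\,\eta(\xi)\,m_0(\delta_{s2^{-j}}\xi,t)$ receive contributions from only $O(1)$ indices $j$ (those with $s2^{-j}$ in a bounded range), and the chain rule together with the derivative hypothesis on $m_0$ gives $|\partial^\alpha[\eta M_\omega(\delta_s\cdot,t)]|\lesssim B$. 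For the space-time bound required in the hypothesis, the same finite-overlap decomposition, combined with the dilation-invariance of the scalar bound assumed on $m_0$, yields
\[
\sup_{s>0}\Big(\tfrac{1}{|I|}\int_I\big\|\cF^{-1}[\eta\,M_\omega(\delta_s\cdot,t)\,\widehat f]\big\|_p^p\,dt\Big)^{1/p}\lesssim A\|f\|_p.
\]
Applying Proposition \ref{globalmultthm} to $M_\omega$ and to $h$ gives a bound by $C_pA\log(2+B/A)^{1/2-1/p}\|h\|_p$, uniformly in $\omega$. Finally, because the $f_j$'s are Fourier-localized in the well-separated annuli $\delta_{2^j}\tilde K$, Littlewood--Paley theory gives $\|h\|_p=\|\sum_kf_k\|_p\lesssim\|(\sum_j|f_j|^2)^{1/2}\|_p$, and integration over $t\in I$ and $\omega\in[0,1]$ combined with Khintchine concludes the proof of \eqref{vectineq-m}.

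\textbf{Main obstacle.} The principal technical nuisance is the identification $\sum_j\epsilon_j T_jf_j=\cF^{-1}[M_\omega\widehat h]$, which requires the Fourier-support disjointness of the $f_j$'s across $j$. This is not free: the natural nonisotropic projection scales $\delta_{2^j}\tilde K$ can overlap between consecutive scales depending on how large $\tilde K$ must be taken relative to $K$, forcing the index-set splitting into $O_K(1)$ well-separated subsequences. Once this combinatorial book-keeping is in place, everything else is a straightforward application of finite overlap, dilation invariance, and Proposition \ref{globalmultthm}.
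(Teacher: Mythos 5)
Your proposal is correct and follows essentially the same route as the paper: split $\bbZ$ into $O_K(1)$ well-separated subsequences so that the nonisotropically dilated Fourier supports are disjoint, form the lump function $g$ (your $h$) whose $L^p$ norm is controlled by the dual Littlewood--Paley inequality, observe $T_j g = T_j f_j$, apply Proposition~\ref{globalmultthm} to the Rademacher-randomized multiplier $\sum_j r_j(\alpha) m_0(\delta_{2^{-j}}\cdot,t)$, and conclude by Khintchine. The only superficial difference is bookkeeping: you first replace $f_j$ by $\tilde P_j f_j$ via a vector-valued multiplier bound and then sum, whereas the paper builds $g=\sum_i L_{j_i} f_{j_i}$ directly; you also spell out the verification of the hypotheses of Proposition~\ref{globalmultthm} for the randomized multiplier, which the paper leaves implicit.
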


\begin{proof}[Proof of Corollary \ref{vectmultthm}.] This is a straightforward consequence of Proposition~\ref{globalmultthm} (alternatively one can adapt the proof of Proposition~\ref{globalmultthm} to a vector-valued setting). Let $\widetilde{\phi} \in C^\infty_c(\bbR^d\setminus\{0\})$ such that $\widetilde{\phi}(\xi)=1$ for $\xi\in K$. Let $\cJ$
	be a subset of integers with the property that the supports of 
	$\widetilde{\phi}(\delta_{2^{-j}}\cdot)
	$, $j\in  \cJ$ are disjoint. 
	We may write  $\bbZ$ as union over $C_K$ such families.
	It is sufficient to show the analogue of \eqref{vectineq-m}  with the $j$-summation extended over $\cJ$.
	It will be convenient to work with an 
	enumeration  $\{j_1,j_2,\dots \}$  
	of $\cJ$.

	Let $L_j$ be defined by $\widehat {L_j f} =
	\widetilde{\phi}(\delta_{2^{-j}}\xi)\widehat f(\xi).$
	Let $g= \sum_i L_{j_i} f_{j_i} $; then
	by the adjoint version of the Littlewood-Paley inequality
	we have
	\Be\label{dualLP}
	\|g\|_p\lc 
	\Big\|\Big(\sum_i |f_{j_i}|^2\Big)^{1/2} \Big\|_p.
	\Ee
	Notice that \Be \label{Tjg} T_j g= T_j f_j\Ee by the disjointness condition  on the supports of 
	$\phi(\delta_{2^{-j_i}}\cdot)$.
	Let  $\{r_i\}_{i=1}^\infty$ 
	denote the sequence of Rademacher functions. 
	Applying Proposition~\ref{globalmultthm} to the multipliers $$m_\alpha(\xi)=\sum_{i=1}^\infty r_i(\alpha) 
	m_0(\delta_{2^{-j_i}}\xi,t) $$ and the function $g= \sum_{i=1}^\infty
	\cF[ \widetilde{\phi}(\delta_{2^{-j_i}}\cdot) \widehat f_{j_i}]$
	we get %from Proposition \ref{globalmultthm} 
	\Be
	\Big(\int_0^1 \frac{1}{|I|} \int_I\big\| \cF^{-1}[m_\alpha(\cdot, t)  \widehat g ] \big\|_p^pdt\,d\alpha\Big)^{1/p} \lc A \log (2+ B/A)^{1/2-1/p} \|g\|_p\,.
	\Ee

	By interchanging the $\alpha$-integral  and the $(x,t)$-integral and applying Khintchine's inequality we obtain
	\[
	\Big(\frac{1}{|I|}  \int_I \Big\|\Big(\sum_{j\in \bbZ} |T_jg(\cdot,t)|^2\Big)^{1/2} \Big\|_p^p dt \Big)^{1/p} \lc 
	A \log (2+ B/A)^{1/2-1/p} \|g\|_p
	\] and the proof is completed  by applying
	\eqref{dualLP} and  \eqref{Tjg}. 
\end{proof}

\subsection{{\it A version of the Marcinkiewicz multiplier theorem}}\label{marcsect}
In the proof of Proposition \ref{lacthm} we shall use a well known version of the Marcinkiewicz multiplier theorem with minimal assumptions on the number of derivatives. Let 
$\eta_{\pr}$ be a nontrivial $C^\infty_c$ function which is even in all variables and supported in 
$\{\xi: 1/2<|\xi_i|\le 2, i=1,2\}$. Let $\sL^{
	2}_{\alpha,\alpha}$ 
the Sobolev space with mixed dominating smoothness consisting of $g\in L^2$ such that $$\|g\|_{\sL^2_{\alpha,\alpha}}=\Big(\int
(1+|\xi_1|^2)^{\alpha} (1+|\xi_2|^2)^{\alpha} |\widehat g(\xi)|^2 d\xi\Big)^{1/2} 
$$ is finite. Let $\alpha>1/2$ and $m$ be a bounded function such that
\Be\sup_{t_1>0, t_2>0} \|\eta_{\pr} \,m(t_1\cdot, t_2\cdot)\|_{\cL^2_{\alpha,\alpha}} \le B.
\label{Marccond} 
\Ee 
Then we have, for $1<p<\infty$, 
\Be\label{Marcthm} \|\cF^{-1}[m\widehat f]\|_p\le c_p B 
\|f\|_p.
\Ee
One can prove this using a straightforward product-type modification of Stein's proof of 
the Mikhlin-H\"ormander multiplier theorem in \cite[\S3]{Ste70}. One can also deduce it from R. Fefferman's theorem  \cite{F-R87}, cf. \cite{FLin}, \cite{CaSe92}.

\section{Some Maximal Function Estimates for Families 
	\\ of Mikhlin Type Multipliers on $\bbR^2$}\label{mikhlinmaxsect}

In this section we consider Mikhlin-H\"ormander multipliers with respect to the  dilation group 
$\delta^b_t$, $b>0$,
with $\delta_t^b(\xi)= (t\xi_1, t^b\xi_2)$.

\begin{thm}\label{SIO-thm}
	Suppose that
	\Be \label{hoermcond}
	\sup_{t>0} \sum_{|\alpha|\le 4} \big\|\partial^\alpha\big( \eta (\cdot) a(\delta^b_t\cdot)\big)\big\|_{L^1(\bbR^2)} \le 1
	\Ee
	Define, for $n\in \bbZ$ the operator $T_n$ by
	\Be
	\widehat {T_n f} (\xi)= a(\xi_1, 2^{bn}\xi_2)\widehat f(\xi) .
	\Ee
	Let $\cN$ be a subset of $\bbZ$ with $\# \cN=N$. Then for $1<p<\infty$,
	\Be \label {Lpconclusion}
	\Big\|\sup_{n\in \cN} |T_n f|\Big\|_p \le C_p  \sqrt{\log(1+N)} \|f\|_p.
	\Ee
\end{thm}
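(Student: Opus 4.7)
The proof is based on the Chang--Wilson--Wolff good-$\lam$ inequality (Proposition \ref{prop:CWW85}) applied in the $x_2$-variable, combined with a union bound over $\cN$ and uniform control of the associated martingale maximal function and martingale square function.

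\textbf{Main reduction.} Applying Proposition \ref{prop:CWW85} in the second variable to each $T_nf$ and summing over $n\in\cN$ yields, for $\eps\in(0,1/2)$ and $\lam>0$,
\begin{multline*}
\meas\big(\big\{\sup_n|T_nf|>4\lam\big\}\big) \lesim N e^{-c_1/\eps^2}\,\meas\big(\big\{\sup_n\cM^{(2)}T_nf>\lam\big\}\big)\\
+ \meas\big(\big\{\sup_n\fS^{(2)}T_nf>\eps\lam\big\}\big).
\end{multline*}
Choosing $\eps\sim (\log(1+N))^{-1/2}$ so that $c_2Ne^{-c_1/\eps^2}$ is as small as desired and integrating in $\lam$ reduces \eqref{Lpconclusion} to the two estimates (uniform in $\cN$):
\begin{enumerate}
\item[(I)] $\|\sup_n\cM^{(2)}T_nf\|_p\lesim\|\sup_n|T_nf|\|_p$,
\item[(II)] $\|\sup_n\fS^{(2)}T_nf\|_p\lesim\|f\|_p$.
\end{enumerate}
Estimate (I) lets us absorb the term $\|\sup_n|T_nf|\|_p$ into the left-hand side, while (II) then supplies the factor $\sqrt{\log(1+N)}$.

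\textbf{Proof of (I).} Since $\cM^{(2)}g\le 2M^{(2)}g$ pointwise, where $M^{(2)}$ is the Hardy--Littlewood maximal operator in the second variable, and $\sup_nM^{(2)}T_nf\le M^{(2)}(\sup_n|T_nf|)$ by monotonicity, (I) follows from the $L^p$-boundedness of $M^{(2)}$.

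\textbf{Proof of (II).} Applying Lemma \ref{martvsLP-lem} with some fixed $q\in (1,\min(2,p))$ and using that $T_n$ commutes with $P^{(2)}_{k,b}$, one has
\[
\fS^{(2)}T_nf\lesim\Big(\sum_k\big[M^{(2)}\big(|T_nP^{(2)}_{k,b}f|^q\big)\big]^{2/q}\Big)^{1/2}.
\]
Pulling $\sup_n$ past the $\ell^2(k)$-norm and inside $M^{(2)}$ (both by monotonicity of $t\mapsto t^{2/q}$ and of $M^{(2)}$), and applying the vector-valued Fefferman--Stein inequality for $M^{(2)}$, one reduces (II) to proving
\[
\Big\|\Big(\sum_k G_k^2\Big)^{1/2}\Big\|_p\lesim \|f\|_p,\qquad G_k:=\sup_n|T_nP^{(2)}_{k,b}f|.
\]
The final bound is obtained from a uniform pointwise estimate $G_k(x)\lesim M_{\mathrm{st}}(P^{(2)}_{k,b}f)(x)$, where $M_{\mathrm{st}}$ denotes the strong maximal operator on $\bbR^2$, followed by the vector-valued Fefferman--Stein inequality for $M_{\mathrm{st}}$ and the Littlewood--Paley inequality.

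\textbf{Main obstacle.} The technical heart of the proof lies in the uniform pointwise bound for $G_k$. Heuristically this is plausible: the operators $\{T_n\}$ are nonisotropic dilates of a single Mikhlin--H\"ormander multiplier with respect to $\delta^b_t$, so the kernels of $T_nP^{(2)}_{k,b}$ are localized at the anisotropic scale $(2^{-(n+k)},2^{-bk})$ in $(y_1,y_2)$; the supremum in $n$ runs over all $y_1$-scales while keeping the $y_2$-scale fixed at $2^{-bk}$, which is exactly the structure controlled by $M_{\mathrm{st}}$. Making this rigorous requires a further Littlewood--Paley decomposition in $\xi_1$ to convert the $\xi_1$-Mikhlin symbol into an absolutely summable series of Schwartz bumps at dyadic $(y_1,y_2)$-scales, and then summing the resulting contributions against $M_{\mathrm{st}}$. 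This is the ``approximation argument'' of \cite{GHLR17} adapted to the present anisotropic setting.
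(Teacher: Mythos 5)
Your overall strategy agrees with the paper's: apply the Chang--Wilson--Wolff good-$\lambda$ inequality in the second variable with $\eps\sim(\log N)^{-1/2}$, convert the martingale square function to a Littlewood--Paley square function via Lemma~\ref{martvsLP-lem}, and close with vector-valued maximal estimates. Your treatment of the $\cM^{(2)}$-term by an absorption argument, rather than the paper's route (Marcinkiewicz interpolation to a weak-type estimate plus a Chebyshev bound using the uniform $L^p$-boundedness of each $T_n$), is a minor variation; it works for Schwartz $f$ and finite $\cN$ provided you note the a priori finiteness of $\|\sup_{n\in\cN}|T_nf|\|_p$ that the absorption requires.

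However, your proof of (II) has a genuine gap. The claimed pointwise bound $G_k=\sup_n|T_nP^{(2)}_{k,b}f|\lc M_{\mathrm{str}}(P^{(2)}_{k,b}f)$ is false, and the heuristic justifying it is incorrect. After restricting to $|\xi_2|\sim 2^{kb}$, the $\xi_1$-multiplier of $T_nP^{(2)}_{k,b}$ is not a single bump at scale $2^{n+k}$: writing $a(\xi)=\sum_j a_j(2^{-j}\xi_1,2^{-jb}\xi_2)$ with $a_j$ supported in an annulus, one has
\[
T_nP^{(2)}_{k,b}f=\sum_{j\ge n+k-1}\cF^{-1}\big[a_j(2^{-j}\cdot,2^{(n-j)b}\cdot)\big]*P^{(2)}_{k,b}f,
\]
and, modulo an error term that \emph{is} dominated by $M_{\mathrm{str}}(P^{(2)}_{k,b}f)$, this is the partial sum $\sum_{j\ge n+k-1}\cF^{-1}[a_j(2^{-j}\cdot,0)]*P^{(2)}_{k,b}f$ of a general one-variable Mikhlin--H\"ormander multiplier acting in $x_1$. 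Taking the supremum over $n$ therefore produces the maximal truncation of a genuine singular integral in $x_1$, which is not pointwise dominated by a maximal function alone. Concretely, if $a(\xi_1,\xi_2)=\mathrm{sgn}(\xi_1)$ outside an anisotropic ball (a legitimate choice under \eqref{hoermcond}), your claim would reduce on one-dimensional slices to bounding the maximal truncated Hilbert transform by $M$, which is false. Your proposed remedy of ``a further Littlewood--Paley decomposition in $\xi_1$ to convert the $\xi_1$-Mikhlin symbol into an absolutely summable series of Schwartz bumps'' also fails: the $L^1_{y_1}$ norms of the dyadic pieces of a Mikhlin kernel are each of order one and their sum over $j$ diverges absolutely.

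The paper's fix is Cotlar's inequality (Proposition~\ref{Cotlarprop}, applied one-dimensionally to the $x_1$-slices), which produces the correct pointwise bound $G_k\lc M_{\mathrm{str}}(P^{(2)}_{k,b}f)+M^{(1)}\big(T^{(1)}P^{(2)}_{k,b}f\big)$, where $T^{(1)}$ is the one-variable singular integral with multiplier $\sum_j a_j(2^{-j}\xi_1,0)$. The extra term involving $T^{(1)}$ is then handled by the Marcinkiewicz--Zygmund theorem on $L^p(\ell^2)$ before Fefferman--Stein and Littlewood--Paley close the argument. Once you insert the Cotlar inequality and track the $T^{(1)}$-term, your outline becomes the paper's proof.
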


By the Marcinkiewicz interpolation theorem it suffices to show that there is $A=A(p)$ such that the inequality
\Be \label{wtineq}
\meas\big(\{x:  \sup_{n\in \cN} |T_n f|>4\lambda\}\big)
\le \big( A  \sqrt{\log(1+N)}  \la^{-1}  \|f\|_p\big)^p
\Ee holds for all Schwartz functions $f$ whose Fourier transform is compactly supported in $\bbR^2\setminus\{0\}$,  all $\la>0$ and all 
$\cN$ with $ \#\cN\le N$.

One can decompose
\Be a(\xi_1,\xi_2)=\sum_{j\in \bbZ} a_j (2^{-j}\xi_1,2^{-bj}\xi_2) \Ee
where 
each $a_j$ is supported in $\{(\xi_1,\xi_2): 1/2< |\xi_1|+ |\xi_2|^{1/b}< 2\}$ and 
$$ \sup_j\int\big|\partial_\xi^\alpha a_j(\xi) \big|\, d\xi \le C_\alpha, \quad|\alpha|\le 4.
$$ 
We shall repeatedly use that the operators $T_n$ are bounded on $L^p(\bbR^2)$ with norm 
independent of $n$. This follows by the Mikhlin-H\"ormander multiplier theorem and rescaling in the second variable.

Let  $\cT_\cN f:=\sup_{n\in \cN} |T_n f|$ and set 
\Be \label{defofepsN} \varepsilon_N :=  (\log (C_1N))^{-1/2}\Ee
where $C_1>c_1^{-1} $ with $c_1$ as in \eqref{eq:CWW85},
also $\eps_N<1/2$. 
 Since $f$ is a  Schwartz function, with  $\widehat f$ compactly supported in $\bbR^2\setminus \{0\}$ the function  $\cT_\cN f$ is in $L^\infty\cap L^2$ which allows us to apply the Chang-Wilson-Wolff inequality.

We have that
\beq\label{eq:H*set_est}
\begin{split}
	& \meas\big(\{x\in\R^2:\cT_\cN f(x)>4\lambda \} \big) \\
	&\quad \le \sum_{n\in\cN} \meas\big(\{x\in\R^2:|T_n f(x)|>4\lambda,\,\fS^{(2)} T_n f(x)\le \varepsilon_N \lambda \} \big)\\
	&\quad\quad + \meas\big( \big \{x\in\R^2 : \sup_{
		n\in \cN} |\fS^{(2)} [T_n f](x)| > \varepsilon_N \lambda \big \} \big).
\end{split}
\endeq

By the Chang-Wilson-Wolff inequality \eqref{eq:CWW85}, the first term on the right hand side of (\ref{eq:H*set_est}) is bounded by
\begin{align*}
& c_2 Ne^{-c_1\eps_N^{-2}}
\max_{n\in \cN} 
\meas\big(\big\{x\in \bbR^2: \mathcal{M}^{(2)}[T_nf]>\lambda\big\}\big) 
\\
&\le 
c_2 Ne^{-c_1\eps_N^{-2}}
\max_{n\in \cN}\lambda^{-p} 
 \|\cM^{(2)}[T_n f]\|_p^p 
\lc Ne^{-c_1\eps_N^{-2}}
\lambda^{-p} 
\|f\|_p^p
\lc \lambda^{-p} 
\|f\|_p^p
\end{align*}
where we used that $Ne^{-c_1\eps_N^{-2}}\le 1$ (by  \eqref{defofepsN}) and that 
the operators $T_n$ are uniformly bounded.

By Chebyshev's inequality  the  second term on the right hand side of (\ref{eq:H*set_est}) is bounded by
\begin{align*}
&\varepsilon_N^{-p} \lambda^{-p} \Big \|
\sup_{n\in \cN} \fS^{(2)}[T_n f]
\Big \|_{L^p}^p
\\
&\lc  \varepsilon_N^{-p} \lambda^{-p} \Big \|\sup_{n\in\cN} 
\Big(\sum_{k\in \bbZ} \big[
M^{(2)}( |T_n P^{(2)}_{k,b}  f|^q) ]^{2/q} \Big)^{1/2}
\Big\|_p^p\,.
\end{align*}

Here we have used Lemma \ref{martvsLP-lem}
with $g=T_n f$ 
and the fact that the operators  $T_n$  and $P^{(2)}_{k,b} $ commute; $q$ will be chosen so that $1 < q < p$.

We shall now use an idea in \cite{GHLR17} and approximate the operators $T_n $ by a convolution operator acting in the first variable. Define $T^{(1)} $ by
\begin{align*}
&\widehat {T^{(1)}  f} (\xi_1,\xi_2)= \sum_{j\in\bbZ} a_j(2^{-j}\xi_1,0) \widehat f(\xi_1,\xi_2).
\end{align*}
Recall the definition of $\chi_b$ in Lemma \ref{martvsLP-lem}. Notice also that \[a_j(2^{-j}\xi_1, 2^{(n-j)b}\xi_2)\chi_{b}(2^{-kb}\xi_2)\equiv 0\] if 
$j<n+k-1$ and therefore we have 
\begin{subequations}
	\begin{align}T_n P^{(2)}_{k,b}f
	&=  \sum_{j\ge n+k-1} \cF^{-1} [ a_j(2^{-j} \cdot, 2^{(n-j)b}\cdot)] * P^{(2)}_{k,b}f\notag
	\\
	\label{firstonevarterm} 
	&=  \sum_{j\ge n+k-1} \cF^{-1} [ a_j(2^{-j} \cdot, 0)] * P^{(2)}_{k,b}f 
	\\  \label{seconddiffterm}&+   \sum_{j\ge n+k-1} \cF^{-1} [ 
	a_j(2^{-j} \cdot, 2^{(n-j)b}\cdot)-
	a_j(2^{-j} \cdot, 0)]
	* P^{(2)}_{k,b}f.
	\end{align}
\end{subequations}
For the first term \eqref{firstonevarterm}  we use the 
one-dimensional version of Proposition  \ref{Cotlarprop}  to  get
\begin{align} \label{firstonevartermbd} 
\Big|\sum_{j\ge n+k-1} \cF^{-1} [ a_j(2^{-j} \cdot, 0)] * P^{(2)}_{k,b}f \Big| 
\lc M ^{(1)} (P^{(2)}_{k,b}f) +  M ^{(1)}( T^{(1)} P^{(2)}_{k,b}  f) .
\end{align} 
Here $M^{(1)}$ denotes the Hardy-Littlewood maximal operator acting on the first variable. 

Now consider  the second term \eqref{seconddiffterm}. Let $\tilde\phi$ be an appropriately chosen non-negative bump function supported in $(1/4,3)\cup (-3, -1/4)$ and let $K_{j,k,n}$ be the convolution kernel with multiplier
$$\widehat {K_{j,k,n}} (\xi) =\tilde\phi(2^{-kb}\xi_2)\big( 
a_j(2^{-j} \xi_1, 2^{(n-j)b}\xi_2) -
a_j(2^{-j} \xi_1, 0)\big).
$$
Then 
$$
\widehat {K_{j,k,n}} (2^j\xi_1, 2^{kb}\xi_2) =
2^{(k+n-j)b} 
\tilde\phi(\xi_2) \xi_2 \int_0^1 \partial_{2} a_j  (\xi_1, 2^{(k+n-j)b}s\xi_2) \, ds
$$
and we have 
$\big\|\partial^\alpha\big( \widehat {K_{j,k,n}} (2^j\cdot, 2^{kb}\cdot) \big)\big\|_1
\lc 2^{(k+n-j)b}$ for multiindices  $|\alpha|\le 3$. This implies 
\begin{align*} 
|K_{j,k,n}(x)|\lc 2^{(k+n-j)b} 
\frac{2^{j+kb}}{ (1+2^j|x_1|+ 2^{kb}|x_2|)^{3} }
\end{align*} and hence
$$\sum_{j\ge n+k-1} |K_{j,k,n} * P^{(2)}_{k,b} f(x)
|\lc M_{\text{str}}(P^{(2)}_{k,b} f) (x) $$
where 
$M_{\text{str}}$ is the strong maximal operator which is controlled by 
$M^{(2)}\circ M^{(1)}$.

Combining the estimates 
we thus see that
the 
second term on the right hand side of (\ref{eq:H*set_est}) is bounded by
\begin{align*}
\eps_N^{-p}\la^{-p} \biggl( 
&\Big \| \Big(\sum_{k\in \bbZ} \big[
M^{(2)}( |M^{(2)}M^{(1)} 
P^{(2)}_{k,b}  f|^q) ]^{2/q} \Big)^{1/2}
\Big\|_p
\\ 
&+
\Big \| \Big(\sum_{k\in \bbZ} \big[
M^{(2)}( |M^{(2)}M^{(1)} T^{(1)}  
P^{(2)}_{k,b}  f|^q) ]^{2/q} \Big)^{1/2}
\Big\|_p\biggr)^p .
\end{align*}
We use this with $1<q<p$ and apply  Fefferman-Stein estimates  for the vector-valued versions of 
$M^{(1)}$ and $M^{(2)}$ and the Marcinkiewicz-Zygmund theorem on $L^p(\ell^2) $ boundedness applied to the operator $T^{(1)}$.
Consequently  the last expression can be  bounded by 
$$C_p^p \eps_N^{-p}\la^{-p}\|f\|_p^p
\lc C_p^p (\log (1+N))^{p/2}\la^{-p}\|f\|_p^p\, ,
$$
by the definition of $\eps_N$. This finishes the proof of \eqref{wtineq} and thus the proof of Theorem \ref{SIO-thm}.
\qed

\section{Proof of Theorem \ref{Suthm}}\label{Suproofsect}
We decompose 
$  \Phi_0=\sum_{l\in \bbZ} \Phi_{0,l}$
where $\widehat \Phi_{0,l}(\xi)= \chi_+(2^{-l}|\xi| )\widehat \Phi_{0}(\xi)$ .
Define
\begin{align*}
a_{0,l} (\xi)&= \widehat \Phi_{0,l} (2^l \xi),
\\
\widetilde a_{0,l,s } (\xi)&= s^{b}\xi_2 \frac{\partial \widehat \Phi_{0,l}}{\partial\xi_2} (2^l \xi_1, 2^l s^b\xi_2 ).
\end{align*}
Then the functions $a_{0,l}$ and $\widetilde a _{0,l,s} $, for every $s\in (1/2,2)$, are supported in $\{\xi: 10^{-b}<|\xi|<10^b\}$  and satisfy the estimates
\[ \int\big|\partial_\xi^\alpha a_{0,l}(\xi)\big |d\xi +
\int\big|\partial_\xi^\alpha \widetilde a_{0,l,s}(\xi)\big|d\xi 
\le C 2^{-|l|}
\]
for all multiindices $\alpha$ with $|\alpha_1|+|\alpha_2| \le 10$.
This means that there is a $c>0$ such that  the multipliers
\Be \label{al-als-multipliers}
\begin{aligned} a_l(\xi) &= c2^{|l|} \sum_{j\in \bbZ}  a_{0,l} (2^{-j}\xi_1,2^{-jb}  \xi_2),
	\\
	\widetilde a_{l,s} (\xi) &= c2^{|l|} \sum_{j\in \bbZ}  a_{0,l,s} (2^{-j}\xi_1,2^{-jb}  \xi_2)
\end{aligned}
\Ee
satisfy the conditions 
\eqref{hoermcond} in Theorem \ref{SIO-thm}. Now define operators $S^u_l$ and $R^u_l$ 
\begin{align*} 
\widehat {S^{u}_l f}(\xi) &= \sum_{j\in \bbZ} 
\widehat {\Phi_{0,l}}(2^{-j}\xi_1, 2^{-jb}u\xi_2)
\widehat  f(\xi),
\\
\widehat {R^{u}_l f}(\xi) &= \sum_{j\in \bbZ} 
\widehat {\Phi_{0,l}}(2^{l-j}\xi_1, 2^{l-jb}u\xi_2)
\widehat f(\xi).
\end{align*}
The assertion of the theorem follows if we can prove
\Be\label{Sul-claim}
\big\| \sup_{u\in U} |S^{u}_{l} f| \big\|_p
\lc 2^{-|l|} \sqrt{\log \fN(U)} \|f\|_p
\notag
\Ee
which follows by isotropic rescaling  from
\Be\label{Rul-claim}
\big\| \sup_{u\in U} |R^{u}_{l} f| \big\|_p
\lc 2^{-|l|} \sqrt{\log \fN(U)} \|f\|_p.
\Ee

Now let $$\cN=\{n\in \bbZ: \exists s\in (1/2,2) \text{ such that } (2^{n}s)^{b} \in U\}.$$
Observe that $\#\cN \le C(b) \fN(U)$. The inequality \eqref{Rul-claim} follows from

\Be\label{Rul-stronger-claim}
\big\| \sup_{n\in \cN} \sup_{1/2<s<2} 
|R^{(2^{n}\!s)^b}_{l} f| \big\|_p
\lc 2^{-|l|} \sqrt{\log (1+\#\cN)} \|f\|_p
\notag
\Ee
which is a consequence of 
\Be\label{Rul-stronger-claim-lac}
\Big\| \sup_{n\in \cN}   |R^{2^{nb}}_{l} \!f
| \Big\|_p
\lc 2^{-|l|} \sqrt{\log (1+\#\cN)} \|f\|_p 
\Ee
and 
\Be\label{Rul-stronger-claim-der}
\int_{1/2}^{2} 
\Big\| \sup_{n\in \cN}  \big |\frac{\partial}{\partial s} R^{(2^{n}\!s)^b}_{l} \!f
\big| \Big\|_p\,ds\,
\lc 2^{-|l|} \sqrt{\log (1+\#\cN)} \|f\|_p \,. 
\Ee
Since 
\begin{align*} \cF[R^{2^{nb}}_{l}\! f](\xi)&= \sum_j a_{0,l}(2^{-j}\xi_1, 2^{nb-jb}\xi_2) \widehat f(\xi),\\
\cF[\partial_s R^{(2^{n}\!s)^b}_{l} \!f
](\xi) &= \frac {b}{s} \sum_j a_{0,l,s} (2^{-j}\xi_1, 2^{nb-jb}\xi_2) \widehat f(\xi),
\end{align*} 
the  inequalities  \eqref{Rul-stronger-claim-lac} and 
\eqref{Rul-stronger-claim-der}
follow by applying Theorem \ref{SIO-thm} to the multipliers 
in \eqref{al-als-multipliers}. \qed

\section{Proof of Theorem \ref{Tuthm}}\label{Tuproofsect}
We only consider the maximal function for the operator $T^u_+$, since the analogous problem for $T^u_-$ can be reduced to the former one by a change of variable
(with a different curve). We omit the subscript and set $T^u=T^u_+$.

Decompose $\kappa_{0,+}=\sum_{\ell=0}^{\infty} \kappa_{0,\ell}$ where
$$
\widehat {\kappa_{0,\ell}}(\xi) = \chi_+(2^{-\ell }|\xi|) \omega_+(\xi) e^{i\Psi_+(\xi)}.
$$
Notice that, by Lemma \ref{symbollemma},  $|\xi_1|\approx|\xi_2|\approx 2^{\ell}$ for $ \xi\in \supp(\widehat {\ka_{0,\ell}})$.
Define $\kappa_{j,\ell}$ by $\widehat {\ka_{j,\ell}} (\xi)= \widehat{\kappa_{0,\ell}} (2^{-j}\xi_1,2^{-jb}\xi_2) $ and define $T^u_{j,\ell}$ by
\Be \label{Tujelldef}
\widehat {T^u_{j,\ell} f}(\xi) = \widehat{\kappa_{j,\ell} }(\xi_1,u \xi_2) \widehat{f}(\xi).
\Ee
Then we have $T^u=\sum_{\ell \ge 0} \sum_{j\in \bbZ} T^u_{j,\ell}$.

The assertion of the theorem follows if we can show, for $2 < p < \infty$, that there exists some $\eps=\eps(p)>0$ with
\Be \label{T-ell-assertion} 
\Big \|  \sup_{n\in \bbZ} \sup_{1/2<s<2} \Big|\sum_{j\in \bbZ} 
T_{j,\ell}^{(2^{n}s)^{b}} \!f\Big|\,\Big\|_p\lc 2^{-\ell \eps} \|f\|_p.
\Ee

Define  $\cR^u_{j,\ell} $ by 
$$
\widehat {\cR^u_{j,\ell} f}(\xi) = \widehat{\kappa_{0,\ell} }(2^{\ell -j}\xi_1,2^{\ell -jb}u \xi_2) \widehat{f}(\xi).
$$
By isotropic rescaling
inequality \eqref{T-ell-assertion}  is equivalent with 
\Be \label{R-ell-assertion} 
\Big \|  \sup_{n\in \bbZ} \sup_{1/2<s<2} \big| \sum_{j\in \bbZ} 
\cR_{j,\ell} ^{(2^{n}s)^{b}} \!f\big|\Big\|_p\lc 2^{-\ell \eps} \|f\|_p.
\Ee
This inequality follows, by the embedding $\ell^p\subset \ell^\infty$ and Fubini's theorem  from
\Be\label{Rellmax}
\Big(\sum_{n\in \bbZ} 
\Big \| \sup_{ 1/2< s<2} \big| \sum_{j\in \bbZ}
\cR_{j,\ell}^{(2^n s)^{b}}\!f
\big|\Big\|_p^p\Big)^{1/p} \lc 2^{-\ell \eps} \|f\|_p
\Ee
Fix $n,x$ and set $G(s)=  \sum_j \cR_{j,\ell} ^{(2^n s)^{b}} \!f(x)$. We use the standard argument of applying the fundamental theorem of calculus to $|G(s)|^p$ and then H\"older's inequality  which gives
%Sobolev inequality 
\[|G(s)|^p \le  |G(1)|^p + p \Big(\int_{1/2}^2
|G(s) |^pds\Big)^{1/p'} \Big(\int_{1/2}^2 |G'(s) |^pds\Big)^{1/p} .
\]
This inequality and another application of H\"older's inequality in $\bbR^2$ shows that \eqref{Rellmax} follows from
\begin{subequations} 
	\Be
	\label{Relllsm}
	\Big(\sum_{n\in \bbZ} \int_{1/2}^2
	\Big \|  \sum_j
	\cR_{j,\ell}^{(2^n s)^{b}}\!f
	\Big\|_p^pds\Big)^{1/p} \lc 2^{-\ell (\eps+1/p)} \|f\|_p,
	\Ee
	\Be 
	\label{Relllsmder}
	\Big(\sum_{n\in \bbZ} \int_{1/2}^2
	\Big \| \frac{\partial}{\partial s} \Big(\sum_j
	\cR_{j,\ell} ^{(2^n s)^{b}}\!f\Big)
	\Big\|_p^pds\Big)^{1/p} \lc 2^{\ell -\ell (\eps+1/p)} \|f\|_p
	\Ee
	and
	\Be 
	\label{Rellfixed}
	\Big(\sum_{n\in \bbZ} \Big \|  \sum_j \cR_{j,\ell}^{2^{nb}}\!f
	\Big\|_p^p \Big)^{1/p} \lc 2^{-\ell/p} \|f\|_p
	\Ee
\end{subequations}
for $2 < p < \infty$.

We focus on the derivation of the inequality \eqref{Relllsm}. Note that for $s\in [1/2,2]$ 
\begin{align*} \widehat {\kappa_{0,\ell}}(\xi_1, s^b \xi_2)
&= \omega_+(\xi_1, s^b\xi_2) \chi_+(2^{-\ell} |(\xi_1, s^b \xi_2)| )
e^{i \Psi_+(\xi_1, s^b\xi_2)} 
\\&= 
2^{-\ell/2} \eta_{\ell,s}(2^{-\ell}\xi)
e^{-is^{\frac{b}{b-1}}\Psi_+(\xi_1, \xi_2)}
\end{align*}
where 
$$\eta_{\ell,s}(\xi_1,\xi_2)
= 2^{\ell/2} 
\omega_+(2^{\ell} \xi_1, 2^{\ell}  s^b\xi_2)  \chi_+(|(\xi_1, s^b \xi_2)| )
$$
and taking into account that $\om_+$ is a symbol of order $-1/2$ we see that the 
 $\eta_{\ell,s}$ belong to a bounded set of $C^\infty$ functions supported in an annulus $\{\xi: a_0\le |\xi|\le a_0^{-1}\}$,  for fixed $a_0=a_0(b)<1$.

After changing variables $t=s^{-\frac b{b-1}}$,  with $t\in (2^{-\frac{b}{b-1}} ,2^{\frac{b}{b-1}}) $
this puts us in the position to apply \eqref{lsineq}
 with $R=2^{\ell }$ 
and we obtain, with suitable 
$\eps'=\eps'(p)>0$ 
\[
\Big( \int_{1/2}^2
\Big \| \F^{-1}[ \widehat {\kappa_{0,\ell}} (\xi_1, s^b \xi_2) 
\widehat f]\Big\|_p ^p
ds\Big)^{1/p} \lc 2^{-\ell (\eps'+1/p)} \|f\|_p.
\] 
By isotropic scaling, replacing 
$ \widehat {\kappa_{0,\ell}} (\xi_1, s^b \xi_2) $ with 
$ \widehat {\kappa_{0,\ell}} (2^\ell \xi_1, s^b 2^\ell \xi_2) $,   we also have 
\Be \label{assa-of-vectthm}
\Big(\int_{1/2}^2
\big \| 
\cR_{0,\ell} ^{s^b}f
\big\|_p ^p
ds\Big)^{1/p} \le C_\eps 2^{-\ell (\eps'+1/p)} \|f\|_p.
\Ee

Let $$m_{j, \ell} (\xi, s) = 
\widehat {\kappa_{0,\ell} }(2^{\ell-j}  \xi_1, s^b 2^{\ell -jb}  \xi_2) $$  
and observe  $\widehat{\cR_{j,\ell} ^{s^{b}}f} (\xi) = m_{j, \ell} (\xi, s) \widehat f(\xi)$.
The  functions  $\xi \mapsto m_{0,\ell}(\xi,s) $ 
are supported in a fixed annulus  and satisfy 
\Be \label{assb-of-vectthm}
|\partial_{\xi_1}^{\alpha_1 }
\partial_{\xi_2}^{\alpha_2}
m_{0,\ell}(\xi,s) | \lc 2^{\ell(\alpha_1+\alpha_2)} .
\Ee By  Corollary 
\ref{vectmultthm} we get the inequality
\begin{multline} \label{eq:vvls}
\Big(\int_{1/2}^2 \Big\| \Big(\sum_{j\in \bbZ} | \cR^{s^{b}}_{j,\ell} f_j  |^2\Big)^{1/2} \Big\|_p^p ds\Big)^{1/p} \\
\lc  2^{-\ell(\eps'+1/p)} (1+\ell)^{1/2-1/p} 
\Big\|\Big(\sum_j|f_j|^2\Big)^{1/2}\Big\|_p.
\end{multline}

We can replace the multipliers 
$m_{j, \ell }(\xi_1,\xi_2,s) $ by 
$m_{j, \ell }( \xi_1,2^{nb} \xi_2, s) $, after scaling  in the second variable. 
This means that for every fixed $n$ we have proved, for $\eps<\eps'$,
\Be \label{vectvallsmprel}
\Big( \int_{1/2}^2
\Big \| \Big(\sum_j|
\cR_{j,\ell}^{(2^n s)^{b}}
\!f_j|^2 \Big)^{1/2} 
\Big\|_p^pds\Big)^{1/p} \lc 2^{-\ell (\eps+1/p)}  \Big\|\Big(\sum_j |f_j|^2\Big)^{1/2}\Big\|_p,
\Ee
with the implicit constant independent of $n$.

We now combine this with Littlewood-Paley inequalities to prove  \eqref{Relllsm}.
Let $\widetilde\chi^{(1)} $ be an even $C^\infty$ function supported on 
$\{\xi_1: |c_+|b 2^{-3b-1} \le |\xi_1|\le |c_+|b 2^{3b+1}\}$
and equal to $1$ for 
$ |c_+|b 2^{-3b} \le |\xi_1|\le |c_+|b 2^{3b}$.
Let $\widetilde\chi_b^{(2)} $ be an even $C^\infty$ function supported on 
$\{\xi_2: 2^{-2b-1} \le |\xi_2|\le2^{2b+1}\}$
and equal to $1$ for 
$ 2^{-2b} \le |\xi_2|\le2^{2b}$.
Define $\tilde P^{(1)}_{j} $, $\tilde P^{(2)}_{j,b} $ by
\begin{align*}
\widehat{\tilde P^{(1)}_{j} f}(\xi)&= \widetilde \chi^{(1)} (2^{-j}\xi_1) \widehat f(\xi)
\\
\widehat{\tilde P^{(2)}_{j,b} f}(\xi)&= \widetilde \chi^{(2)} (2^{-jb}\xi_2) \widehat f(\xi)
\end{align*} 
Then  by the support properties of $\widehat {\ka_{0,\ell}}(2^\ell\cdot)$  we get for  $1/2\le s\le 2$ 
\Be \label{Rjln-Fourierloc}
\cR_{j,\ell}^{(2^n s)^{b}}=
\tilde P^{(1)}_{j} 
\tilde P^{(2)}_{j-n,b} 
\cR_{j,\ell}^{(2^n s)^{b}}
\tilde P^{(2)}_{j-n,b} \tilde P^{(1)}_{j} .
\Ee
Hence, by Littlewood-Paley theory 
\begin{align*}
&\Big(\sum_{n\in \bbZ} \int_{1/2}^2
\Big \|  \sum_j
\cR_{j,\ell}^{(2^n s)^{b}}\!f
\Big\|_p^pds\Big)^{1/p} 
\\
&\lc
\Big(\sum_{n\in \bbZ} \int_{1/2}^2
\Big \|  \Big( \sum_j|
\cR_{j,\ell}^{(2^n s)^{b}}  \tilde P^{(2)}_{j-n,b} \tilde P^{(1)}_{j} f|^2\Big)^{1/2}
\Big\|_p^pds\Big)^{1/p} 
\end{align*} and by \eqref{vectvallsmprel} this is controlled by
\[ 
2^{-\ell(\eps(p) +1/p)} \Big(\sum_{n\in \bbZ} 
\Big \|  \Big( \sum_{j\in \bbZ}
|
\tilde P^{(2)}_{j-n,b} \tilde P^{(1)}_{j} f|^2\Big)^{1/2}
\Big\|_p^p\Big)^{1/p} 
\]
for some $\eps(p)>0$ when $2<p<\infty.$
We finish the proof of \eqref{Relllsm} 
by observing that 
\begin{align*}
\Big(\sum_{n\in \bbZ} 
\Big \|  \Big( \sum_{j\in \bbZ}|
\tilde P^{(2)}_{j-n,b} \tilde P^{(1)}_{j} f|^2\Big)^{1/2}
\Big\|_p^p\Big)^{1/p} \le 
\Big\|
\Big( \sum_{j\in \bbZ}\sum_{n\in \bbZ} |
\tilde P^{(2)}_{j-n,b} \tilde P^{(1)}_{j} f|^2\Big)^{1/2}
\Big\|_p &
\\=\Big \|  \Big( \sum_{k_1\in \bbZ} \sum_{k_2\in \bbZ} |
\tilde P^{(2)}_{k_2,b} \tilde P^{(1)}_{k_1} f|^2\Big)^{1/2}
\Big\|_p \lc \|f\|_p&
\end{align*} where we have used the embedding $\ell^2\hookrightarrow \ell^p$ for $p>2$, and applied a  two-parameter
Littlewood-Paley inequality.

We now turn to the estimate  \eqref{Relllsmder}.
A computation shows 
\begin{subequations} \begin{multline}
	2^{-\ell} \frac{\partial}{\partial s} \Big( \sum_j \cF[\cR_{j,\ell}^{(2^n s)^{b} }f](\xi)\Big)
	\\= \widehat f(\xi)  \frac{b}{s}  \sum_j \upsilon_\ell (2^{-j}\xi_1 , s^b 2^{(n-j)b} \xi_2) 
	e^{i2^\ell \Psi_+(2^{-j}\xi_1, s^b 2^{(n-j) b} \xi_2)}
	\end{multline}
	where 
	\begin{multline}\label{upsilonell} \upsilon_\ell(\xi)= 2^{-\ell}\chi_+'(|\xi|) \frac{\xi_2^2} {|\xi|} 
	\om_+(2^\ell\xi_1, 2^{\ell}\xi_2) 
	+\chi_+(|\xi|) \xi_2\frac{\partial \om_+}{\partial{\xi_2}}(2^\ell\xi_1, 2^{\ell}\xi_2) 
	\\+ \chi_+(\xi)\om_+(2^{\ell}\xi_1, 2^{\ell}\xi_2) i\xi_2\frac{\partial \Psi_+}{\partial{\xi_2}}(\xi_1, \xi_2).
	\end{multline}  
\end{subequations}
Here the main contribution in \eqref{upsilonell} comes from the third term 
(the others are similar but better by a factor of about $2^{-\ell}$).

It is now straightforward to check that in the proof of  \eqref{Relllsm} 
the term
$\cR_{j,\ell}^{(2^n s)^{b} }\!f$ can be replaced with
$2^{-\ell} \partial_s(\cR_{j,\ell}^{(2^n s)^{b} }\!f)$ and  one obtains
\eqref{Relllsmder}. 

Finally, a simple modification of the proof of (\ref{Relllsm}) would also prove (\ref{Rellfixed}): in place of \eqref{lsineq}, one would use a fixed time estimate, as stated immediately before \eqref{lsineq}.
This finishes the proof of Theorem \ref{Tuthm}.

\section{Maximal functions for lacunary sets} \label{lacunary-section}

We shall prove some upper bounds for the operator norm 
of $\cH^U$ for {\it lacunary} sets.

\noi{\it Definition.} Let $\ka>1$. 
A finite set $U$ is called $\ka$-lacunary if it can be arranged in a sequence 
$U=\{u_1<u_2<\dots<u_M\}$ where  $u_{j+1}\le  u_j/\ka$ for $j=1,\dots, M-1$.
$U$ is lacunary if $U$ is $\ka$-lacunary for some $\ka>1$.

\smallskip

Note that for lacunary sets we have $\# U \approx \fN(U)$ (with the implicit constant depending on $\ka$).

\begin{prop}\label{lacthm} Let 
	$U$ be a lacunary set.
	Then, for $4/3<p<\infty$
	\Be \label{lac43est}
	\|\cH^U\|_{L^p\to L^p}\lc  
	\sqrt{\log (1+(\#U))}\,.
	\Ee
	%The constants implicit in this equivalence depend only on $p$.
\end{prop}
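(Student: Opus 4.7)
The plan is to start from the decomposition $H^{(u)} = S^u + T^u_+ + T^u_-$ of \eqref{hilbtransformdec}. Theorem \ref{Suthm} already gives $\|\sup_{u \in U}|S^u f|\|_p \lesssim \sqrt{\log \fN(U)}\,\|f\|_p$ for every $p \in (1,\infty)$, and since $U$ is lacunary one has $\fN(U) \approx \#U$. So the $S^u$-contribution is of the desired size throughout $4/3 < p < \infty$, and the task reduces to bounding $\|\sup_{u \in U}|T^u_\pm f|\|_p$ by the same quantity. By symmetry it suffices to focus on $T^u := T^u_+$.

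For $p > 2$ this is immediate from Theorem \ref{Tuthm}, which yields $\|\sup_{u > 0}|T^u f|\|_p \lesssim \|f\|_p$ with no $U$-dependence at all. The substantive range is $4/3 < p \le 2$, and the idea is to exploit lacunarity through a sectorial frequency decomposition adapted to the phase of Lemma \ref{symbollemma}. Indeed, the multiplier $\widehat{\kappa_{j,+}}(\xi_1, u\xi_2)$ is essentially supported on the curve $|\xi_1|^b \approx b c_+ u |\xi_2|$; for $\kappa$-lacunary $U$ these supports fit into angularly well-separated sub-sectors of $\mathrm{Sect}_+$. I would introduce smooth frequency cutoffs $\Pi_u$ to slight enlargements of these sub-sectors so that $T^u = T^u \Pi_u$ up to rapidly decaying errors, and then estimate
\[
\sup_{u \in U}|T^u f| \;\le\; \Bigl(\sum_{u \in U}|T^u \Pi_u f|^2\Bigr)^{1/2} + (\text{error}).
\]
The main term is controlled in $L^p$ by a vector-valued Marcinkiewicz-Zygmund inequality applied to the uniformly $L^p$-bounded family $\{T^u\}$, which reduces matters to the lacunary sectorial square function estimate
\[
\Bigl\|\Bigl(\sum_{u \in U}|\Pi_u f|^2\Bigr)^{1/2}\Bigr\|_p \lesssim \sqrt{\log(1+\#U)}\,\|f\|_p.
\]
The natural tool here is the two-parameter Marcinkiewicz multiplier theorem \eqref{Marcthm} of \S\ref{marcsect}: applying it after a Rademacher randomization $\sum_u r_u(\omega)\widehat{\Pi_u}$ and invoking Khintchine's inequality converts the vector-valued statement into a scalar multiplier bound, and the averaging step produces the $\sqrt{\log(1+\#U)}$-loss.

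The hardest part, I expect, is two-fold. First, the error analysis: the oscillatory factor $e^{i\Psi_+}$ leaks the energy of $T^u(I-\Pi_u)f$ slightly outside the nominal sub-sectors, and one must justify that these tails are harmless -- a careful scale-by-scale bookkeeping in $j$ using the symbol estimates of Lemma \ref{symbollemma} should allow absorbing them either into the Hardy-Littlewood maximal function or into the already-estimated $\sup_u |S^u f|$. Second, pinning down exactly where the threshold $p = 4/3$ enters: it should appear either at the step verifying the Marcinkiewicz hypothesis \eqref{Marccond} uniformly in $\omega$ for the randomized sectorial cutoffs with the required smoothness $\alpha > 1/2$, or at the vector-valued bound for $\{T^u\}$ in its subrange below $p = 2$; making this threshold precise is the technical heart of the proof.
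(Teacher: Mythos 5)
Your reduction to $\sup_{u\in U}|T^u_\pm f|$ on the range $4/3<p\le 2$ is the right starting point and largely matches the paper, but two things go wrong.

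First, a conceptual error: you write that the $T^u$-part should also produce the $\sqrt{\log(1+\#U)}$ loss (via the Rademacher averaging). In fact the paper shows that for lacunary $U$ the estimate $\|\sup_{u\in U}|T^u_\pm f|\|_p\lc\|f\|_p$ holds with \emph{no dependence on $U$ at all} in the range $4/3<p\le 2$; all of the $\sqrt{\log\fN(U)}$ loss comes from the $S^u$-part via Theorem~\ref{Suthm}. Your square-function bound
$\|(\sum_{u}|\Pi_u f|^2)^{1/2}\|_p\lc\sqrt{\log(1+\#U)}\|f\|_p$ is therefore aiming at a weaker conclusion than what is both needed and actually true: since the sub-sectors are pairwise disjoint (lacunarity plus the sector structure of $\supp\widehat{\mu_{0,\pm}}$), the Littlewood--Paley square function for the product-type projections $\tilde P^{(1)}_j\tilde P^{(2)}_{j-n(\nu),b}$ is bounded with constant independent of $\#U$.

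Second, and more seriously, your proposed final step cannot close. Applying the two-parameter Marcinkiewicz theorem~\eqref{Marcthm} directly (even after randomization and Khintchine) gives no gain, because the multipliers $m_\ell$ built from $e^{i2^\ell\Psi_+}$ have $\sL^2_{\alpha,\alpha}$-norms that \emph{grow} like $2^{\ell(2\alpha-1/2)}$ — the derivatives of the phase cost a factor $2^\ell$ each. The missing ingredient is the dyadic decomposition $\ka_{0,+}=\sum_{\ell\ge 0}\ka_{0,\ell}$ (as in the proof of Theorem~\ref{Tuthm}) together with an \emph{interpolation in $\ell$}: the $L^2$ operator norm is $O(2^{-\ell/2})$ (because the symbol $\omega_+$ is of order $-1/2$ and the pieces are pairwise orthogonal in frequency), whereas the Marcinkiewicz bound gives $O(2^{\ell(2\alpha-1/2)})$ for $p$ near $1$ and any $\alpha>1/2$. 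Interpolating, with $\theta=2/p-1$, the exponent in $\ell$ is $-\tfrac12(1-\theta)+\theta(2\alpha-\tfrac12)$, which as $\alpha\to(1/2)^+$ tends to $\tfrac12(2\theta-1)$. This is negative precisely when $\theta<1/2$, i.e.\ $p>4/3$. Summing in $\ell$ then yields the result. Without this interpolation, the argument gives nothing below $p=2$, so the threshold $4/3$ is not a side effect of verifying Marcinkiewicz hypotheses uniformly but the exact outcome of this $L^2$/near-$L^1$ interpolation.
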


Proposition \ref{lacthm} will be used in the proof of lower bounds in \S\ref{karagulyan-sect}. For  this application it is important that \eqref{lac43est} just holds for some $p<2$. 
We do not know at this time whether the result extends to all $p>1$. \footnote{Added in September 2019: After the submission of this paper the  authors showed the bound of Proposition
\ref{lacthm}  for general lacunary sets $U$, in the  full range $1<p<\infty$.  This result can be found in the paper \cite{grsy2} which also  contains $L^p$ results,  $p<2$,  for more general sets $U$, under  suitable dimension assumptions.} 
For special lacunary sequences it does:
\begin{prop}\label{lacthmspecial} Let $U$ be a  subset 
	of $\{2^{nb}: n\in \bbZ\}$.
	Then, for $1<p<\infty$
	$$
	\|\cH^U\|_{L^p\to L^p}\lc  
	\sqrt{\log (1+(\#U))}\,.
	$$
	%The constants implicit in this equivalence depend only on $p$.
\end{prop}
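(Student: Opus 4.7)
The plan is to apply the decomposition $H^{(u)} = S^u + T^u_+ + T^u_-$ from \eqref{hilbtransformdec} and handle each summand separately. Theorem \ref{Suthm} directly gives $\bigl\| \sup_{u\in U} |S^u f|\bigr\|_p \lc \sqrt{\log \fN(U)}\,\|f\|_p$ for the full range $1<p<\infty$, and since $U \subseteq \{2^{nb} : n\in \bbZ\}$ with $b>1$ one has $\fN(U) \le \#U + 1$, so this contribution already satisfies the desired bound. For the parts $T^u_\pm$, Theorem \ref{Tuthm} furnishes a uniform $O(\|f\|_p)$ bound for $p>2$, so only the range $1<p\le 2$ requires additional work. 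By symmetry I would focus on $T^{2^{nb}}_+$ with $n\in\cN := \{n\in\bbZ:2^{nb}\in U\}$.

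The crucial feature of the special lacunary set $\{2^{nb}\}$ is that the symbol $m_1$ of $H^{(1)}$ is homogeneous of degree zero under $\delta^b_t(\xi)=(t\xi_1,t^b\xi_2)$, which yields the identity $\widehat{H^{(2^{nb})} f}(\xi) = m_1(2^{-n}\xi_1, \xi_2)\widehat f(\xi)$; the maximal function thus reduces to a one-parameter dilation-maximal operator for multipliers obtained from $m_1$ by dilation only in $\xi_1$. Correspondingly each piece $T^{2^{nb}}_{j,\ell,+}$ from \eqref{Tujelldef} has Fourier support in the rectangle $\{|\xi_1|\sim 2^{j+\ell},\ |\xi_2|\sim 2^{(j-n)b+\ell}\}$, and for fixed $j,\ell$ these rectangles have $\xi_2$-scales separated by factors of $2^b>2$ as $n$ ranges over $\cN$. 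After splitting $\cN$ into $O_b(1)$ sufficiently lacunary subsets, one obtains an exact Littlewood--Paley-type disjointness in the $\xi_2$ variable.

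Exploiting this, I would write $T^{2^{nb}}_{j,\ell,+} f = T^{2^{nb}}_{j,\ell,+}\tilde P^{(2)}_{(j-n)b+\ell} f$ as in \eqref{Rjln-Fourierloc}, use the pointwise bound $\sup_n|\cdot|\le(\sum_n|\cdot|^2)^{1/2}$, and invoke a vector-valued extension of the local-smoothing square-function estimate of Corollary \ref{vectmultthm} together with the classical $\xi_2$-Littlewood--Paley inequality, valid on the full range $1<p<\infty$, to obtain
\[
\Big\|\sup_{n\in \cN} \big|T^{2^{nb}}_{j,\ell,+}f\big|\Big\|_p \lc 2^{-\eps\ell}\,\|f\|_p,\qquad \eps=\eps(p)>0,
\]
where the decay in $\ell$ comes from the symbol order $-1/2$ of $\omega_+$. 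Summing in $j$ via the two-parameter Littlewood--Paley estimate as at the end of \S\ref{Tuproofsect}, and then in $\ell \ge 0$, would close the argument.

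The main technical obstacle is the vector-valued inequality in the last display when $1<p\le 2$: the operators $T^{2^{nb}}_{j,\ell,+}$ genuinely depend on $n$, so Marcinkiewicz--Zygmund does not apply directly, and generic sectorial Littlewood--Paley bounds only go in one direction when $p<2$. The resolution is that the family $\{T^{2^{nb}}_{j,\ell,+}\}_{n\in\cN}$ acts diagonally with respect to the disjoint projections $\{\tilde P^{(2)}_{(j-n)b+\ell}\}_n$, which reduces the vector-valued problem to a scalar estimate on each band and makes the full scale-invariant $\xi_2$-Littlewood--Paley inequality available. Calibrating the bounds so as to gain $2^{-\eps\ell}$ and absorb the $j$-summation is the heart of the argument.
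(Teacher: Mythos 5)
Your reduction to the operators $T^u_\pm$ is correct, and so is the structural observation that $\cR_{j,\ell}^{2^{nb}}$ acts diagonally on the disjoint frequency rectangles $\{|\xi_1|\sim 2^j,\ |\xi_2|\sim 2^{(j-n)b}\}$ — this is exactly what makes the special set $\{2^{nb}\}$ tractable. However, the tools you propose to close the $1<p\le 2$ case do not apply there, and this is a genuine gap. Corollary \ref{vectmultthm}, Proposition \ref{globalmultthm}, and the underlying local smoothing estimate \eqref{lsineq} are all stated and proved only for $2<p<\infty$; they cannot be ``invoked'' for $p\le 2$. Likewise, the summation over $j$ ``as at the end of \S\ref{Tuproofsect}'' relies explicitly on the embedding $\ell^2\hookrightarrow\ell^p$, which goes the wrong way for $p<2$. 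Your assertion that one obtains $\big\|\sup_n|T^{2^{nb}}_{j,\ell,+}f|\big\|_p\lc 2^{-\eps\ell}\|f\|_p$ directly in the range $1<p\le 2$ is therefore unsubstantiated — you identify the obstacle (the operators genuinely depend on $n$) but the proposed resolution (diagonality plus LP) gets you to a scalar estimate per band, and you still have no $L^p$ estimate with $\ell$-decay for $p$ near $1$.

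The paper bridges this differently. Working again from \eqref{Rell-lacLpl2l2Kh}, one decomposes the measure $\sigma_+=\sum_{m\sim 2^\ell}\mu_m$ into $\sim 2^\ell$ pieces localized on $t$-intervals of length $2^{-\ell}$, so that each piece convolves like a tensor product $\rho^{(1)}_{m,j}\otimes\rho^{(2)}_{m,j-n}$ of \emph{shifted} bump functions at shift $\sim m$. Vector-valued estimates for shifted maximal operators (Theorem~3.1 of \cite{GHLR17}) hold for all $1<p<\infty$ and give a $(\log m)^4\approx\ell^4$ loss per coordinate, which combined with the $2^{-\ell}$ smallness of each $\mu_m$ and the $\sim 2^\ell$ terms yields the bound $(1+\ell)^4\|f\|_p$ for $1<p\le 2$. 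This replaces the growth $2^{\ell(2\alpha-1/2)}$ from \eqref{Rell-lacLpl2l2Khp>1} with a polynomial bound, and interpolating this with the $L^2$ estimate $2^{-\ell/2}$ from \eqref{Rell-lacLpl2l2Khp=2} produces geometric decay in $\ell$ for the full range $1<p<2$. The interpolation step is essential to the argument and is absent from your sketch; you should not expect to see $2^{-\eps\ell}$ directly near $p=1$.
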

Here $b$ is as in the definition of the curve $\gamma_b$ in \eqref{gammabdef}.

\subsection{ \it Proof of Proposition \ref{lacthm}}
We may assume that for every interval $I_n:=[2^{nb}, 2^{(n+1)b})$, $n\in \bbZ$,  there is at most one $u\in U\cap I_n$. This is because of the lacunarity assumption we can split $U$ in $O(1)$ many sets with this assumption.

We order $U=\{u_\nu\}$ such that $u_{\nu}<u_{\nu+1}$ and  let $n(\nu)$ be the unique integer for which $u_\nu\in I_n$.

We split $H^{(u)}= S^{u}+T^u$ as in \eqref{hilbtransformdec}. In view of  Theorems \ref{Suthm}, \ref{Tuthm}  it suffices to prove the inequality
\Be\label{Tulacest} \big\|\sup_{u\in U} |T^u_\pm f| \big\|_p\lc \|f\|_p
\Ee for $4/3<p\le 2$. 
By the reduction in  \S\ref{Tuproofsect} this can be accomplished if 
\Be
\label{Rell-lacLp}
\Big \| \sup_{\nu} \big|\sum_j
\cR_{j,\ell}^{u_\nu}f\big|
\Big\|_p \lc 2^{-\ell \ep(p)} \|f\|_p
\Ee can be proved 
for $\ep(p)>0$, in our case in the range $4/3<p\le 2$.

Replacing the $\sup$ by an $\ell^2$ norm we see that 
\eqref{Rell-lacLp} follows from
\Be
\label{Rell-lacLpl2}
\Big \| \Big(\sum_{\nu} \big|\sum_j
\cR_{j,\ell}^{u_\nu}f\big|^2\Big)^{1/2}
\Big\|_p \lc 2^{-\ell \ep(p)} \|f\|_p
\Ee
Analogously  to 
\eqref{Rjln-Fourierloc} 
we have
$$\cR_{j,\ell}^{u_\nu}=
\tilde P^{(1)}_{j} 
\tilde P^{(2)}_{j-n(\nu),b} 
\cR_{j,\ell}^{u_\nu}
\tilde P^{(2)}_{j-n(\nu),b} \tilde P^{(1)}_{j} $$
and thus, by Littlewood-Paley theory, \eqref{Rell-lacLpl2} is a consequence of 
\Be
\label{Rell-lacLpl2l2}
\Big \| \Big(\sum_\nu\sum_{j\in \bbZ} \big|
\cR_{j,\ell}^{u_\nu} 
\tilde P^{(2)}_{j-n(\nu),b} \tilde P^{(1)}_{j} \!
f\big|^2\Big)^{1/2}
\Big\|_p \lc 2^{-\ell \ep(p)} \|f\|_p.
\Ee
By a standard application of Khintchine's inequality 
this estimate follows if we can prove 
\Be
\label{Rell-lacLpl2l2Kh}
\Big \| \sum_\nu\sum_{j\in \bbZ} c(\nu,j) \cR_{j,\ell}^{u_{\nu} }
\tilde P^{(2)}_{j-n(\nu),b} \tilde P^{(1)}_{j} \!
f\Big\|_p \lc 2^{-\ell \ep(p)} \|f\|_p.
\Ee
for an arbitrary choice of $\{c(\nu, j)\}$ with 
$\sup_{j,\nu} |c(\nu, j)|\le 1$.
Let $$\omega_\ell(\xi)= \om_+(2^\ell \xi ) \chi_+(|\xi|) $$ then $\omega_\ell$ and its derivatives are $O(2^{-\ell/2})$, by the symbol property of $\om_+$, and are supported on a common annulus.
We see that the $L^2$ operator norms of the individual operators 
$\cR_{j,\ell}^{u_\nu}$ are $O(2^{-\ell/2})$, and 
that 
the function 
\begin{multline*}m_\ell(\xi)= \sum_{\nu}\sum_j \tilde\chi^{(1)}(2^{-j}\xi_1)
\tilde \chi^{(2)}(2^{-jb+n(\nu)b}\xi_2) \times \\\om_\ell(2^{-j}\xi_1, 2^{(n(\nu)-j) b}\xi_2) e^{i2^\ell \Psi_+(2^{-j}\xi_1,2^{(n(\nu)-j)b}\xi_2)} 
\end{multline*}
has $L^\infty$ norm $\lc 2^{-\ell/2}$. This implies 
\Be
\label{Rell-lacLpl2l2Khp=2}
\Big \| \sum_\nu\sum_{j\in \bbZ} c(\nu,j) \cR_{j,\ell}^{u_{\nu} }
\tilde P^{(2)}_{j-n(\nu),b} \tilde P^{(1)}_{j} 
f\Big\|_2 \lc 2^{-\ell/2} \|f\|_2.
\Ee
For $p$ near $1$ we apply the Marcinkiewicz multiplier theorem in the form described in \S\ref{marcsect}. 
It is not hard to check that the multiplier $m_\ell$ satisfies the  condition  \eqref{Marccond} with constant $B\le C_\alpha 2^{\ell (2\alpha-1/2)}$. Hence we get 
\Be
\label{Rell-lacLpl2l2Khp>1}
\Big \| \sum_\nu\sum_{j\in \bbZ} c(\nu,j) \cR_{j,\ell}^{u_{\nu} }
\tilde P^{(2)}_{j-n(\nu),b} \tilde P^{(1)}_{j} \!
f\Big\|_p \lc 2^{\ell(2\alpha-\frac 12)} \|f\|_p, \quad \alpha>1/2.
\Ee
We interpolate between \eqref{Rell-lacLpl2l2Khp=2} and \eqref
{Rell-lacLpl2l2Khp>1}. By choosing $\alpha$ very close to $1/2$, we obtain \eqref{Rell-lacLpl2l2Kh} for any $p\in (4/3,2]$. \qed

\subsection{ \it Proof of Proposition \ref{lacthmspecial}} 
We argue as in the proof of Proposition \ref{lacthm}. The desired conclusion follows if under our present conditions \eqref{Rell-lacLpl2l2Khp>1} can be upgraded to
\Be
\label{Rell-lacLpl2l2Khp>1upgrade}
\Big \| \sum_\nu\sum_{j\in \bbZ} c(\nu,j) \cR_{j,\ell}^{u_{\nu} }
\tilde P^{(2)}_{j-n(\nu),b} \tilde P^{(1)}_{j} 
f\Big\|_p \le c_p (1+\ell^4) \|f\|_p, \quad 1<p\le 2.
\Ee
As now $u_\nu= 2^{n(\nu)b} $ for a strictly  increasing sequence $\{n(\nu)\}$ we see by  another application of Littlewood-Paley theory  that \eqref{Rell-lacLpl2l2Khp>1upgrade} is a consequence of the inequality
\Be
\label{Rell-lacLpl2vec}
\Big \| \Big(\sum_{n\in \bbZ}\sum_{j\in \bbZ} \big|
\cR_{j,\ell}^{2^{nb}}
f_{j,n}\big|^2\Big)^{1/2}
\Big\|_p \lc (1+\ell^4) \Big\|\Big(\sum_{j,n} |f_{j,n} |^2 \Big)^{1/2}\Big\|_p.
\Ee

This is proved  as in \cite{GHLR17} by using a superposition of shifted maximal operators, in a vector-valued setting. To analyze the situation 
we recall how $\cR_{j,\ell}^{u}$ was formed
(namely by rescaling $T_{j,\ell}^{u}$, then see \S\ref{deccomp-sect}).

Let $\sigma_+$ be as in \eqref{sigmapmdef}. Then there is a Schwartz function $\varsigma$ 
%, vanishing at the origin, 
such that
\begin{align*}
\widehat {\cR^{2^{nb}}_{j,\ell} f}(\xi) 
= \chi_+(|(2^{-j}\xi_1, 2^{nb-jb}\xi_2)|)
\widehat{\sigma_+} (2^{\ell-j}\xi_1, 2^{\ell+nb-jb} \xi_2) \widehat f(\xi)&\\+
\chi_+(|(2^{-j}\xi_1, 2^{nb-jb}\xi_2)|) \widehat{\varsigma} (2^{\ell-j}\xi_1, 2^{\ell+nb-jb} \xi_2) \widehat f(\xi)&.
\end{align*}
Consider the second (error) term. It is easy to see
that
\[
\big|\cF[ \chi_+(|(2^{-j}\xi_1, 2^{nb-jb}\xi_2)|) \widehat{\varsigma} (2^{\ell-j}\xi_1, 2^{\ell+nb-jb} \xi_2) \widehat f(\xi)](x)\big| \lc 
2^{-\ell} 
M_{\text{str} } f(x)
\]
so that these terms are taken care of by an application of the Fefferman-Stein inequality for the vector-valued strong maximal function.

We concentrate on the main term.
We write $\sigma_+=\sum_{m=2^{\ell-1}}^{2^{\ell+1} } \mu_{m}$ where the measure $\mu_m$ is given by

$$\inn{\mu_m}{f}= \int_{m2^{-\ell}}^{(m+1)2^{-\ell}} 
f(t,\gamma_b(t))\chi_+(t) \frac{dt}{t}.
$$

Define $\cR^{u}_{j,\ell,m} f$ by 
\[\widehat {\cR^{u}_{j,\ell,m} f}(\xi) 
= \chi(|(2^{-j}\xi_1, 2^{-jb}\xi_2)|)\widehat{\mu_m} (2^{\ell-j}\xi_1, 2^{\ell-jb}u \xi_2) \widehat f(\xi).\]
Then by the above discussion we have 
\[\Big | \cR^{2^{nb}}_{j, \ell} f(x)
- \sum_{m=2^{\ell-1}}^{2^{\ell+1} } \cR^{2^{nb}}_{j, \ell,m} f(x)\Big|\lc 2^{-\ell}
M_{\mathrm{str}} f(x) 
\] 
and hence, by Minkowski's inequality,  it suffices to show that 
\Be
\label{Rell-lacL1l2vecm}
\Big \| \Big(\sum_{n,j\in \bbZ} \big|
\cR_{j,\ell,m}^{2^{nb}}
f_{j,n}\big|^2\Big)^{1/2}
\Big\|_p \lc 2^{-\ell} (1+\ell)^4  \Big\|\Big(\sum_{j,n\in\Z} |f_{j,n} |^2 \Big)^{1/2}\Big\|_p
\Ee
for $2^{\ell-1}\le m\le 2^{\ell+1}$.
Notice that
\begin{multline*}|\mu_m* \cF^{-1} [\chi_+(|\cdot|2^{-\ell})](y)\big|\\
\lc 2^{-\ell}  \frac{2^\ell} {
	(1+2^\ell|y_1-m2^{-\ell}|)^{10}} 
\frac{ 2^{\ell} }
{(1+2^\ell|y_2-m^b2^{-\ell b}|)^{10}}
\end{multline*}

Now define
\begin{align*}
\rho_{m,k_1}^{(1)}(y_1)&= 2^{k_1}(1+ |2^{k_1}y_1-m|)^{-10}
\\
\rho_{m,k_2}^{(2)}(y_2)&= 2^{bk_2 }
(1+ 2^{bk_2} |y_2-m^b 2^{-\ell(b-1)} |)^{-10}
\end{align*} 
We then have the pointwise estimate
\Be \label{pointwiseshifted}
|\cR^{2^{nb}}_{j, \ell} f(x)| \lc 2^{-\ell} (\rho_{m,j}^{(1)}\otimes \rho_{m, j-n}^{(2)})*|f|.
\Ee
By an application of inequalities for the shifted maximal operators (see \cite[Theorem 3.1]{GHLR17}) we see that the expressions 
\begin{subequations} 
	\begin{align}
	&\Big(\int\Big| \Big( \sum_{k_1, k_2} \big[ \int \rho_{m,k_1}^{(1)} (x_1-y_1)  |g_{k_1,k_2} (y_1,x_2)|dy_2\Big]^2\Big)^{p/2}dx\Big)^{1/p} ,
	\notag
	\\
	&\Big(\int\Big| \Big( \sum_{k_1, k_2} \Big[ \int \rho_{m,k_2}^{(2)} (x_2-y_2)  |g_{k_1,k_2} (x_1,y_2)|dy_2\Big]^2\Big)^{p/2}dx\Big)^{1/p} 
	\notag
	\end{align}
\end{subequations}
are both bounded by a constant times
\[ (\log m)^2
\Big\|\Big(\sum_{k_1,k_2}|g_{k_1, k_2} |^2\Big)^{1/2} \Big\|_p.
\notag
\]
Applying both estimates iteratively we get 
\[
\Big\| \Big(\sum_{k_1,k_2} [ 
(\rho_{m,k_1}^{(1)}\otimes \rho_{m, k_2}^{(2)})*|g_{k_1,k_2}|]^2\Big)^{1/2} \Big\|_p \lc 
(\log m)^4
\Big\|\Big(\sum_{k_1,k_2}|g_{k_1, k_2} |^2\Big)^{1/2} \Big\|_p.
\]
We apply this with  $g_{k_1,k_2}= f_{k_1, k_1-k_2}$
and use \eqref{pointwiseshifted} to obtain 
\eqref{Rell-lacL1l2vecm}. \qed
\section{Lower bounds}\label{karagulyan-sect}

\subsection{\it The main lower bound and some consequences} The purpose of this section is  to prove 
the lower bound
\begin{thm} \label{lowerboundtheorem}
	Let $U\subset (0,\infty)$ and $1<p<\infty$.
	Then there is a constant $c_p$ such that 
	$$ \|\cH^U\|_{L^p\to L^p} \ge c_p \sqrt{\log(\fN(U))}.$$
\end{thm}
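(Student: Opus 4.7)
The plan is to prove the lower bound at $p=2$ first by adapting Karagulyan's construction \cite{Kar07} to the curved setting, then extend to the full range $1<p<\infty$ via interpolation against the upper bounds of Theorem \ref{hil-max}. The first step is a reduction to a lacunary family: given $U$ with $\mathfrak{N}(U)=N$, one picks one representative $u_j \in U \cap [2^{n_j}, 2^{n_j+1})$ from each dyadic scale meeting $U$ to obtain a $2$-lacunary subset $U_0=\{u_1<u_2<\cdots<u_N\}$ with $\#U_0 \asymp \mathfrak{N}(U)$; since $\mathcal{H}^{U_0} \le \mathcal{H}^U$ pointwise, it suffices to prove the lower bound for $U_0$.

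For the $L^2$ lower bound on $\mathcal{H}^{U_0}$, the essential geometric input is Lemma \ref{symbollemma}: the Fourier multiplier of $H^{(u)}$, modulo a Schwartz tail, is a sum of two oscillatory pieces with symbols $\omega_\pm(\xi_1,u\xi_2)e^{i\Psi_\pm(\xi_1,u\xi_2)}$ supported in sectors $\{\xi : \xi_1/(c_\pm u\xi_2) \in I_\pm\}$ whose slope scales linearly with $u$. For the lacunary family $\{u_j\}$ these critical sectors are pairwise essentially disjoint in frequency, playing the role that lacunary direction separation plays in Karagulyan's argument for Hilbert transforms along lines. Adapting that argument, one constructs a test function $f=\sum_j f_j$ as a superposition of wave packets $f_j$, each frequency-localized to the $u_j$-sector, so that $\{f_j\}$ is nearly orthogonal in $L^2$ and the distribution function of $\mathcal{H}^{U_0} f$ admits a power-type lower bound over an appropriate range of scales, yielding $\|\mathcal{H}^{U_0} f\|_2 \gtrsim \sqrt{\log N}\,\|f\|_2$ upon integration. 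Both the near-orthogonality of $\{f_j\}$ and the control of the off-diagonal contributions $H^{(u_j)} f_k$ for $k \neq j$ on the relevant region rest on the disjoint-sector geometry supplied by Lemma \ref{symbollemma}.

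To extend to the remaining values of $p$: for $p > 2$ the same Karagulyan-type construction, suitably modified, yields an $L^p$ lower bound of the same order (the wave packets are local in nature and one integrates the analogous weak-type bound at $L^p$). For $1<p<2$ one argues as in Remark~(i): if $\|\mathcal{H}^U\|_{L^{p_0}\to L^{p_0}} \le \eta\sqrt{\log\mathfrak{N}(U)}$ were to hold for some fixed $p_0 \in (1,2)$ with $\eta$ arbitrarily small as $\mathfrak{N}(U)\to\infty$, then Riesz--Thorin interpolation on linearizations of $\mathcal{H}^U$ between $L^{p_0}$ and $L^{p_1}$ for some $p_1 > 2$ (where the upper bound $\lesssim \sqrt{\log\mathfrak{N}(U)}$ comes from Theorem \ref{hil-max}) would give an $L^2$ operator norm strictly smaller than $c\sqrt{\log\mathfrak{N}(U)}$, contradicting the $L^2$ lower bound established above.

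The main obstacle lies in the wave-packet construction: the packets $f_j$ must simultaneously be near-orthogonal in $L^2$, concentrated enough for $|H^{(u_j)} f_j|$ to be sizeable on a common region of nontrivial measure, and frequency-localized enough for the off-diagonal terms $H^{(u_j)} f_k$ to be negligible there. All three requirements depend essentially on the disjoint critical-sector geometry supplied by Lemma \ref{symbollemma}, which is a feature of the curved case $b>1$; the delicate balance between concentration in physical space and localization in frequency, together with the distribution-function integration, is the main technical content of the argument.
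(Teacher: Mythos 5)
Your high-level plan — prove an $L^2$ lower bound by adapting Karagulyan, then interpolate — matches the paper's strategy, but two of your key mechanisms are wrong and a third is incomplete.

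\textbf{Wrong key lemma.} You identify Lemma~\ref{symbollemma} (the stationary-phase sector decomposition $\sigma_0=\phi_0+\mu_{0,+}+\mu_{0,-}$) as the essential geometric input, with the wave packets living in the sectors $\mathrm{Sect}_\pm$ scaled by $u_j$. This is not how the argument works, and it points in the wrong direction. The Karagulyan scheme requires that for a wave packet $f_w$ with frequency support in a chosen set $S_{\tau(w)}$, each operator $H^{(u_j)}$ acts on $f_w$ as multiplication by an almost-constant, with the constant taking one of two values depending on whether $\tau(w)\ge j$ or $\tau(w)<j$. On the stationary-phase sectors the multiplier is precisely where it is \emph{not} approximately constant — it carries the rapidly oscillating phase $e^{i\Psi_\pm}$. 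What the paper actually needs is Lemma~\ref{Hoelderlemma}: the multiplier $m(\xi_1,\xi_2)$ approaches its boundary values $-\pi i$ (as $\xi_2/\xi_1^b\to 0$) and $\rho$ (as $\xi_2/\xi_1^b\to\infty$) with a quantitative H\"older rate $\sim(\xi_2/\xi_1^b)^{1/(2b)}$. The Karagulyan frequency sets are the curved bands $S_j=\{\xi_1,\xi_2>0,\ 1/(2Ku_j)<\xi_2/\xi_1^b<1/(Ku_j)\}$ (see~\eqref{Sjdef}), chosen together with the strong separation $u_{j+1}/u_j\ge 16K^2$, $K=(C_\circ\fN(\widetilde U))^{2b}$, precisely so that on $S_{\tau(w)}$ one has $u_j\xi_2/\xi_1^b<K^{-1}$ when $\tau(w)\ge j$ (giving $m\approx-\pi i$) and $u_j\xi_2/\xi_1^b>K$ when $\tau(w)<j$ (giving $m\approx\rho$), with error $\le M^{-1}$ so small that the sum over all pairs $(w,j)$ remains $O(\|f\|_2)$.

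\textbf{Construction.} Calling the pieces "wave packets" and invoking a "distribution-function lower bound" over scales is vague and does not describe what is needed. The paper's Proposition~\ref{prop:kara} builds a binary-tree system $f=\sum_{w\in W_\mu}f_w$ of modulated, mollified indicators $f_w=\mu^{-1/2}e^{i\inn{\xi_w}{x}}\bbone_{E_w}*\phi_{\rho_w}$ with nested sets $E_{w0}\sqcup E_{w1}=E_w$ chosen according to the sign of $\cos\inn{\xi_w}{x}$; the $\sqrt{\mu}$ gain comes from the tree-system sign structure (statement~\eqref{eq:treeest}), not from a generic concentration/oscillation tradeoff. Orthogonality comes for free from disjointness of the $S_{\tau(w)}$, and the only thing required of the $S_j$ is that they contain balls of arbitrarily large radii, which the bands $S_j$ do but the unit-scale sectors of Lemma~\ref{symbollemma} do not in the relevant sense.

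\textbf{Extension beyond $p=2$.} For $p<2$ your interpolation argument is correct, but you cite the wrong upper bound for $p>2$: Theorem~\ref{hil-max} only controls $\|\cH^U\|_{L^q\to L^q}$ for $q>2$, which does \emph{not} help bracket $L^2$ when $p>2$. What the paper uses is Proposition~\ref{lacthm}, the lacunary upper bound $\|\cH^{\tilde U}\|_{L^q\to L^q}\lesssim\sqrt{\log(\#\tilde U)}$ for $4/3<q<\infty$; since the reduced set $\widetilde U$ (one representative per dyadic scale) is a union of two lacunary sets, this gives the sub-$2$ upper bound needed to interpolate for every $p\in(1,\infty)$ uniformly. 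Your proposal to run "the same Karagulyan-type construction" directly at $L^p$ for $p>2$ is not what the paper does, is unsubstantiated as written (the tree-system lower bound is tailored to $L^2$), and is unnecessary once Proposition~\ref{lacthm} is in hand.
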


\subsubsection{Some consequences}

(i) First, Theorem \ref{lowerboundtheorem} in combination with the  already proven upper bounds in Theorems \ref{Suthm} and \ref{Tuthm} yields the equivalence 
(with constants depending on $p$)
\Be\label{HUequiv} \|\cH^U\|_{L^p\to L^p} \approx \sqrt{\log (\fN(U))}
\Ee for $2 < p < \infty$, stated as Theorem \ref{hil-max}.

(ii) We also immediately get an equivalence in  Propositions \ref{lacthm} and \ref{lacthmspecial} which we formulate as
\begin{cor} Let $U$ be a lacunary set. 
	Then  \eqref{HUequiv} holds for $4/3<p<\infty$. 
	If $U$ is contained in $\{2^{nb}: n\in \bbZ\} $ then 
	\eqref{HUequiv}  holds for $1<p<\infty$.
\end{cor}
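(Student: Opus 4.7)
By Remark~(i), it suffices to establish the lower bound at a single exponent $p_0$; the general case $1<p<\infty$ then follows by interpolation against the matching upper bounds from Theorems~\ref{Suthm}--\ref{Tuthm} (for $p > 2$) and Proposition~\ref{lacthm} (for $p$ close to~$2$). Indeed, if the operator norm at some $p'\in(1,\infty)$ were smaller than $c\sqrt{\log\fN(U)}$, sublinear Marcinkiewicz interpolation against these upper bounds at an exponent on the opposite side of $p_0$ would force the norm at $p_0$ to be smaller too, contradicting the single-exponent bound. The plan is therefore to focus on the $L^2$ lower bound, following the strategy of~\cite{Kar07}.

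\emph{Reduction to lacunary sets.} Since $\fN(U)$ counts dyadic intervals $[2^n,2^{n+1})$ meeting $U$, I select one representative $u$ from $U$ in each such interval, producing a $2$-lacunary subset $U'=\{u_1<\cdots<u_N\}\subseteq U$ with $N\asymp \fN(U)$. Because $\cH^{U'}\le \cH^U$ pointwise, it is enough to prove $\|\cH^{U'}\|_{L^2\to L^2}\gtrsim \sqrt{\log N}$. The dilation identity $\cH^{vU}=\delta_{v^{-1}}^{(2)}\cH^U\delta_v^{(2)}$ further allows me to normalize $u_1\asymp 1$.

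\emph{Karagulyan-type construction.} I linearize the maximal operator: for a measurable selector $k\colon\bbR^2\to\{1,\dots,N\}$, set $\cT f(x)=H^{(u_{k(x)})}f(x)$. By duality it suffices to exhibit $f,g\in L^2$ and a selector $k$ such that $|\inn{\cT f}{g}|\gtrsim \sqrt{\log N}\,\|f\|_2\|g\|_2$. For each $k$, I would choose a bump $\phi_k$ whose Fourier transform is adapted, under the $(1,b)$-dilation of scale $u_k$, to a sector where the multiplier of $H^{(u_k)}$ carries its principal stationary-phase oscillation $e^{i\Psi_\pm(\xi_1,u_k\xi_2)}$ described in Lemma~\ref{symbollemma}. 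Lacunarity of $\{u_k\}$ forces the corresponding supports to lie in essentially disjoint anisotropic frequency shells, so Rademacher-averaged sums $f=\sum_k \eps_k\phi_k$ behave as if the pieces were nearly orthogonal. A geometric Besicovitch/Kakeya-type construction adapted to the parabola then produces, on a set of positive measure, a selector $k(x)$ at which $|H^{(u_{k(x)})}f(x)|$ picks up a logarithmic gain of order $\sqrt{\log N}$, yielding the desired $L^2$ lower bound.

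\emph{Main obstacle.} The technically delicate point is the Besicovitch-style construction adapted to parabolic curvature. In the classical directional setting of~\cite{Kar07} the relevant tubes are straight lines through the origin, and the geometry is governed by linear slopes; here one must instead align anisotropic $(1,b)$-boxes with translates of the curves $\Gamma_{u_k,b}$ and verify the near-disjointness of their $H^{(u_k)}$-images. The stationary-phase data of Lemma~\ref{symbollemma} --- namely that $\Psi_\pm$ is homogeneous of degree one with a nondegenerate Hessian on the sectors $\mathrm{Sect}_\pm$ --- is exactly what is needed to localize the log-divergent contribution of each $H^{(u_k)}\phi_k$ onto the correct set and to ensure the logarithmic gains add up across scales to the sharp rate $\sqrt{\log N}$.
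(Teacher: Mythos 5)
Your proposal overlooks that this corollary requires essentially no new argument. The equivalence \eqref{HUequiv} is obtained by pairing an upper bound with a lower bound, and both halves are already in hand: the upper bounds are precisely Propositions~\ref{lacthm} (lacunary $U$, $4/3<p<\infty$) and \ref{lacthmspecial} ($U\subset\{2^{nb}\}$, $1<p<\infty$), while the lower bound $\|\cH^U\|_{L^p\to L^p}\gtrsim\sqrt{\log\fN(U)}$ is Theorem~\ref{lowerboundtheorem}, valid for every $U\subset(0,\infty)$ and every $p\in(1,\infty)$. Combining them gives the corollary immediately. Your interpolation-and-contradiction step is a fair summary of how the lower bound is reduced to $p=2$ inside the proof of Theorem~\ref{lowerboundtheorem} (cf.\ \S\ref{p=2redux}), but it is not needed at the level of the corollary itself.

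Read instead as an attempt to reprove the $L^2$ lower bound, the sketch contains a genuine gap. You base the argument on the stationary-phase data of Lemma~\ref{symbollemma} (homogeneity and nondegenerate Hessian of $\Psi_\pm$) and on a ``Besicovitch/Kakeya-type construction adapted to the parabola,'' but neither enters the lower-bound proof. What actually drives it is the behaviour of the multiplier near the coordinate axes: by \eqref{xi2=0} and \eqref{xi1=0}, $m(\xi_1,u\xi_2)$ equals $-\pi i$ when $\xi_2=0$ and $\rho$ when $\xi_1=0$, and Lemma~\ref{Hoelderlemma} supplies H\"older control there. Hence, on the sector $S_j$ of \eqref{Sjdef} and after separating the $u_j$ as in \eqref{lacunarity}, the operator $H^{(u_{j'})}$ acts essentially by scalar multiplication by $-\pi i$ for $j'\le j$ and by $\rho$ for $j'>j$; this is a step-function transition, not an oscillatory phenomenon, and indeed one chooses the frequency supports to sit where the multiplier is nearly \emph{constant}, not where it oscillates. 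The $\sqrt{\log M}$ gain is then produced by Karagulyan's tree-system construction (Proposition~\ref{prop:kara}, Lemma~\ref{lem:kara_const}): an orthogonal decomposition $f=\sum_w f_w$ over binary words $w\in W_\mu$, with $\widehat{f_w}$ supported in the disjoint sectors $S_{\tau(w)}$ and the spatial supports $E_w$ recursively bisected according to the sign of $\cos\inn{\xi_w}{x}$, so that for a.e.\ $x$ the partial sums $\sum_{\tau(w)\ge j}F_w(x)$ accumulate without cancellation for a suitable choice of $j=j(x)$. Nothing Besicovitch-like occurs; a Kakeya-type construction is used elsewhere to show that $\cH^{[1,2]}$ fails weak $(2,2)$ (Remark~(ii)), but it does not yield the quantitative $\sqrt{\log N}$ rate over a discrete lacunary family.
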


\subsubsection{\it Reduction to the case $p=2$}
\label{p=2redux}
Let $U_*$ be a maximal subset of $U$ with the property that each interval $[2^n, 2^{n+1}]$ contains 
at most one point in $U$. Then  $\#(U_*)\approx \fN(U)$.  Let $\widetilde U$ be any finite subset of $U_*$ with the understanding that $\widetilde U=U_*$ if $U_*$ is already finite.
Clearly 
$$\|\cH^U\|_{L^p\to L^p} \ge  \|\cH^{U_*} \|_{L^p\to L^p} \ge 
\|\cH^{\tilde U}\|_{L^p\to L^p}
$$
and thus it suffices to prove the inequality 
\Be \label{Utildeineq}
\|\cH^{\tilde U}\|_{L^p\to L^p} 
\gc A_p \sqrt{\log (\#\widetilde U) }.
\Ee

We show that it suffices to prove 
\eqref{Utildeineq} for $p=2$: 
Since $\widetilde U$ is a  disjoint union of two lacunary sets we have the inequality
\Be \notag
\|\cH^{\tilde U}\|_{L^q\to L^q} \le C_q \sqrt{\log (\#\widetilde U) }, \quad \text{ for } 4/3<q<\infty,
\Ee by Proposition \ref{lacthm}.

If $1<p<2$ we pick $q$ such that $2<q<\infty$, and if $2<p<\infty$ we pick $q$ such that $4/3<q<2$.
Let $\theta\in (0,1)$ such that $(1-\theta)/p+\theta/q=1/2$.
We have
\begin{align*} A_2 \big(\log (\#\widetilde U) \big)^{1/2}
&\le \|\cH^{\tilde U}\|_{L^2\to L^2}
\le \|\cH^{\tilde U}\|_{L^p\to L^p}^{1-\theta}
\|\cH^{\tilde U}\|_{L^q\to L^q}^\theta
\\&\le \big(c_q (\log (\#\widetilde U))^{1/2} )^\theta
\|\cH^{\tilde U}\|_{L^p\to L^p}^{1-\theta}
\end{align*} which implies 
$$
\|\cH^{\tilde U}\|_{L^p\to L^p} \ge A_2^{\frac{1}{1-\theta}} c_q^{-\frac{\theta}{1-\theta}} \sqrt{\log (\#\widetilde U) }.
$$

For the remainder of this section we shall verify 
the lower bound in \eqref{Utildeineq} for $p=2$. 
We shall need to skim the set $\widetilde U$ a bit more.
To prepare for this we first  study in more detail the multipliers of the Hilbert transforms.

\subsection{\it Observations on the multipliers for the Hilbert transforms}
We may assume $c_+>0$. We write
$\widehat {H^{(u)} f} (\xi)= m(\xi_1,u\xi_2) \widehat f(\xi)$ where
\[ m(\xi_1,\xi_2) = \lim_{\substack {\eps\to 0+\\ R\to \infty} }\Big(
\int_{\eps<t\le R} 
e^{-i (t\xi_1+ c_+ t^b\xi_2)} \frac {dt}{t} +
\int_{-R<t<-\eps} 
e^{-i (t\xi_1+ c_- (-t)^b\xi_2)} \frac {dt}{t} \Big).
\]
By the homogeneity of the curve $\Gamma_b$ with respect to the dilations $(\xi_1,\xi_2) \mapsto (\lambda \xi_1, \lambda^b \xi_2)$, we see that
$m(\la\xi_1, \la^b\xi_2)=m(\xi_1,\xi_2) $ for $\la>0$.
Moreover one can check that
$m$ is continuous on $\R^2 \setminus \{0\}$, 
\begin{subequations}
	\Be \label{xi2=0}
	m(\xi_1,0) = -\pi i \,\sgn \xi_1, \quad \xi_1 \ne 0,
	\Ee
	and if $\xi_2 > 0$, then
	\Be \label{xi1=0}
	m(0,\xi_2) = \begin{cases} 
		-\frac{1}{b}\log (c_+/c_-)&\text{ if } c_- >0
		\\
		-\frac{1}{b} \log (-c_+/c_-) -\frac{1}{b}\pi i
		&\text{ if } c_- <0.
	\end{cases}
	\Ee
\end{subequations}
We shall need the following  H\"older continuity condition at the axes.
\begin{lem} \label{Hoelderlemma}
	There is   $C_\circ=C_\circ(b, c_\pm)\ge 1$  such that we have the estimates
	\begin{subequations}
		\begin{align} \label{eq:Hoelderbound-xi2}
		|m(\xi_1,\xi_2)-m (\xi_1,0)| &\le C_\circ \Big(\frac {|\xi_2|}{|\xi_1|^b} \Big)^{\frac{1}{2b}}\,,
		\\
		\label{eq:Hoelderbound-xi1}
		|m(\xi_1,\xi_2)-m (0, \xi_2)| &\le C_\circ \Big(\frac {|\xi_1|^b} {|\xi_2|}
		\Big)^{\frac{1}{2b}}\,.
		\end{align}
	\end{subequations}
\end{lem}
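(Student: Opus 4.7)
I would first reduce the two estimates to normalized forms by using the homogeneity $m(\lambda\xi_1,\lambda^b\xi_2)=m(\xi_1,\xi_2)$, together with the a priori bound $|m|\le C$. Setting $\lambda=|\xi_1|^{-1}$ reduces \eqref{eq:Hoelderbound-xi2} to showing
\[
|m(\pm1,\eta)-m(\pm1,0)|\le C|\eta|^{1/(2b)} \quad\text{for } |\eta|\le1,
\]
and setting $\lambda=|\xi_2|^{-1/b}$ reduces \eqref{eq:Hoelderbound-xi1} to
\[
|m(\zeta,\pm1)-m(0,\pm1)|\le C|\zeta|^{1/2} \quad\text{for } |\zeta|\le1,
\]
with the complementary regimes absorbed into the trivial bound $|m|\le C$. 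Combining the $t>0$ and $t<0$ parts of the defining principal value integral via $t\mapsto-t$, I would write, as a conditionally convergent integral,
\[
m(\xi_1,\xi_2)=\int_0^\infty\bigl[e^{-i(t\xi_1+c_+t^b\xi_2)}-e^{i(t\xi_1-c_-t^b\xi_2)}\bigr]\frac{dt}{t}.
\]
Subtracting the corresponding limit, each of the two normalized differences becomes a sum of two one-sided oscillatory integrals carrying a vanishing factor (of the form $e^{-ic_\pm t^b\eta}-1$ for \eqref{eq:Hoelderbound-xi2} and $e^{\mp it\zeta}-1$ for \eqref{eq:Hoelderbound-xi1}).

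For \eqref{eq:Hoelderbound-xi2} I fix $\xi_1=\pm1$ and split each one-sided integral at $T:=|\eta|^{-1/(2b)}$. The inner piece $[0,T]$ is controlled by the trivial estimate $|e^{-ic_\pm t^b\eta}-1|\le|c_\pm|t^b|\eta|$, giving a contribution of size $|\eta|T^b/b=|\eta|^{1/2}/b$, well within the target. On $[T,\infty)$ I rewrite the integrand as $e^{-i\phi(t)}/t-e^{\mp it}/t$, where $\phi(t)=\pm t+c_\pm t^b\eta$; the purely linear piece $\int_T^\infty e^{\mp it}t^{-1}dt$ is $O(1/T)=O(|\eta|^{1/(2b)})$ by a single integration by parts. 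The fully oscillatory piece $\int_T^\infty e^{-i\phi}t^{-1}dt$ is estimated by a dyadic decomposition plus van der Corput's lemma: on dyadic intervals $[2^kT,2^{k+1}T]$ where $|\phi'|$ is bounded below by a constant, the first derivative test combined with the total variation of the amplitude $1/t$ gives $O(1/(2^kT))$, whose sum telescopes to $O(1/T)$; on the critical dyadic window near the possible stationary point $t_*\simeq|\eta|^{-1/(b-1)}$ (which arises exactly when $\pm1$ and $c_\pm\eta$ have opposite signs), the second derivative test using $|\phi''(t_*)|\gtrsim|\eta|^{1/(b-1)}$ and the $1/t_*\simeq|\eta|^{1/(b-1)}$ amplitude produces a contribution of order $|\eta|^{1/(2(b-1))}$, which for $b>1$ is dominated by $|\eta|^{1/(2b)}$. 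Outside this window $|\phi'|\gtrsim t^{b-1}|\eta|$, and the first derivative test gives an even better bound.

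The proof of \eqref{eq:Hoelderbound-xi1} is easier. With $\xi_2=\pm1$ fixed, the pure-power phase is $\pm c_\pm t^b$ and the vanishing factor is $e^{\mp it\zeta}-1$. I split at $t=1$: on $[0,1]$ the pointwise bound $|e^{\mp it\zeta}-1|\le|t\zeta|$ gives a trivial $O(|\zeta|)$ contribution; on $[1,\infty)$ the phase has derivative $\ge b|c_\pm|>0$ and grows like $t^{b-1}$, and one integration by parts against this phase applied to the amplitude $\psi(t)=(e^{\mp it\zeta}-1)/t$, using $|\psi|\le|\zeta|$ and $|\psi'|\lesssim|\zeta|/t$, produces a further $O(|\zeta|)$ bound. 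Since $|\zeta|\le|\zeta|^{1/2}$ for $|\zeta|\le1$, this completes \eqref{eq:Hoelderbound-xi1} with room to spare. The main technical obstacle in the entire lemma is therefore the estimation of the truly oscillatory piece in \eqref{eq:Hoelderbound-xi2} near the possible critical point $t_*$, where one must verify that van der Corput's second derivative test, despite the degeneracy $|\phi''(t_*)|\simeq|\eta|^{1/(b-1)}$, still delivers a bound matching the stated Hölder exponent; the remaining pieces all reduce to Taylor expansion or nonstationary integration by parts.
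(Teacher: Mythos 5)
Your proposal is correct and follows essentially the same route as the paper: reduce by homogeneity and the a priori bound $|m|\lesssim 1$, write the normalized difference as one-sided oscillatory integrals carrying a vanishing factor, and then split the integration domain into a near region handled by Taylor expansion, a nonstationary region handled by the first-derivative van der Corput test (or integration by parts), and (for \eqref{eq:Hoelderbound-xi2}) a critical window near $t_*\simeq|\eta|^{-1/(b-1)}$ handled by the second-derivative test, which produces the same exponent $1/(2(b-1))$ the paper obtains. The only differences are cosmetic — your cutoff $T=|\eta|^{-1/(2b)}$ versus the paper's $A=\tfrac12|\eta|^{-1/(b+1)}$, a dyadic decomposition in place of the paper's fixed three-interval split, and a simpler split at $t=1$ in the easier estimate \eqref{eq:Hoelderbound-xi1} which in fact yields the stronger bound $O(|\zeta|)$.
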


\begin{proof}[Proof of Lemma \ref{Hoelderlemma}.]
	We have $|m(\xi_1,\xi_2)|\le C_{\circ}(b, c_\pm)$ and therefore 
	it suffices to show that \eqref {eq:Hoelderbound-xi2}
	holds for $|\xi_2| \ll |\xi_1|^b$ and 
	\eqref{eq:Hoelderbound-xi1} holds for $|\xi_1|^b\ll |\xi_2|$.
	
	For the proof of \eqref{eq:Hoelderbound-xi2} 
	it suffices to check, by homogeneity and boundedness of $m$,
	\Be \label{eq:Hoelderbound-xi2-one} 
	|m(\pm 1, \xi_2)- m(\pm 1,0)|\lc |\xi_2|^{\beta}, \quad |\xi_2|\le 1, \Ee for some  $\beta\ge (2b)^{-1}$.
	Let \Be \label{ChoiceofA} A= A(\eta)= \frac 12 |\eta|^{- \frac{1}{b+1}}.\Ee  We  have 
	\[
	m(1, \xi_2)- m(1,0)= \sum_{j=1}^3(I_{j,+}(c_+b\xi_2) - I_{j,-}(c_-b\xi_2)) \] where 
	\begin{align*} 
	I_{1,\pm}(\eta) 
	&= \int_0^{A(\eta)}
	e^{\mp it}(e^{-i t^b\eta/b}-1) \frac{dt} t\,,
	\\
	I_{2,\pm} (\eta) &=\int_{A(\eta)}^\infty
	e^{\mp it-i t^b\eta/b} \frac{dt} t\,,
	\\
	I_{3,\pm}(\eta) &= -\int_{A(\eta)}^\infty
	e^{\mp it} \frac{dt} t\,.
	\end{align*}
	
	Clearly $$|I_{1,\pm}(\eta)  |\le
	\int_0^A t^{b-1} |\eta| b^{-1} dt = A^b b^{-2}|\eta|.$$ 
	By integration by parts, 
	$$|I_{3,\pm}| \le  2 A^{-1}.$$
	By our choice \eqref{ChoiceofA} 
	$$|I_{1,\pm}(\eta) |+|I_{3,\pm}(\eta) | \lc |\eta|^{\frac{1}{b+1}}
	$$

	We may assume $|\eta|<1$. Let $B_1=B_1(\eta)= |\eta^{-1/(b-1)}|/2$ and $B_2=B_2(\eta)= 2|\eta^{-1/(b-1)}|$.
	Then $B_1(\eta)\ge A(\eta)$ and we split
	$$ 
	I_{2,\pm} (\eta) =\int_{A}^{B_1} + \int_{B_1}^{B_2} + \int_{B_2}^\infty 
	e^{i\psi(t) } t^{-1} dt
	$$
	with $\psi(t) =\mp t- t^b\eta/b$.

	Note that for $|t|\le B_1$ we have $1/2<|\psi'(t)|\le 2$ and thus, by van der Corput's lemma with first derivative we have $|\int_A^{B_1} (...) dt| \lc A^{-1}$.
	
	Note  that  $|\psi''(t)|=|\eta|(b-1) t^{b-2}$. 
	For the second integral we apply  van der Corput's lemma with second derivatives and get
	$|\int_{B_1}^{B_2} (...) dt| \lc |B_1|^{-1} |\eta|^{-1/2} (b-1)^{-1/2} |B_1|^{-(b-2)/2}\lc (b-1)^{-1/2} |\eta|^{1/(2b-2)}$.
	
	Finally for the third integral we use that 
	$|\psi'(t)|\approx |\eta|t^{b-1} $ and 
	$|\psi''(t) |\approx |\eta| (b-1) t^{b-2}$ and a straightforward  integration by parts argument yields the bound $O(|\eta|^{-1} B_2^{-b}) = 
	O(|\eta|^{\frac{1}{b-1}})$. 
	
	The estimate for 
	$m(-1,\xi_2)-m(-1,0)$ is analogous. 
	Altogether we obtain \eqref{eq:Hoelderbound-xi2-one} with $\beta=
	\min\{(b+1)^{-1}, (2b-2)^{-1}\}$, and we have $\beta\ge (2b)^{-1}$.

	We now turn to the proof of \eqref{eq:Hoelderbound-xi1}.
	It suffices to check, by homogeneity and boundedness of $m$,
	\Be \label{eq:Hoelderbound-xi1-one}
	|m(\xi_1, \pm 1)- m(0, \pm 1)|\lc 
	|\xi_1|^{1/2} , \quad |\xi_1|\le 1. \Ee
	Let \Be \label{ChoiceofB} B= B(\xi_1)= (a|\xi_1|)^{-1/2} \text{ where } a= \min_\pm (bc_\pm/2)^{\frac{2}{b-1}}.\Ee  
	
	We  have 
	\[
	m( \xi_1, 1)- m(0,1)
	= \sum_{j=1}^3\big(II_{j,+}(\xi_1) - II_{j,-}(\xi_1)\big) \] where 
	\begin{align*} 
	II_{1,\pm}(\xi_1) 
	&= \int_0^{B(\xi_1) }
	(e^{\mp it\xi_1}-1)e^{-i c_\pm t^b} \frac{dt} t \,,
	\\
	II_{2,\pm} (\xi_1) &=\int_{B(\xi_1) }^\infty
	e^{\mp it\xi_1-i c_\pm t^b} \frac{dt} t \,,
	\\
	II_{3,\pm}(\xi_1) &= -\int_{B(\xi_1) }^\infty
	e^{\mp it} \frac{dt} t\,.
	\end{align*}
	
	The estimation of these terms is  straightforward; we get
	\[ 
	|II_{1,\pm}(\xi_1)|\lc |\xi_1| B(\xi_1) 
	\] and 
	\[|II_{3,\pm}(\xi_1)|  \lc B(\xi_1)^{-1}
	\]
	and both terms are $O(|\xi|^{1/2})$, by our choice \eqref{ChoiceofB}. By this choice we also have 
	$2 \le |c_\pm|b t^{b-1}$ for $t\ge B(\xi_1)$
	which implies that for $|\xi_1|\le 1$
	$$ \frac 12|c_{\pm}| bt^{b-1} \le 
	|\partial_t (\mp t\xi_1- c_\pm t^{b})|  
	\le 2|c_{\pm}| bt^{b-1} \text{ for } t\ge B(\xi_1).
	$$
	Integration by parts now shows that
	\[
	|II_{2,\pm}(\xi_1)|  
	\lc  B(\xi_1)^{-b} 
	\]
	which is $O(|\xi_1|^{b/2})$, hence also $O(|\xi_1|^{1/2})$.
	The term 
	$m( \xi_1, -1)- m(0,-1)$ is similarly estimated.
	This completes the proof of 
	\eqref{eq:Hoelderbound-xi1-one}.
\end{proof} 

\subsection{\it Reduction to a lower bound for a lacunary maximal operator}
Recall that $\tilde U\subset U$ with 
$\fN(\tilde U)<\infty$.
Let $\fJ$ be the collection of all integers $n$ such that
$[2^n, 2^{n+1}]$ has  nonempty intersection with $\tilde U$, thus $\fN(\tilde U)=1+\#\fI$. Let
\Be \label{KUdef} K=K(\tilde U)= (C_\circ \fN(\tilde U))^{2b}
\Ee
where $C_\circ$ is as in
\eqref{eq:Hoelderbound-xi2}, \eqref{eq:Hoelderbound-xi1}.
Let $\fI'$ be a {\it maximal} subfamily of $\fI$ with the condition
\Be\label{separationinfI'} n_1\in \fI', \,\,n_2\in \fI', \,\, n_1<n_2 \,\, \implies n_2-n_1+1\ge \log_2 (8 K^2).
\Ee

Pick an integer $M$ such that $M+1$ is of the form $2^\mu$ with $\mu\in \bbN$ and such that 
$$
\frac {\fN(\tilde U)}{\log_2 (16 K^2) } =
\frac {\fN(\tilde U)}{4+4b \log_2 (C_\circ \fN(\tilde U))} \in [M,2M).
$$
We may assume that the displayed quantity is $\ge e^{100}$, so that the logarithm of this quantity is comparable to $\log M$ (otherwise the desired lower bound
for $\|\cH^U\|_{L^2\to L^2} $ just follows from the 
trivial lower bound for the  Hilbert transform along a fixed curve).

We may now pick an increasing  sequence  $\{u_j\}_{j=1}^M$ such that each $u_j$ belongs to $\tilde U$ and to  exactly one interval determined by the collection $\fI'$. Hence  we have
\Be\label{lacunarity} \frac{u_{j+1}}{u_j}\ge 16 K^2 \,.\Ee

Given the reduction in \S\ref{p=2redux} the  lower bound $\sqrt{\log (\fN(U))} $ in Theorem~\ref{lowerboundtheorem}
follows from
\begin{prop}\label{kara-para}
	Let $\tilde U$ and $\{u_j\}_{j=1}^M$ be as above.  Then there is  $c>0$ such that
	\[
	\sup_{\|f\|_2=1} \big \|\sup_{1 \leq j \leq M} |\H^{(u_j)}f| \big \|_{2} \ge c \sqrt{\log M} \,.
	\]
\end{prop}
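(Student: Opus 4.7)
We adapt the scheme of Karagulyan \cite{Kar07} for the maximal directional Hilbert transform to the curved setting, with the quantitative H\"older continuity of Lemma \ref{Hoelderlemma} playing the role of the angular separation estimates used in the straight-line case.

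\emph{Step 1: Wave-packet construction.} For each $j\in\{1,\dots,M\}$, choose a frequency point $\xi^{(j)}=(\xi_1^{(j)},\xi_2^{(j)})$ lying in the transitional region $\{(\xi_1,\xi_2)\colon|\xi_1|^b\asymp c\,u_j|\xi_2|\}$ for a fixed small constant $c$, with $|\xi^{(j)}|$ of bounded order and $\xi_2^{(j)}>0$. The spacing $u_{j+1}/u_j\ge 16K^2$ from \eqref{lacunarity} ensures the $\xi^{(j)}$ are well-separated. Let $\phi$ be a Schwartz bump with $\widehat\phi$ supported in a small neighborhood of the origin, and set $g_j(x)=\phi(x)e^{i\xi^{(j)}\cdot x}$, so that $\widehat{g_j}$ is concentrated near $\xi^{(j)}$ and the $g_j$ are mutually (near-)orthogonal.

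\emph{Step 2: Multiplier asymptotics.} Using Lemma \ref{Hoelderlemma} with $K=(C_\circ \fN(\tilde U))^{2b}$ as in \eqref{KUdef}, deduce that on the Fourier support of $g_j$,
\[
m(\xi_1,u_k\xi_2)=\begin{cases}-\pi i\,\sgn\xi_1^{(j)}+O(K^{-|k-j|/b}),& k<j,\\ \alpha_j:=m(\xi_1^{(j)},u_j\xi_2^{(j)})+O(K^{-1/b}),& k=j,\\ \mu_\pm +O(K^{-|k-j|/b}),& k>j,\end{cases}
\]
where $\mu_\pm$ is the boundary value from \eqref{xi1=0} (with the sign determined by $\sgn\xi_2^{(j)}$), and $\alpha_j\notin\{-\pi i\sgn\xi_1^{(j)},\mu_\pm\}$ since $\xi^{(j)}$ lies strictly inside the transitional region. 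Hence $H^{(u_k)}g_j\approx c_{k,j}g_j$ with a three-valued step structure in $k$, and with all error terms of total size $\ll (\fN(\tilde U))^{-1}$ by the choice of $K$.

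\emph{Step 3: Extraction of the $\sqrt{\log M}$ gain.} This is the heart of the proof, mirroring Karagulyan's argument. Build a test function $f$ as a nested Rademacher-weighted combination $f=\sum_j\epsilon_j g_j$ (possibly with further iterated structure over dyadic sub-levels of $\tilde U$). The three-valued step structure of the coefficients $c_{k,j}$ as $k$ varies (values $-\pi i\,\sgn\xi_1^{(j)}$, $\alpha_j$, $\mu_\pm$, pairwise distinct) produces for each $k$ a ``jump'' in $H^{(u_k)}f_\epsilon$ compared to $H^{(u_{k\pm 1})}f_\epsilon$ supported precisely on $g_k$. By iterating the construction across $\approx\log M$ dyadic levels of the family $\tilde U$, each level contributing a unit-size Rademacher maximal function, one stacks $\log M$ almost-orthogonal contributions. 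Integrating and applying Khintchine's inequality yields
\[ \mathbb{E}_\epsilon\|\cH^{\tilde U}f_\epsilon\|_2^2 \gtrsim (\log M)\,\mathbb{E}_\epsilon\|f_\epsilon\|_2^2, \]
so a suitable realization $\epsilon^*$ gives the desired bound.

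\emph{Main obstacle.} The delicate point is Step 3. A naive application of the three-valued step structure would give only an $O(1)$ lower bound, since the pointwise contributions from different $k$ do not automatically accumulate. The $\sqrt{\log M}$ gain requires iterating Karagulyan's construction across dyadic scales of the lacunary family, using the summably small error estimate from Lemma~\ref{Hoelderlemma} (made possible by the heavy separation $16K^2$) to ensure that cross-scale interactions do not overwhelm the main-order accumulation. Verifying that these cross-level errors remain subdominant through $\approx\log M$ iterations is the principal technical burden of the proof.
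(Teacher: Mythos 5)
Your Steps 1--2 set up a reasonable looking scheme, but the heart of the argument --- the $\sqrt{\log M}$ gain in Step 3 --- is missing, and you say as much yourself. Writing $f=\sum_j\epsilon_j g_j$ with $M$ single wave packets and Rademacher signs does not produce the gain: the quantity $\sup_k |H^{(u_k)}f|$ is then controlled by a maximum of partial sums of a Rademacher series, and Khintchine/L\'evy type bounds for such partial-sum maxima recover only the $L^2$ norm, with no logarithmic factor. The phrase \emph{``possibly with further iterated structure over dyadic sub-levels''} is exactly the construction that has to be carried out; as stated it is a placeholder. The paper's proof (via Proposition~\ref{prop:kara} and Lemma~\ref{lem:kara_const}) replaces your $M$ independent wave packets by $2^\mu-1$ functions $f_w$ indexed by a binary tree of depth $\mu$, built from nested dyadic partitions $E_w$ of $[0,1]^2$ and modulations chosen recursively so that $\{F_w=\re f_w\cdot\bbone_{E_w}\}$ forms a \emph{tree system}: for a.e.\ $x$ there is a threshold $j(x)$ so that the $\mu$ nonzero $F_w(x)$ have coherent signs across the cut at $j(x)$, giving \eqref{eq:treeest}, and together with \eqref{eq:L1Linftyest} this yields the $\sqrt{\mu}$ lower bound \eqref{eq:sqrootlower}. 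That pointwise stacking mechanism is precisely what a flat Rademacher sum cannot deliver.

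There is also a secondary gap in Step 2. You place $\xi^{(j)}$ in the transitional region $|\xi_1|^b\asymp u_j|\xi_2|$ and invoke a three-valued step structure, asserting $\alpha_j\notin\{-\pi i\,\sgn\xi_1^{(j)},\mu_\pm\}$. Lemma~\ref{Hoelderlemma} only controls $m$ near the two axes; it gives no quantitative information about $m$ in the transitional region, so the claimed separation of $\alpha_j$ from the boundary values is unjustified (and unnecessary). The paper sidesteps this entirely by choosing $S_j=\{1/(2Ku_j)<\xi_2/\xi_1^b<1/(Ku_j)\}$, which keeps every relevant frequency strictly on one side of the transition for every $u_k$: for $\tau(w)\ge j$ one has $u_j\xi_2/\xi_1^b<K^{-1}$ (so $m\approx-\pi i$ by \eqref{boundaryestxi2=0}) and for $\tau(w)<j$ one has $u_j\xi_2/\xi_1^b\ge 8K$ (so $m\approx\rho$ by \eqref{boundaryestxi1=0}), a clean two-valued picture with all errors $\le M^{-1}$. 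I'd suggest reworking the proposal around the tree/Karagulyan construction rather than a Rademacher sum, and moving the frequency supports out of the transitional region.
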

The proof of this proposition is 
based on a construction by Karagulyan \cite{Kar07}.

%
%Karagulyan's argument contains two step. The first step is about the rearrangement of functions in a ``tree-system'' (for example the Haar system). This step is independent of any operator, which is the one dimensional Hilbert transform in his case. The second step is to make use of the fact that the multiplier of the Hilbert transform along a fixed direction is constant on each half plane (determined by the direction). \\
%
%In the case of the Hilbert transform along lacunary parabolas, the argument in the first step of \cite{Kara} remains totally the same. The only difference is in the second step where the operator starts to play a role. The multiplier of the Hilbert transform along a parabola is no longer as simple as that of the Hilbert transform along a fixed direction. However, there is still some very beautiful structures that can be explored.\\ 

\subsection
{\it A theorem of Karagulyan}
We will invoke the following proposition, which is a small generalization of the main theorem of Karagulyan \cite{Kar07} (see also \cite{LMP17}). 
For ${\mu} \in \N$, let $$W_{\mu} = \{\emptyset\} \cup \bigcup_{\ell=1}^{{\mu}-1} \{0,1\}^{\ell}$$ be the set of binary words of length at most ${\mu}-1$, and let \[\tau \colon W_{\mu} \to \{1, \dots, 2^{\mu}-1\}\] be the bijection given by $\tau (\emptyset)=2^{\mu-1}$ and 
$$
\tau(w) = w_1 2^{\mu-1} + w_2 2^{\mu-2} + \dots + w_{\ell} 2^{\mu-\ell} + 2^{\mu-\ell-1}
$$
if $w = w_1 w_2 \dots w_{\ell}$ for some $\ell\in \{1,\dots,\mu-1\}$, and each $w_1, \dots, w_{\ell} \in \{0,1\}$. Observe that for a word $w$ of length $\ell$, $\tau(w)$ is divisible by $2^{\mu-\ell-1}$ but not by $2^{\mu-\ell}$.

\begin{prop} \label{prop:kara} 
	Let ${\mu}$ be any positive integer, $M = 2^{\mu} - 1$, and let $S_1, \dots, S_M$ be pairwise disjoint subsets of the (frequency) plane $\R^2$, so that every $S_j$ contains balls of arbitrarily large radii (in other words, for every $1 \leq j \leq M$ and every $R > 0$, $S_j$ contains some ball of radius~$R$). Then there exists an $L^2$ function $f$ on $\R^2$, that admits an orthogonal decomposition 
	\[
	f = \sum_{w \in W_{\mu}} f_w,
	\]
	where 
	\beq \label{eq:ortho}
	\supp \widehat{f_w} \subset S_{\tau(w)} \quad \text{for all $w \in W_{\mu}$, and}
	\endeq
	\beq \label{eq:fwLpf}
	\|f\|_{L^2}^2 = \sum_{w \in W_{\mu}} \|f_w\|_{L^2}^2 \le 2;
	\endeq
	in addition,
	\beq \label{eq:sqrootlower}
	\Big \| \sup_{1 \leq j \leq M}  \Big| \sum_{w \in W_{\mu} \colon \tau(w) \geq j} f_w \Big| \Big \|_{L^2} \ge \frac{\sqrt{\mu} }{100} \|f\|_{L^2}.
	\endeq
\end{prop}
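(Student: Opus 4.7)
The plan is to adapt Karagulyan's construction \cite{Kar07} to this abstract frequency-support setting, by building the $f_w$ as modulated wave packets whose spatial support follows the binary-tree structure of $W_\mu$ and whose frequency support is forced into the prescribed $S_{\tau(w)}$. The key structural observation is that $W_\mu$ is the node set of a complete binary tree of depth $\mu-1$ rooted at $\emptyset$, and $\tau$ is its inorder traversal labelling; consequently each superlevel set $\{w:\tau(w)\ge j\}$ decomposes as the union of specific subtrees hanging off an ancestor chain. Writing $P(\xi)=\{\xi|_0,\dots,\xi|_{\mu-1}\}$ for the root-to-leaf path associated to $\xi\in\{0,1\}^\mu$, one can choose $j_\xi$ so that $\{w:\tau(w)\ge j_\xi\}$ contains $P(\xi)$ modulo a controllable collection of extras.

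For the construction, fix $\varphi\in\mathcal S(\bbR^2)$ with $\widehat\varphi$ compactly supported in the unit ball, and for each $w\in W_\mu$ use the arbitrary-large-ball hypothesis to choose a ball $B_w=B(\xi_w,R_w)\subset S_{\tau(w)}$ of radius $R_w$ vastly larger than any other scale. Assign to $w$ a dyadic rectangle $Q_w$ in a one-directional binary subdivision of $[0,1]^2$, and set
\[
 f_w(x)\,=\,c\,\mu^{-1/2}\,\chi_w(x)\,e^{i\langle\xi_w,x\rangle},
\]
where $\chi_w$ is a smooth cutoff essentially $\bbone_{Q_w}$ with $\widehat{\chi_w}$ supported in a ball of radius negligible compared to $R_w$. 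Then $\widehat{f_w}\subset B_w\subset S_{\tau(w)}$, giving \eqref{eq:ortho}; the pairwise disjointness of the $S_j$ yields orthogonality, so $\|f\|_2^2=\sum_w\|f_w\|_2^2$, and the level-by-level tiling $\sum_{w:|w|=\ell}|Q_w|=1$ together with a suitable choice of $c$ produces $\|f\|_2^2\le 2$, i.e.\ \eqref{eq:fwLpf}.

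The heart of the argument is the lower bound \eqref{eq:sqrootlower}. For $x$ in the interior of a leaf rectangle $Q_\xi$, only the ancestors $w\in P(\xi)$ contribute non-negligibly to any $S_j(x)$, since for every other $w$ the factor $\chi_w(x)$ is small by Schwartz decay. The inorder structure lets one replace $S_{j_\xi}(x)$ by
\[
 S_{j_\xi}(x)\,\approx\,c\,\mu^{-1/2}\sum_{\ell=0}^{\mu-1} e^{i\langle\xi_{\xi|_\ell},\,x\rangle}\,+\,\text{tail}.
\]
I would then exploit the remaining freedom in the construction --- the exact position of the ball $B_w$ within $S_{\tau(w)}$ and the exact rectangle $Q_w$ --- to arrange that the $\mu$ unit phases interfere constructively on a set $E\subset[0,1]^2$ of measure bounded below from zero. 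Two natural routes: (a) a deterministic one, choosing a reference point $x_0$ at which all phases are forced equal by fine-tuning $\xi_w$ inside $B_w$, then propagating the constructive interference by continuity to a neighbourhood of $x_0$ of controlled measure; or (b) a probabilistic one, randomising the $\xi_w$ over admissible positions in $B_w$ and extracting a realisation which delivers the desired lower bound on a set of positive measure. Either way one obtains $|S_{j_\xi(x)}(x)|\gtrsim\sqrt\mu$ on $E$, and integration yields \eqref{eq:sqrootlower}.

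The main obstacle is precisely this constructive-interference step: the $\xi_w$ are prescribed to lie in disjoint and possibly geometrically arbitrary regions, so forcing the $\mu$ phases to align jointly requires a delicate simultaneous choice of the spatial subdivisions $\{Q_w\}$ and the frequency locations $\{\xi_w\}$. A secondary technical task is bounding the off-path remainder $\sum_{w\notin P(\xi),\,\tau(w)\ge j_\xi}f_w(x)$, which combines the rapid Schwartz decay of $\chi_w$ with a careful counting argument across the levels of the tree underlying $\tau$.
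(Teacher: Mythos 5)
Your set-up — wave packets $f_w(x)=c\,\mu^{-1/2}\chi_w(x)e^{i\langle\xi_w,x\rangle}$ with frequency support in huge balls $B_w\subset S_{\tau(w)}$, spatial support tiling $[0,1]^2$ along the binary tree, orthogonality from disjointness of the $S_j$ — matches the paper's construction in spirit, and the claims \eqref{eq:ortho} and \eqref{eq:fwLpf} would go through. But the mechanism you propose for the lower bound \eqref{eq:sqrootlower} is a genuine gap, and neither of your two routes closes it.

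The difficulty is that forcing the $\mu$ phases $e^{i\langle\xi_{\xi|_\ell},x\rangle}$ to interfere constructively on a set of measure bounded below is essentially impossible when the $\xi_w$ are forced to live in widely separated, arbitrary regions. Route (a) lets you fine-tune $\xi_w$ inside $B_w$ so that all $\mu$ phases agree modulo $2\pi$ at one reference point $x_0$, but the coherence neighbourhood of $x_0$ has diameter $\lesssim(\max_w|\xi_w|)^{-1}$, and the construction requires the $|\xi_w|$ (and radii $R_w$) to dwarf the spatial scale $2^{-\mu}$ of the leaf rectangles — so $|E|$ collapses as the radii grow rather than staying bounded below. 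Route (b) is worse: for a fixed $x$, randomising $\xi_w$ over a ball of radius $\gg|x|^{-1}$ makes $\langle\xi_w,x\rangle$ essentially uniform mod $2\pi$, so the sum of $\mu$ random unit phases is $O(\sqrt\mu)$ and $|S_{j_\xi}(x)|\approx\mu^{-1/2}\sqrt\mu=O(1)$, not $\gtrsim\sqrt\mu$; the probability of near-alignment at even a single $x$ is exponentially small in $\mu$, so one cannot extract a good realisation on positive measure.

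The idea the paper uses, and that your proposal is missing, is that one should \emph{not} try for constructive interference of all $\mu$ phases. Instead, the spatial splitting is \emph{adapted} to the frequency choices: $E_{w0}$ and $E_{w1}$ are defined as the subsets of $E_w$ where $\cos\langle\xi_w,x\rangle$ is nonnegative and negative, respectively, and the inorder labelling $\tau$ is precisely what makes $\tau(w_1\cdots w_\ell)\ge\tau(w(x))$ iff $w_{\ell+1}=0$ iff $\cos\langle\xi_{w_1\cdots w_\ell},x\rangle\ge 0$. Thus for every $x$ there is a single $j(x)$ for which the partial sum $\sum_{\tau(w)\ge j(x)}\mathrm{Re}\,f_w(x)$ has all terms of the same sign — the tree-system phenomenon of Niki\v sin--Ul'janov and Karagulyan — so one obtains a lower bound proportional to $\sum_w|F_w(x)|$ without any phase alignment. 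The $\sqrt\mu$ then comes from the elementary observation that on average $\int_{E_w}|\cos\langle\xi_w,x\rangle|\gtrsim|E_w|$, which is achievable by choosing $|\xi_w|$ large (Riemann--Lebesgue), and $\sum_w|E_w|=\mu$. In short: replace ``make the phases align'' with ``split the spatial support so the signs along the path are sorted by $\tau$, then let the supremum over $j$ pick out the monotone half.'' Your fixed ``one-directional binary subdivision'' gives no such coordination between $Q_w$ and the sign pattern of $\cos\langle\xi_w,\cdot\rangle$, and that is where the argument breaks.

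A secondary point: the sets $E_w$ in the paper are defined intrinsically by the sign condition and are generally not rectangles; the mollification $\bbone_{E_w}*\phi_{\rho_w}$ with $\rho_w$ very large (using the ``arbitrarily large balls'' hypothesis) replaces your smooth cutoffs $\chi_w$ and keeps the frequency support inside the prescribed ball while making the $L^2$ error against $\bbone_{E_w}$ as small as desired.
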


Accepting this for the moment, we prove Proposition~\ref{kara-para}.

\subsection{\it Proof of Proposition~\ref{kara-para}}
As before, suppose $c_+ > 0$. Let 
\[\rho=\begin{cases} - \frac{1}{b}\log(c_+/c_-) &\text{ if $c_->0$}, 
\\
-\frac{1}{b}\log (-c_+/c_-) -\frac{1}{b}\pi i 
&\text{ if $c_-<0$}. 
\end{cases}
\]
Then $m(0,\xi_2)=\rho$ for $\xi_2>0$ and $m(\xi_1,0)= -\pi i$ for $\xi_1>0$ (cf. \eqref{xi1=0}, \eqref{xi2=0}).
Let $K$ as in \eqref{KUdef}, then $$C_\circ K^{-\frac 1{2b}} \le (\fN(\tilde U))^{-1}\le M^{-1}.$$ From \eqref{eq:Hoelderbound-xi2} and 
\eqref{eq:Hoelderbound-xi1}  we see, for $\xi_1>0$, $\xi_2>0$ 

\begin{subequations}
	\begin{alignat}{2} 
	\label{boundaryestxi2=0}
	&\xi_2/\xi_1^b\le K^{-1} \,& &\implies\, |m(\xi_1,\xi_2)+\pi i| \le C_\circ K^{-\frac{1}{2b}}\le M^{-1}.
	\\
	\label{boundaryestxi1=0}
	&\xi_2/\xi_1^b\ge K \,& &\implies\, |m(\xi_1,\xi_2)-\rho| \le C_\circ K^{-\frac{1}{2b}}\le M^{-1}
	\end{alignat}
\end{subequations}

For $1 \leq j \leq M$, define 
\Be\label{Sjdef} 
S_j=\big\{ (\xi_1,\xi_2): \xi_1>0,\,\xi_2>0,\, \,
\frac{1}{2Ku_j}  <
\frac{ \xi_2}{\xi_1^b}  < \frac{1}{ Ku_j} \big\},
\Ee
so that the $S_j$ are pairwise disjoint, and contain balls of arbitrarily large radii. By Proposition~\ref{prop:kara}, there exists an $L^2$ function $f = \sum_{w \in W_{\mu}} f_w$ on $\R^2$, such that (\ref{eq:ortho}), (\ref{eq:fwLpf}) and (\ref{eq:sqrootlower}) hold. Now for $1 \leq j \leq M$, 
\begin{multline*}
|\H^{(u_j)} f(x)-\rho f(x) | 
\geq \Big | \sum_{{\substack{w \in W_{\mu} \colon \\\tau(w) \geq j}}} (\pi i+\rho)  \, f_w(x) \Big | 
\\
\quad - \Big | \sum_{\substack{w \in W_{\mu} \colon \\ \tau(w) \geq j}} \big( \H^{(u_j)}f_w(x) + \pi i  f_w(x) \big) \Big | - \Big | \sum_{{\substack{w \in W_{\mu} \colon 
			\\ \tau(w) < j}}} \big( \cH^{(u_j)}f_w(x) -\rho f_w(x)\big) \Big |,
\end{multline*}
and thus, with $c_0 = \pi(1-\frac{1}{b})$, %$c_0=\pi \min\{(b-1),1\}$, 
\begin{multline}\label{mainpartwithtwoerrors}
\sup_{1\le j\le M}
|\H^{(u_j)} f(x)-\rho f(x) | 
\geq c_0 \sup_{1\le j\le M}\Big | \sum_{{\substack{w \in W_{\mu} \colon \\\tau(w) \geq j}}}  f_w(x) \Big | 
\\
\quad - \sup_{1\le j\le M}
\Big | \sum_{\substack{w \in W_{\mu} \colon \\ \tau(w) \geq j}} ( \H^{(u_j)}+\pi i)f_w(x)  \Big | - \sup_{1\le j\le M} \Big | \sum_{{\substack{w \in W_{\mu} \colon 
			\\ \tau(w) < j}}} ( \cH^{(u_j)}-\rho )f_w(x) \big) 
\Big|.
\end{multline}

Now $\supp \widehat {f_w} \in S_{\tau(w)}$. If $\tau(w)\ge j$, then for $\xi\in \supp \widehat {f_w }$, we have $u_j\xi_2/\xi_1^b< u_{\tau(w)} \xi_2/\xi_1^b<K^{-1}$ and therefore, by \eqref{boundaryestxi2=0}, we have
$|m(\xi_1, u_j\xi_2)+\pi i| \le M^{-1}$ for $\xi\in \supp \widehat {f_w }$. Hence

\Be\label{tauwgejerror}
\big\| (\H^{(u_j)} + \pi i) f_w \big \|_{2} \leq M^{-1}  \|f_w\|_{2} \quad \text{if $\tau(w) \geq j$}.\Ee

Moreover if $\tau(w)<j$ we have, for $\xi\in \supp \widehat {f_w }$,
\[
u_j \frac{\xi_2}{\xi_1^b} 
= \frac{u_j}{u_{\tau(w)}}u_{\tau(w) }\frac{\xi_2}{\xi_1^b} \ge  16 K^2 \frac 1{2K}= 8K
\]
and hence, by \eqref{boundaryestxi1=0},
$|m(\xi_1, u_j \xi_2)-\rho|\le M^{-1}$
for $\xi\in \supp \widehat {f_w }$.  Thus 
\Be
\label{tauw<jerror}
\big \| (\H^{(u_j)} -\rho) f_w \big \|_{2} \leq M^{-1} \|f_w\|_{2} \quad \text{if $\tau(w) < j$}.
\Ee
Statements \eqref{tauwgejerror}
and \eqref{tauw<jerror} 
imply 
\begin{align}
\label{sumtauwgejerror}
&\Big\| \sup_{1 \leq j \leq M} \Big | \sum_{\substack{w \in W_{\mu} \colon \\\tau(w) < j}} (\H^{(u_j)}-\rho) f_w \Big | \Big \|_{2}  \lc \|f\|_2
\\
\label{sumtauw<jerror} 
&\Big \| \sup_{1 \leq j \leq M} \Big | 
\sum_{\substack{w \in W_{\mu} \colon \\ \tau(w) \geq j}} ( \H^{(u_j)} + \pi i ) f_w \Big | \Big \|_{2} \lesssim \|f\|_{2}.
\end{align} 
Indeed, to obtain \eqref{sumtauw<jerror} we use 
the Cauchy-Schwarz inequality  in the $w$ sum and replace a sup in $j$ by an $\ell^2$ norm, then interchange integrals and sums and apply  \eqref{tauw<jerror} to get
\[
\begin{split}
&\Big\| \sup_{1 \leq j \leq M} \Big | \sum_{\substack{w \in W_{\mu} \colon \\\tau(w) < j}} (\H^{(u_j)}-\rho) f_w \Big | \Big \|_{2} 
\\ &\leq  M^{1/2} \Big \| \Big( \sum_{j=1}^M \sum_{\tau(w) < j} \ | (\H^{(u_j)}-\rho)f_w  |^2 \Big)^{1/2} \Big \|_{2} \\
&= M^{1/2} \Big( \sum_{j=1}^M \sum_{\tau(w) < j} \big \| (\H^{(u_j)}-\rho) f_w \big \|_{2}^2 \Big)^{1/2}\\
&\leq  M^{1/2} \Big( \sum_{j=1}^M M^{-2}  \sum_{w }  \| f_w  \|_{2}^2 \Big)^{1/2} 
\lesssim  \|f\|_{2}
\end{split}
\]
(the last line following from (\ref{eq:fwLpf})).
Inequality \eqref{sumtauwgejerror} is proved in exactly the same way (relying on 
\eqref{tauwgejerror}).

Now we go back to \eqref{mainpartwithtwoerrors}, use 
(\ref{eq:sqrootlower}) for the main part and 
\eqref{sumtauwgejerror}, \eqref{sumtauw<jerror} for the two error terms. Then we get  
\[
\Big \| \sup_{1 \leq j \leq M} |(\H^{(u_j)}-\rho) f| \Big \|_{2} \geq c \sqrt{\mu} \|f\|_{2}
\]
for some  constant $c= c(b, c_\pm)> 0$. 
If $\sqrt{\mu} \ge 2|\rho| / c$ this also implies 
\[
\Big \| \sup_{1 \leq j \leq M} |\H^{(u_j)} f| \Big \|_{2} \geq (c/2)  \sqrt{\mu} \|f\|_{2}.
\]
This completes the proof
of Proposition~\ref{kara-para}, except for  
Proposition~\ref{prop:kara}. \qed

\subsection{ \it Proof of Proposition~\ref{prop:kara}}

Fix a non-negative Schwartz function $\phi$ on $\R^2$ with $\int_{\R^2} \phi(x) dx = 1$, such that $\widehat{\phi}$ is supported in the unit ball $B(0,1)$ centered at the origin. Define the frequency cutoff $\phi_\rho$ by  $$\phi_{\rho}(x) := \rho^2 \phi(\rho x).$$ Then $\widehat{\phi_\rho}$ is supported on $B(0,\rho)$. 

The following lemma explains what we actually construct, in order to prove Proposition~\ref{prop:kara}:

\begin{lem} \label{lem:kara_const} 
	Let ${\mu} \in \N$, $M = 2^{\mu} - 1$, and let $S_1, \dots, S_M$ be as given in Proposition~\ref{prop:kara}. Then there exist a sequence of sets $\{E_w\}_{w \in W_{\mu}}$, modulation frequencies $\{\xi_w\}_{w \in W_{\mu}} \subset \R^2$, and 
radii $\{\rho_w\}_{w\in W_\mu}$	 
	 such that the following holds:
	\begin{enumerate}[(a)]
		\item For every $w \in W_{\mu}$, $E_w \subset [0,1]^2$, and for every $w \in W_{\mu-1}$, $E_w$ is the disjoint union of $E_{w0}$ and $E_{w1}$ 
Also, $E_{\emptyset} = [0,1]^2$. For $\ell=0,\dots, \mu-1$, $[0,1]^2$ is a disjoint union of the $E_w$ with $\text{length}(w)=\ell$, and 
		\beq \label{eq:sumEw}
		\sum_{w \in W_{\mu}} \bbone_{E_w}(x) = \mu.
		\endeq
		for every  $x \in [0,1]^2$.
		\item For every $w \in W_{\mu}$, 
		\begin{align} \label{eq:lwchoice}
		\|\bbone_{E_w}*\phi_{\rho_w} - \bbone_{E_w} \|_{L^2} &\leq 2^{-\mu-10}, \\ \label{eq:coslowerbdd}
		\int_{E_w} |\cos(\inn{\xi_w}{ x})| dx &\geq \frac{|E_w|}{3}, 
		\end{align} 
		\Be
		\label{eq:balllocation}
		B(\xi_w,\rho_w) \subset S_{\tau(w)}.
		\Ee
		\item For every $w \in W_{\mu-1}$, we have
		\Be \label{eq:cossign}
		\begin{aligned}
		\cos(\inn{\xi_w}{x}) &\ge  0 \quad \text{if $x \in E_{w0}$}, \\
		\cos(\inn{\xi_w}{x}) &<  0 \quad \text{if $x \in E_{w1}$}.
		\end{aligned}
		\Ee
	\end{enumerate}
\end{lem}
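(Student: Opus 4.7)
\textbf{Proof plan for Lemma \ref{lem:kara_const}.} The construction proceeds by induction on the length $\ell$ of the word $w$, starting from $E_\emptyset = [0,1]^2$ and continuing through $\ell = 0, 1, \ldots, \mu-1$. At each length $\ell$, after the sets $E_w$ with $|w| = \ell$ have been produced (by the sign rule (\ref{eq:cossign}) applied to the parent's cosine), I select $\xi_w \in \bbR^2$ and $\rho_w > 0$ satisfying (\ref{eq:lwchoice}), (\ref{eq:coslowerbdd}), (\ref{eq:balllocation}); if $\ell < \mu - 1$ I then split $E_w$ into $E_{w0}$ and $E_{w1}$ according to the sign of $\cos(\inn{\xi_w}{\cdot})$, which supplies (c) and the required disjoint-union structure of (a). The identity (\ref{eq:sumEw}) is then automatic: for each length $\ell \in \{0, \ldots, \mu-1\}$ the sets $\{E_w : |w| = \ell\}$ partition $[0,1]^2$ and so contribute $1$ pointwise, giving a total equal to the number of levels, namely $\mu$.

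\textbf{Choice of parameters at the inductive step.} Suppose $E_w$ has positive measure. Since $\phi_{\rho}(x) = \rho^2 \phi(\rho x)$ is an $L^1$-normalized approximate identity as $\rho \to \infty$, I first fix $\rho_w$ (depending on $E_w$) large enough that $\|\bbone_{E_w} * \phi_{\rho_w} - \bbone_{E_w}\|_2 \le 2^{-\mu - 10}$, achieving (\ref{eq:lwchoice}). Using the hypothesis that $S_{\tau(w)}$ contains balls of arbitrarily large radius, I place a ball of radius $\rho_w$ inside $S_{\tau(w)}$. Since $S_{\tau(w)}$ is unbounded (it contains balls of every radius, hence extends to infinity), I then translate this ball along a direction lying inside $S_{\tau(w)}$ so that its center $\xi_w$ attains any prescribed magnitude, while preserving $B(\xi_w, \rho_w) \subset S_{\tau(w)}$. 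This simultaneously secures (\ref{eq:balllocation}) and leaves me free to take $|\xi_w|$ as large as I wish.

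\textbf{The cosine estimate and the main obstacle.} For (\ref{eq:coslowerbdd}) I use the elementary bound $|\cos t| \ge \cos^2 t = \tfrac12(1 + \cos(2t))$, which gives
\[
\int_{E_w} |\cos(\inn{\xi_w}{x})|\,dx \ge \tfrac12 |E_w| + \tfrac12\, \re \widehat{\bbone_{E_w}}(-2\xi_w).
\]
Since $\bbone_{E_w} \in L^1$, the Riemann--Lebesgue lemma makes the second term tend to $0$ as $|\xi_w| \to \infty$, so for $|\xi_w|$ sufficiently large the right-hand side exceeds $|E_w|/3$. The same argument shows that $|\{x \in E_w : \cos(\inn{\xi_w}{x}) \ge 0\}|$ and its complement in $E_w$ both tend to $\tfrac12|E_w|$ as $|\xi_w| \to \infty$; in particular $|E_{w0}|, |E_{w1}| > 0$, which is needed for the induction to continue. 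The main technical hurdle is the simultaneous satisfaction of (\ref{eq:lwchoice}), (\ref{eq:coslowerbdd}) and (\ref{eq:balllocation}): $\rho_w$ must be large enough for the $L^2$-approximation of $\bbone_{E_w}$, and $|\xi_w|$ must be even larger to force the oscillation estimate and a nondegenerate split, all while keeping $B(\xi_w, \rho_w)$ inside $S_{\tau(w)}$. The geometric hypothesis on the $S_j$ is exactly what makes this juggling possible, and with it the induction closes.
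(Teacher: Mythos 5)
Your overall plan coincides with the paper's proof: induction on word length starting from $E_\emptyset=[0,1]^2$, splitting $E_w$ into $E_{w0},E_{w1}$ by the sign of $\cos(\inn{\xi_w}{\cdot})$, the $|\cos t|\ge\cos^2 t$ trick together with Riemann--Lebesgue for \eqref{eq:coslowerbdd}, and the observation that \eqref{eq:sumEw} is automatic because the level-$\ell$ sets partition $[0,1]^2$. That part is fine.

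There is, however, a real gap in the parameter-selection step. You first fix $\rho_w$ large enough for \eqref{eq:lwchoice}, place a ball of radius $\rho_w$ inside $S_{\tau(w)}$, and then claim you may ``translate this ball along a direction lying inside $S_{\tau(w)}$ so that its center $\xi_w$ attains any prescribed magnitude, while preserving $B(\xi_w,\rho_w)\subset S_{\tau(w)}$.'' The hypothesis on $S_{\tau(w)}$ is only that it contains balls of arbitrarily large radius; this neither implies translation-invariance in some direction nor even connectedness, so the translation step is unjustified (and even for the concrete sectors $S_j$ of \eqref{Sjdef} used downstream, translation does not preserve membership). The correct argument, which the paper isolates in Lemma~\ref{basicstep}, chooses the \emph{radius} last: take $R_1$ so that $\|\phi_\rho*\bbone_{E_w}-\bbone_{E_w}\|_2<2^{-\mu-10}$ for all $\rho\ge R_1$, take $R_2$ from Riemann--Lebesgue so that $\int_{E_w}|\cos(\inn{\xi}{x})|\,dx\ge |E_w|/3$ for all $|\xi|\ge R_2$, and then pick a single ball $B(\Xi_0,R_0)\subset S_{\tau(w)}$ with $R_0\ge 10\max(R_1,R_2)$. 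Since the ball $B(\Xi_0,R_0/2)$ has diameter $R_0$ it cannot lie inside $B(0,R_0/4)$, so it contains some $\xi_w$ with $|\xi_w|\ge R_0/4\ge R_2$; setting $\rho_w=R_0/4\ge R_1$ gives $B(\xi_w,\rho_w)\subset B(\Xi_0,3R_0/4)\subset S_{\tau(w)}$, and all three of \eqref{eq:lwchoice}, \eqref{eq:coslowerbdd}, \eqref{eq:balllocation} hold simultaneously. This single-ball argument is what your translation step should be replaced by; the rest of your construction then goes through.
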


With this lemma we can prove Proposition~\ref{prop:kara} as follows.

\begin{proof}[Proof of Proposition~\ref{prop:kara}]
	For every $w \in W_{\mu}$, let $E_w$, $\rho_w$ and $\xi_w$ be as in Lemma~\ref{lem:kara_const}. We set
	\begin{subequations}
	\beq \label{eq:fwdef}
	f_w(x) := \mu^{-1/2} e^{i \inn{\xi_w}{x}} \bbone_{E_w}*\phi_{\rho_w}(x),
	\endeq
	and let \Be\label{f=sumfw}f := \sum_{w \in W_{\mu}} f_w.\Ee
	\end{subequations}
	Then the support of $\widehat{f_w}$ is contained inside $B(\xi_w,\rho_w)$, so (\ref{eq:ortho}) follows from (\ref{eq:balllocation}). 
	Also, the $\widehat{f_w}$'s are supported in the sets $S_{\tau(w)}$ which are disjoint and thus  by orthogonality we have
	\[
	\|f\|_{2} = \Big \| \Big( \sum_{w \in W_{\mu}} |f_w|^2 \Big)^{1/2} \Big\|_{2}.
	\]
	But, from (\ref{eq:lwchoice}), we have
	\beq \label{eq:fwEwapprox}
	\Big \| f_w - \mu^{-1/2} e^{i \inn{\xi_w}{x}} \bbone_{E_w} \Big \|_{2} \leq 2^{-\mu-10}.
	\endeq
	Observe
	\begin{multline*}
	\Big( \sum_{w \in W_{\mu}} |f_w|^2 \Big)^{1/2} 
	\\ \quad\leq  \Big( \sum_{w \in W_{\mu}} \Big|f_w - \mu^{-1/2} e^{i \inn{\xi_w}{x}} \bbone_{E_w} \Big|^2 \Big)^{1/2} + \Big( \sum_{w \in W_{\mu}} \Big|\mu^{-1/2} e^{i \inn{\xi_w}{x}} \bbone_{E_w} \Big|^2 \Big)^{1/2},
	\end{multline*}
	and  using (\ref{eq:sumEw}) to simplify the second term we get
	\[
	\Big( \sum_{w \in W_{\mu}} |f_w|^2 \Big)^{1/2} 
	\leq \Big(\sum_{w \in W_{\mu}} \Big|f_w - \mu^{-1/2} e^{i \inn{\xi_w}{x}} \bbone_{E_w} \Big|^2\Big)^{1/2}  + \bbone_{[0,1]^2}
	\]
	for almost every $x \in \R^2$. Taking $L^2$ norms of both sides, and using (\ref{eq:fwEwapprox}), we have 
	\[
	\Big \| \Big( \sum_{w \in W_{\mu}} |f_w|^2 \Big)^{1/2} \Big\|_{2} \leq 2^{-5-\mu/2}+1 <2.
	\]
	Thus (\ref{eq:fwLpf}) follows. 
	
	Lastly we have to verify (\ref{eq:sqrootlower}). To do so, we first introduce an auxiliary family of functions $\{F_w\}_{w \in W_{\mu}}$, where
	\beq \label{eq:Fwdef}
	F_w := \re f_w \,\bbone_{E_w}.
	\endeq
	These $F_w$'s satisfy three key properties, namely 
	\beq \label{eq:Fwfwapproxsum}
	\sum_{w \in W_{\mu}} \|F_w - \re f_w\|_{L^2} \le 2^{-10},
	\endeq
	\beq \label{eq:treeest}
	\frac 13 \,\le \,
	\frac{ 
	\sup_{1 \leq j \leq M} \Big| \sum_{w \in W_{\mu} \colon \tau(w) \geq j} F_w(x) \Big | }
	{ \sum_{w \in W_{\mu}} |F_w(x)| } \,\le 1 \quad \text{for a.e. $x \in [0,1]^2$},
	\endeq
	and
	\beq \label{eq:L1Linftyest}
	\frac{\sqrt\mu}{4}\le \Big \| \sum_{w \in W_{\mu}} |F_w| \Big\|_{1} \le 
	\Big \| \sum_{w \in W_{\mu}} |F_w| \Big\|_{2} \le 
	\Big \| \sum_{w \in W_{\mu}} |F_w| \Big \|_{{\infty}} \le 
	\sqrt{\mu}.
	\endeq
	Indeed, (\ref{eq:Fwfwapproxsum}) will be a consequence of  
	\beq \label{eq:Fwfwapprox}
	\|F_w - \re f_w\|_{L^2} \lesssim 2^{-\mu-10} \quad \text{for all $w \in W_{\mu}$}.
	\endeq 
	Since $F_w - \re f_w = \re f_w \bbone_{\R^2 \setminus E_w}$, heuristically, (\ref{eq:Fwfwapprox}) says that the real part of each $f_w$ is essentially supported on $E_w$: the $L^2$ norm of $\re f_w$ outside $E_w$ is small. Furthermore, (\ref{eq:treeest}) says that there isn't much cancellation, if we first order the $F_w$'s according to the value of $\tau(w)$, and then sum successively; this will be achieved by showing that $\{F_w\}_{w \in W_{\mu}}$ form a \emph{tree system} in the sense of Karagulyan \cite{Kar07} (who credits the idea to Niki\v sin and Ul'janov \cite{NikUly}). 
	
	Let us now establish the three key properties of the $F_w$'s, namely (\ref{eq:Fwfwapproxsum}), (\ref{eq:treeest}) and (\ref{eq:L1Linftyest}). Since $F_w - \re f_w = \re f_w \bbone_{(E_w)^\complement}$, and since
	\beq \label{eq:refw}
	\re f_w(x) = \frac{1}{\sqrt{\mu}} \cos(\inn{\xi_w}{x}) \bbone_{E_w}*\phi_{\ell_w}(x),
	\endeq
	we have
	\begin{align*}
	\|F_w - \re f_w\|_{L^2(\R^2)} &= \big\| \mu^{-1/2} \cos(\inn{\xi_w}{x}) \bbone_{E_w}*\phi_{\ell_w} \big\|_{L^2(\R^2 \setminus E_w)} 
	\\
	&\leq \big\| \bbone_{E_w} - \bbone_{E_w}*\phi_{\ell_w} \big\|_{L^2(\R^2\setminus E_w)} \le 2^{-\mu-10}
	\end{align*}
	by (\ref{eq:lwchoice}). This establishes (\ref{eq:Fwfwapprox}), and (\ref{eq:Fwfwapproxsum}) follows by summing over $w \in W_{\mu}$. 
	
	Next we verify \eqref{eq:treeest}. 
	The second inequality in \eqref{eq:treeest} is immediate by the triangle inequality. For the first, we observe from (\ref{eq:refw}) that if $x \in E_w$, then $F_w(x)$ has the same sign as $\cos(\inn{\xi_w}{x})$ since $\bbone_{E_w}*\phi_{\ell_w}$ is everywhere positive. We claim that for almost every $x \in [0,1]^2$, there exists $j=j(x)$ such that $F_w(x) \geq 0$ for every $w \in W_{\mu}$ with $\tau(w) \geq j$, and $F_w(x) < 0$ for every $w \in W_{\mu}$ with $\tau(w) < j$. This is because for almost every $x \in [0,1]^2$, there exists a unique word $w(x) = w_1 \dots w_{\mu-1}$ of length $\mu-1$ such that $x \in E_{w(x)}$. By (\ref{eq:cossign}), it follows that, for every $\ell = 0, 1, \dots, \mu-2$,
	\begin{align*} 
	F_{w_1 \dots w_{\ell}}(x) 
	&	> 0 \quad \text{if $w_{\ell+1} = 0$}, \\
	F_{w_1 \dots w_{\ell}}(x) 
	&< 0 \quad \text{if $w_{\ell+1} = 1$},
	\end{align*}
	and that $F_{w'}(x) = 0$ if $w' \in W_{\mu} \setminus \{\emptyset, w_1, w_1 w_2, \dots, w_1 \cdots w_{\mu-1}\}$. But  
	$$
	\tau(w_1 \dots w_{\ell}) = w_1 2^{\mu-1} + \dots + w_{\ell} 2^{\mu-\ell} + 2^{\mu-\ell-1},
	$$
	while 
	$$
	\tau(w(x)) = w_1 2^{\mu-1} + \dots + w_{\ell} 2^{\mu-\ell} + w_{\ell+1} 2^{\mu-\ell-1} + \dots + w_{\mu-1} 2^1 + 2^0.
	$$
	This shows that for every $\ell = 0, 1, \dots, \mu-2$,
	\begin{align*}
	\tau(w_1 \dots w_{\ell}) 
	&	> \tau(w(x)) \quad \text{if $w_{\ell+1} = 0$}, 
	\\
	\tau(w_1 \dots w_{\ell}) &< \tau(w(x)) \quad \text{if $w_{\ell+1} = 1$}.
	\end{align*}
	Thus for any $w' \in W_{\mu}$, one has
	\begin{align*}
	F_{w'}(x) 
	&\geq 0 \quad \text{if $\tau(w') > \tau(w(x))$}, \\
	F_{w'}(x) &\leq 0 \quad \text{if $\tau(w') < \tau(w(x))$}.
	\end{align*}
	If $F_{w(x)}(x) \ge  0$, we set $j(x) = \tau(w(x))$; if $F_{w(x)}(x) < 0$, we set $j(x) = \tau(w(x)) + 1$. It follows that that $F_w(x) \geq 0$ whenever $\tau(w) \geq j(x)$, and $F_w(x) \leq 0$ whenever $\tau(w) <  j(x)$. 
We distinguish two cases now. In the first case we have 
\[ \Big|  \sum_{w \in W_{\mu} \colon \tau(w) \geq j(x)} F_w(x) \Big| \geq \frac{1}{3} \sum_{w \in W_{\mu}} |F_w(x)|.	\]
	In the opposite case, we have 
	$|  \sum_{w \in W_{\mu} \colon \tau(w) \geq j(x)} F_w(x) | 
	< \frac{1}{3} \sum_{w \in W_{\mu}} |F_w(x)|$, so $|  \sum_{w \in W_{\mu} \colon \tau(w) < j(x)} F_w(x) | \geq \frac{2}{3} \sum_{w \in W_{\mu}} |F_w(x)|$.  Then
\begin{align*}  &\Big|  \sum_{w \in W_{\mu}} F_w(x) \Big| \geq 
	\Big|  \sum_{\substack{w \in W_{\mu} \colon\\ \tau(w)<j(x)}} F_w(x) \Big| 
	- \Big|  \sum_{\substack{w \in W_{\mu} \colon \\\tau(w) \geq j(x)}} F_w(x)\Big |
	\\
	 &\ge 
	  \sum_{\substack{w \in W_{\mu} \colon\\ \tau(w)<j(x)}} |F_w(x)| 
	- \frac{1}{3} \sum_{w \in W_{\mu}} |F_w(x)|
	\ge 
	\frac{1}{3} \sum_{w \in W_{\mu}} |F_w(x)|.	
	\end{align*}
	Hence in both cases 
	$$
	\sup_{1 \leq j \leq M} \Big| \sum_{w \in W_{\mu} \colon \tau(w) \geq j} F_w(x) \Big| \geq \frac{1}{3} \sum_{w \in W_{\mu}} |F_w(x)|
	$$
	for every $x \in [0,1]^2$. 
	This completes the proof of (\ref{eq:treeest}).
	
	Finally, we have to verify (\ref{eq:L1Linftyest}). Note that $F_w$ is supported on $[0,1]^2$ for every $w \in W_{\mu}$, and for almost every $x \in [0,1]^2$, there exists at most $\mu$ words $w \in W_{\mu}$ for which $F_w(x) \ne 0$. Furthermore, $|F_w(x)| \leq \mu^{-1/2}$ for every $x \in [0,1]^2$ and every $w \in W_{\mu}$. Thus, we have
	\[
	\Big \| \sum_{w \in W_{\mu}} |F_w| \Big \|_{1} \leq
	\Big \| \sum_{w \in W_{\mu}} |F_w| \Big \|_{2} \leq
	\Big \| \sum_{w \in W_{\mu}} |F_w| \Big\|_{{\infty}} \leq \sqrt{\mu}.
	\] 
	Next, for the lower bound,
	\[
	\Big \| \sum_{w \in W_{\mu}} |F_w| \Big \|_{1} 
	= \sum_{w \in W_{\mu}} \int_{E_w} \mu^{-1/2}|\cos(\inn{\xi_w }{x}) \bbone_{E_w}*\phi_{\ell_w}(x)| dx 
	\]
	which is
	\[
	\begin{split}
	&\ge\frac{1}{\sqrt\mu}\sum_{w \in W_{\mu}} 
	 \int_{E_w} \Big( |\cos(\inn{\xi_w}{x})|  -  
	 \big|\cos(\inn{\xi_w}{x})[\bbone_{E_w} - \bbone_{E_w}*\phi_{\ell_w}]\big|\Big)\,dx \\
	&\geq \frac{1}{\sqrt\mu}\sum_{w \in W_{\mu}} \Big(\int_{E_w}  |\cos(\inn{\xi_w}{x})| dx - \|\bbone_{E_w} - \bbone_{E_w}*\phi_{\ell_w}\|_{L^2}
	|E_w|^{1/2}\Big)\\
	&\ge \frac{1}{\sqrt{\mu}} \sum_{w \in W_{\mu}}\Big(\frac{ |E_w|}{3}  - 2^{-\mu-10} \Big)
	\ge \frac{\sqrt{\mu}}{3} - 2^{-\mu-10}\sqrt{\mu}\ge 
	\frac{\sqrt\mu}{4},
	\end{split}
	\]
	where for the last line  we have  used  (\ref{eq:coslowerbdd}), (\ref{eq:lwchoice}) and (\ref{eq:sumEw}).
	This completes the proof of (\ref{eq:L1Linftyest}).
	
	We will now return to the proof of (\ref{eq:sqrootlower}). First,
	\[
	\begin{split}
	\sup_{1 \leq j \leq M} \Big | \sum_{w \in W_{\mu} \colon \tau(w) \geq j} f_w(x) \Big | 
	\geq \sup_{1 \leq j \leq M} \Big | \sum_{w \in W_{\mu} \colon \tau(w) \geq j} \re f_w(x) \Big | &\\
	\geq \sup_{1 \leq j \leq M} \Big | \sum_{w \in W_{\mu} \colon \tau(w) \geq j} F_w(x) \Big | - \sum_{w \in W_{\mu}} |F_w(x) - \re f_w(x)|&,
	\end{split}
	\]
	which by (\ref{eq:treeest}) is 
	$$
	\geq \frac 13 \sum_{w \in W_{\mu}} |F_w(x)| - \sum_{w \in W_{\mu}} |F_w(x) - \re f_w(x)|.
	$$
	From (\ref{eq:Fwfwapproxsum}) and (\ref{eq:L1Linftyest}), we then have
	$$
	\Big \| \sup_{1 \leq j \leq M} \Big | \sum_{w \in W_{\mu} \colon \tau(w) \geq j} f_w \Big | \Big \|_{L^2} \ge\frac{ \sqrt{\mu}}{12}- 2^{-10} \ge \frac{\sqrt{\mu}}{50}.
	$$
	Hence (\ref{eq:sqrootlower}) follows from (\ref{eq:fwLpf}). This finishes the proof of Proposition  \ref{prop:kara}, except for the proof of 
	Lemma~\ref{lem:kara_const}.
\end{proof}

The proof of Lemma \ref{lem:kara_const} 
is done by induction over the length of words.
The basic step is contained in 
\begin{lem}\label{basicstep}
Given $\eps>0$,  a set $E$ of finite measure and 
a set $S$ in frequency space that contains balls of arbitrary large radii, there exist $\rho_0>0$, a frequency $\xi_0$ and a ball $B= B(\xi_0,\rho_0) \subset S$ such that
 $\|\phi_{\rho_0}*\bbone_E-\bbone_E\|_2<\eps$ and 
$\int_E|\cos(\inn{\xi_0} x)|\, dx\ge |E|/3$.
\end{lem}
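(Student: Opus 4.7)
The plan is to handle the two conditions independently and then combine them using the richness hypothesis on $S$.

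First, I would choose $\rho_0$ to satisfy the approximation condition. Since $\phi \in \mathcal S(\R^2)$ with $\int \phi = 1$, the family $\{\phi_\rho\}_{\rho>0}$ is a standard approximate identity, so $\phi_\rho * g \to g$ in $L^2$ as $\rho\to\infty$ for every $g \in L^2(\R^2)$. Applied to $g = \bbone_E$ (which lies in $L^2$ since $|E|<\infty$), this gives a threshold $\rho^\ast > 0$ so that $\|\phi_{\rho_0}*\bbone_E - \bbone_E\|_2 < \eps$ whenever $\rho_0 \ge \rho^\ast$.

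Next, I would establish that the cosine integral is close to $\tfrac{2}{\pi}|E|$ once $|\xi_0|$ is large. Starting from the uniformly (and absolutely) convergent Fourier series
\[ |\cos\theta| \,=\, \tfrac{2}{\pi} \,+\, \sum_{k\ge 1} a_k \cos(2k\theta), \qquad |a_k| \lesssim k^{-2}, \]
one obtains
\[ \int_E |\cos(\inn{\xi_0}{x})|\,dx \,=\, \tfrac{2}{\pi}|E| \,+\, \sum_{k \ge 1} a_k\, \re \widehat{\bbone_E}(-2k\xi_0). \]
By the Riemann--Lebesgue lemma $\widehat{\bbone_E}(\eta) \to 0$ as $|\eta|\to\infty$, so each term of the series tends to zero as $|\xi_0|\to\infty$. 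Since the series is dominated by the summable sequence $|a_k| |E|$, dominated convergence on $\mathbb N$ yields
\[ \lim_{|\xi_0|\to\infty} \int_E |\cos(\inn{\xi_0}{x})|\,dx \,=\, \tfrac{2}{\pi}|E|. \]
Because $2/\pi > 1/3$, there is $M > 0$ such that $\int_E|\cos(\inn{\xi_0}{x})|\,dx \ge |E|/3$ for every $\xi_0$ with $|\xi_0|\ge M$.

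Finally, I would place the ball inside $S$. Set $\rho_0 := \rho^\ast$; by hypothesis $S$ contains a ball $B(\xi', \rho')$ with $\rho' \ge M + \rho_0$. Every $\xi_0$ in the closed ball $\overline{B(\xi', \rho' - \rho_0)}$ satisfies $B(\xi_0, \rho_0) \subset B(\xi', \rho') \subset S$, and by choosing $\xi_0$ on its boundary in the direction $\xi'/|\xi'|$ (or along any unit vector when $\xi'=0$), we secure $|\xi_0| \ge \rho' - \rho_0 \ge M$. This $\xi_0$ together with $\rho_0$ fulfills both required bounds simultaneously. The only non-mechanical ingredient is the Riemann--Lebesgue limit for the cosine integral; once the uniformly summable Fourier series of $|\cos\theta|$ is written down, dominated convergence disposes of it, and the approximation of the identity together with the ball placement are entirely routine.
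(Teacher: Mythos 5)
Your proof is correct and follows essentially the same plan as the paper: use the approximate identity to pick the radius, use the Riemann--Lebesgue lemma to push the cosine integral above $|E|/3$, and use the richness of $S$ to locate a ball whose center is far from the origin. The one place where you work harder than necessary is the cosine bound: the paper simply uses $|\cos\theta|\ge\cos^2\theta=\tfrac12(1+\cos 2\theta)$, so a single application of Riemann--Lebesgue gives $\liminf_{|\xi|\to\infty}\int_E|\cos\inn{\xi}{x}|\,dx\ge|E|/2>|E|/3$, whereas you expand $|\cos\theta|$ in its full Fourier series and invoke dominated convergence over $k$. Your version yields the sharper limit $\tfrac{2}{\pi}|E|$, which is unnecessary for the stated threshold, but it is correct. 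The ball-placement step is also equivalent to the paper's (they take a ball of radius $R_0>10\max\{R_1,R_2\}$ and set $\rho_0=R_0/4$, while you fix $\rho_0=\rho^*$ first and then find a ball of radius at least $M+\rho_0$); both choices guarantee $B(\xi_0,\rho_0)\subset S$ with $|\xi_0|$ and $\rho_0$ both above their respective thresholds.
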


\begin{proof}
Since $\{\phi_\rho\}_{\rho>0}$ form an approximation of the identity there is $R_1=R_1(S,E,\eps)$ such that
\Be \label{apprixid} \|\phi_\rho*\bbone_E-\bbone_E\|_2<\eps 
\Ee 	
 for  $\rho>R_1$.
Also observe that 
 \[
	\begin{split}
	\liminf_{|\xi| \to +\infty} \int_{E} |\cos(\inn{\xi}{x})| dx 
	&\geq \liminf_{|\xi| \to +\infty} \int_{E} \cos^2(\inn{\xi}{x}) dx \\
	&= \lim_{|\xi| \to +\infty} \int_{E} \frac{1+ \cos(2 \inn{\xi}{x})}{2} dx = \frac{|E|}{2},
	\end{split}
	\]
	by the Riemann-Lebesgue lemma. 
	Hence we find $R_2=R_2(S,E,\eps)$ such that
	\Be \label{RLlemma} \int_{E}|\cos(\inn\xi x)|dx\ge |E|/3, 
	\Ee
 for $|\xi|\ge R_2$.
 
By assumption on $S$ we can find a ball $B_0$ of radius $R_0> 
10 \max \{R_1, R_2\}$, centered at some $\Xi_0$  such that $B_0\subset S$. There is a point
$\xi_0\in B(\Xi_0, R_0/2) $  that satisfies $|\xi_0|\ge R_0/4$. Set $\rho_0=R_0/4$. The ball 
$B(\xi_0,\rho_0)$ is contained in $B_0$ and thus in $S$.
Also since $\rho_0\ge R_1$ we have  \eqref{apprixid} 
for $\rho=\rho_0$ 
and since $|\xi_0|> R_2$  we have  \eqref{RLlemma} for $\xi=\xi_0$.
	\end{proof}

\begin{proof}[Proof of Lemma \ref{lem:kara_const}]
	We will construct a sequence of sets $\{E_w\}$, radii $\rho_w$ and modulation frequencies $\xi_w$ using induction on the length of words. We use  $\eps=2^{-\mu-10}$ in Lemma \ref{basicstep}.
		
	First let $E_{\emptyset} = [0,1]^2$.
	We apply Lemma \ref{basicstep}
	 with
	$E=E_\emptyset$ and $S=S_{\tau(\emptyset)}$.
	We thus find $\xi_\emptyset$, $\rho_\emptyset$ 
	such that 
	\eqref{eq:lwchoice}, \eqref{eq:coslowerbdd},
	\eqref{eq:balllocation} 
	hold for $w=\emptyset$.
		We consider the two words of length one, i.e. $0$ and $1$ and let 
\begin{align*}
	E_{0} &:= \{x \in E_\emptyset
	\colon \cos(\inn{\xi_w }{x}) \ge  0\}
	\\
	E_{1} &:= \{x \in E_\emptyset \colon \cos(\inn{\xi_w}{x}) < 0\}
	\end{align*} 
	so that $E_\emptyset $ is a disjoint union of $E_{0}$ and $E_{1}$, and (\ref{eq:cossign}) holds for $w=\emptyset$. Clearly $[0,1]^2$ is a disjoint union of the $E_w$ with words $w$ of length $1$.

		Suppose $E_w, \rho_w, \xi_w$ are defined for all words of length $\ell<\mu-1$.
		Take any word of length $\ell+1$, of the form
		$w0$ or $w_1$ where $w$ is of length $\ell$, and where $E_w,\rho_w, \xi_w$ satisfy 
	\eqref{eq:lwchoice}, \eqref{eq:coslowerbdd},
	\eqref{eq:balllocation}, and where $[0,1]^2 $ is a disjoint union of the $E_w$ with $\text{length}(w)=\ell$.		
We let 
	\begin{align*} 
	E_{w0} &:= \{x \in E_w \colon \cos(\inn{\xi_w}{x}) \ge  0\}\\
	E_{w1} &:= \{x \in E_w \colon \cos(\inn{\xi_w}{x}) < 0\}
	\end{align*}
so that (\ref{eq:cossign}) holds, $E_w$ is a  disjoint union of $E_{w0}$ and $E_{w1}$, and thus 
$[0,1]^2$ is a disjoint union of all $E_{\overline w}$ where $\overline w $ runs over all words of length $\ell+1$.

We now use Lemma \ref{basicstep}  to find $\rho_{w0}, \xi_{w0}$ so that (\ref{eq:lwchoice}),
(\ref{eq:coslowerbdd}) and (\ref{eq:balllocation}) 
hold for $w0$ in place of $w$.
Then we use 
	 Lemma \ref{basicstep} again  to find $\rho_{w1}, \xi_{w1}$ so that (\ref{eq:lwchoice}),
(\ref{eq:coslowerbdd}) and (\ref{eq:balllocation}) 
hold for $w1$ in place of $w$.

At step $\ell=\mu-1$ this completes our construction 
of  $E_w$, $\rho_w$ and $\xi_w$ for all $w \in W_{\mu}$, and all the properties stated in Lemma~\ref{lem:kara_const} are satisfied at every stage of the construction. Note that the balls 
$B(\xi_w,\rho_w)$, $B(\xi_{\tilde w},\rho_{\tilde w})$  are disjoint for different $w$, $\tilde w$ because these balls belong to the disjoint sets  $S_{\tau(w)}$, $S_{\tau(\tilde w)}$, respectively.

Finally we have by our construction, for 
$\ell=0,\dots, \mu-1$, 
$$\sum_{w: \text{length}(w)=\ell} \bbone_{E_w}=\bbone_{[0,1]^2}, 
$$
and we obtain \eqref{eq:sumEw} by summing in $\ell$.
\end{proof}

\appendix

\section{Proof of Proposition \ref{Cotlarprop}} \label{cotlar-appendix}
The proof is a modification of the argument for   the standard Cotlar inequality regarding truncations of singular integrals, \cf. \cite[\S I.7]{Ste93}.

Let $m_j(\xi) =\eta(2^{-j}\xi)m(\xi)$ and let $a_j(\xi) = m_j(2^j\xi)$. We pick  $0<\eps<\min\{\alpha-d,1\}$. Then by assumption \Be\label{multassu}\sup_{j\in \bbZ}\|a_j\|_{\sL^1_\alpha}\le B<\infty\Ee which implies that   $|\cF^{-1} [a_j](x)|\le CB(1+|x|)^{-d-\eps}$,  and thus, with $K_j= \cF^{-1} [m_j]$,
\begin{align*} 
&|K_j(x)|+ 2^{-j} |\nabla K_j(x)| \le C B 2^{jd} (1+2^j|x|)^{-d-\eps}.
\end{align*}
For Schwartz functions $f$ we have $S f=\sum_{j\in \bbZ} K_j* f$ and $S_nf=\sum_{j\le n} K_j*f$.

\begin{lem}\label{cotlarlemmaapp} 
	Fix $\tx\in \bbR^d$ and $n\in \bbZ$, and  let 
	$g(y)= f(y)\bbone_{B(\tx, 2^{-n})}(y)$ and $h=f-g$.
	Then 
	
	(i) $|S_n g(\tx)|\lc B \, M[f](\tx)$.
	
	(ii) $|S_nh(\tx)-S h(\tx)|\lc B\, M[f](\tx)$.

	(iii) For $|w-\tx|\le 2^{-n-1}$ we have 
	$|Sh(\tx)-Sh(w)|\lc B \, M[f](\tx).$
\end{lem}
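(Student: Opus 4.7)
The plan is to establish all three estimates by direct computation using the pointwise bounds
\[ |K_j(x)|+ 2^{-j}|\nabla K_j(x)|\lesssim B\, 2^{jd}(1+2^j|x|)^{-d-\eps} \]
recorded immediately after \eqref{multassu}, combined with a dyadic annular decomposition of the integration region and a careful split between the regimes $j\le n$ and $j>n$.

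For part (i), I would write $S_n g(\tx)=\sum_{j\le n} K_j*g(\tx)$ and observe that on the support of $g$, namely $B(\tx,2^{-n})$, the condition $j\le n$ gives $2^j|\tx-y|\le 2^{j-n}\le 1$, so $|K_j(\tx-y)|\lesssim B\,2^{jd}$ uniformly. This yields
\[ |K_j*g(\tx)|\lesssim B\,2^{jd}\int_{B(\tx,2^{-n})}|f(y)|\,dy\lesssim B\,2^{(j-n)d}M[f](\tx), \]
and summing the resulting geometric series in $j\le n$ gives the claim.

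For part (ii), note that $S_nh(\tx)-Sh(\tx)=-\sum_{j>n}K_j*h(\tx)$, where $h$ is supported on $\{|y-\tx|\ge 2^{-n}\}$. I would decompose this region into dyadic annuli $A_k=\{2^{-n+k}\le |y-\tx|<2^{-n+k+1}\}$ with $k\ge 0$, where the decay factor $(1+2^j|\tx-y|)^{-d-\eps}\lesssim 2^{-(j-n+k)(d+\eps)}$ is effective because $j>n$. A short computation bounds the contribution of each annulus by $B\,2^{-(j-n)\eps}2^{-k\eps}M[f](\tx)$, and summing first in $k\ge 0$ and then in $j>n$ produces the desired bound, using that $\eps>0$.

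For part (iii), the key geometric observation is that for $|\tx-w|\le 2^{-n-1}$ and $y$ in $\text{supp}(h)$ (so $|y-\tx|\ge 2^{-n}$), every point $z$ on the segment from $\tx$ to $w$ satisfies $|z-y|\approx |\tx-y|\approx |w-y|$. I would split $Sh(\tx)-Sh(w)=\sum_j (K_j*h(\tx)-K_j*h(w))$ at $j=n$. For $j\le n$, the mean value theorem together with the gradient bound produces
\[ |K_j(\tx-y)-K_j(w-y)|\lesssim 2^{-n} B\,2^{j(d+1)}(1+2^j|\tx-y|)^{-d-\eps}, \]
and the same dyadic-annuli argument used in (ii) (now with no restriction on $k$ other than $k\ge 0$) yields a contribution $\lesssim 2^{-n}B\,2^j M[f](\tx)$ per $j$, whose sum over $j\le n$ is $\lesssim B\,M[f](\tx)$. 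For $j>n$, I would bound $|K_j*h(\tx)|$ and $|K_j*h(w)|$ separately by the argument of part (ii), exploiting that $|y-w|\approx |y-\tx|$ so that the estimate for $K_j*h(w)$ is controlled by $M[f](\tx)$, and summing the resulting geometric series.

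The main obstacle I anticipate is the bookkeeping in part (iii): one must verify that the gradient-based estimate for $j\le n$ and the size-based estimate for $j>n$ each produce a geometric series converging at the correct endpoint $j=n$, and that the constants that arise from the annular decomposition do not destroy the $2^{-n}$ savings given by $|\tx-w|\le 2^{-n-1}$. The parts (i) and (ii) are routine by comparison.
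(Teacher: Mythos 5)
Your proposal is correct and follows essentially the same route as the paper: the kernel bounds $|K_j|+2^{-j}|\nabla K_j|\lesssim B\,2^{jd}(1+2^j|\cdot|)^{-d-\eps}$ derived from \eqref{multassu}, the dyadic annular decomposition around $\tx$, the split at $j=n$, and for (iii) the use of the mean value theorem for $j\le n$ combined with a reuse of the part-(ii) argument (applied to both $\tx$ and $w$, exploiting $|w-y|\approx|\tx-y|$) for $j>n$. The only cosmetic difference is that after summing over annuli for $j\le n$ you record the sharper per-$j$ bound $B\,2^{j-n}M[f](\tx)$, whereas the paper carries the slightly weaker but equally summable bound $B\,2^{(j-n)(1-\eps)}M[f](\tx)$; both give the result.
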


\begin{proof} By appropriate normalization of the multiplier we may assume $B=1$.
	
	(i) is immediate since  for $j\le n$ 
	$$|K_j*g(\tx)|\lc 2^{jd}\int_{|\tx-y|\le 2^{-n}}|g(y)| dy \lc 2^{(j-n)d} M[g](\tx)$$
	and the assertion follows since $|g|\le |f|$.
	
	For (ii)  notice that $|S_n h(\tx)-S h(\tx)|\le \sum_{j>n}|K_j* h(\tx)|$.
	For $j>n$ we estimate
	\begin{align*} |K_j*h(\tx)|
	&\lc 2^{-j\eps}\int_{|\tx-y|\ge 2^{-n}} 
	|\tx-y|^{-d-\eps}|h(y)|dy
	\\
	&\lc2^{-j\eps} \sum_{l\ge 0} 2^{(n-l)\eps}\intslash_{B(\tx, 2^{l-n})} |h(y)| dy 
	\end{align*}
	where the slashed integral denotes the average. Thus we get 
	\[\sum_{j\ge n}  |K_j*h(\tx)|\lc M[h](\tx)\] and, since $|h|\le |f|$, the assertion follows.
	
	Concerning (iii) we consider the terms $K_j*h(\tx)-K_j*h(w)$ separately for  $j\le n$ and $j>n$. The term $\sum_{j>n} |K_j*h(\tx)|$ was already dealt with in (ii). 
	Since $|w-\tx|\le 2^{-n-1}$ we  have 
	$|w-y|\approx |\tx-y| $ for $|\tx-y|\ge 2^{-n}$ and thus the previous calculation also yields 
	\[\sum_{j> n}  |K_j*h(w)|\lc M[h](\tx) \lc Mf(\tx).\]
	It remains to consider the terms for $j\le n$.
	In that range we write
	$$K_j*h(\tx)-K_j*h(w)
	= \int_0^1\int\limits_{|\tilde x-y|\ge 2^{-n} }\inn{\tx-w}{\nabla K_j (w+s(\tx-w)-y)} h(y) dy\, ds.
	$$ Since $|w-\tx| \le 2^{-n-1}$ we can replace 
	$|w+s(\tx-w)-y| $ in the integrand  with $|\tx-y|$ and  estimate the displayed expression by $C\sum_{l\ge 0} A_{l,j,n}$ where
	\begin{align*} A_{l,j,n}&= 2^j|\tx-w|
	\int_{2^{-n+l-1} \le |\tx-y|\le 2^{-n+l}}
	\frac{2^{jd} }{(1+2^j|\tx-y|)^{d+\eps}} |h(y)|dy
	\\
	&\lc 2^{(j-n)(1-\eps)} 2^{-l\eps} \intslash_{B(\tx, 2^{l-n})} |h(y)| dy.
	\end{align*}
	Summing in $\l>0$ and then $j\le n$ yields 
	\Be
	\sum_{j\le n} 
	|K_j*h(\tx)-K_j*h(w)| \lc Mh(\tx) \lc Mf(\tx). \qedhere
	\Ee
\end{proof} 

\begin{proof}[Proof of \eqref{Sstarineq}]
	We proceed arguing as in \cite[\S I.7]{Ste93}.
	Fix $\tx\in \bbR^d$ and $n\in \bbZ$ and define $g$ and $h$ as in the lemma. For (suitable) $w$ with $|w-\tx|\le 2^{-n-1}$ we write
	\begin{align}
	S_n f(\tx)&= S_n g(\tx)+ (S_n-S)h(\tx) + Sh(\tx)
	\notag \\
	\label{cotlardecomp}
	&=S_n g(\tx)+ (S_n-S)h(\tx) + Sh(\tx)- Sh(w)
	+ Sf(w)- Sg(w).
	\end{align}
	By Lemma \ref{cotlarlemmaapp}
	\[|S_n g(\tx)|+ |(S_n-S)h(\tx)| + |Sh(\tx)- Sh(w)|
	\lc  B \,M[f](\tx)\]
	and it  remains to consider the term $Sf(w)-Sg(w)$ for a substantial set  of $w$ with $|w-\tx|\le 2^{-n-1}$.  
	
	By the Mikhlin-H\"ormander theorem we have for all $f\in L^1(\bbR^d)$ and all $\lambda>0$
	\[\meas( \{x: |Sf(x)|>\la\}) \le A \la^{-1} \|f\|_1 \]
	where $A \le C_{\alpha,d}B$.
	
	Now let $\delta\in (0,1/2)$ and consider the 
	set
	$$\Omega_n(\tx, \delta)=\big\{ w: |w-\tx|< 2^{-n-1},
	\quad |Sg(w)|> 2^d\delta^{-1} A \,M[f](\tx)\big\}.$$
	In  \eqref{cotlardecomp} we  can estimate
	the term $|Sg(w)|$ by
	$2^d\delta^{-1} A \,M[f](\tx)$ when
	$w\in B(\tx, 2^{-n-1}) \setminus \Om_n(\tx,\delta)$.
	Hence we obtain
	\Be\label{infineq}
	|S_n f(\tx)| \le 
	\inf_{w\in B(\tx, 2^{-n-1}) \setminus \Om_n(\tx,\delta)} 
	|Sf(w)| 
	+ C(\alpha, d) B (1+\delta^{-1}) M[f](\tx).
	\Ee By the weak type inequality for $S$ we have
	\begin{align*} \meas (\Omega_n(\tx,\delta)) &\le 
	\frac{A\|g\|_1}{2^d\delta^{-1} A M[f](\tx)}
	=\frac{\delta}{2^d M[f](\tx)}\int_{|\tx-y|\le 2^{-n}}|f(y)| dy
	\\
	&\le \delta \, 2^{-d}\,\meas (B(\tx, 2^{-n}))
	=\delta \, \meas (B(\tx, 2^{-n-1})).
	\end{align*}
	Hence  $\meas( B(\tx, 2^{-n-1}) \setminus \Om_n(\tx,\delta))\ge (1-\delta)  \meas (B(\tx, 2^{-n-1}))$
	and thus for all $r>0$ 
	\begin{align*}
	&\inf_{w\in B(\tx, 2^{-n-1}) \setminus \Om_n(\tx,\delta)} 
	|Sf(w)| \\ &\le \Big(\frac{1} {\meas(B(\tx, 2^{-n-1}) \setminus \Om_n(\tx,\delta))} \int_{B(\tx, 2^{-n-1})}  |Sf (w)|^r dw\Big)^{1/r}
	\\
	&\le \Big(\frac{1} {(1-\delta)|B(\tx, 2^{-n-1})|}  \int_{B(\tx, 2^{-n-1})}  |Sf (w)|^r dw\Big)^{1/r}.
	\end{align*} We obtain 
	\[
	|S_n f(\tx)| \le  (1-\delta)^{-1/r} (M[|Sw|^r](\tx))^{1/r}
	+ C(\alpha,d) (1+\delta^{-1}) B\,M[f](\tx)\]
	uniformly in $n$. This implies \eqref{Sstarineq}.
\end{proof}

\section{ Proof of the  Chang-Wilson-Wolff inequality}\label{appendixB}

In this section we prove Proposition \ref{prop:CWW85}. For $m\in\Z$ we define
\begin{align*} \mathcal{M}_m f(x) &= \sup_{j\ge m} |\mathbb{E}_j f(x) - \mathbb{E}_m f(x) |, 
\\
\mathfrak{S}_m f(x) &= \Big( \sum_{j=m}^\infty |\mathbb{D}_j f(x)|^2\Big)^{1/2}.  
\end{align*} 
We show that for real valued $f\in L^\infty(\bbR^d)$,

\Be\label{CWWineq0}\begin{aligned} \meas\Big(
	&\Big \{x \in \R^d \colon |f(x)-\mathbb{E}_0 f(x)| > 2 \lambda \text{ and } \fS_0 f(x) \leq \varepsilon \lambda \Big \} \Big)
	\\
	&\qquad
	\leq 2 \exp(- \frac{(1-\eps)^2}{2\eps^2}) \meas\Big(\Big\{x \in \R^d \colon \mathcal{M}_0 f(x) > \lambda \Big \} \Big).
\end{aligned}\Ee
We shall give the proof for the convenience of the reader. It is due to Herman Rubin (simplifying an earlier argument by Chang, Wilson and Wolff as explained in \cite{CWW85}).

First, we claim that if $n$ is a non-negative integer, $I_n$ is a dyadic cube of side length $2^{-n}$ in $\R^d$, and $I_{n,a} := \{x \in I_n \colon \fS_0 f(x) < a \}$ where $a > 0$, then\begin{subequations} 
\label{eq:|int_e_f-E_0|}
\begin{align} 
\label{eq:int_e_f-E_0}
&\frac{1}{|I_n|} \int_{I_{n,a}} e^{t [f(x)-\bbE_n f(x)]} dx \leq e^{\frac{1}{2} t^2 a^2},
\\
\label{eq:-(int_e_f-E_0)}
&\frac{1}{|I_n|} \int_{I_{n,a}} e^{-t [f(x)-\bbE_n f(x)]} dx \leq e^{\frac{1}{2} t^2 a^2}.
\end{align}
\end{subequations}
for every $t > 0$. Indeed, for every such $I_n$, $a$ and $t$, we have, by the Lebesgue differentiation theorem,
and dominated convergence,  that
$$
\frac{1}{|I_n|} \int_{I_{n,a}} e^{t [f(x)-\bbE_n f(x)]} dx
= \lim_{m \to \infty} \frac{1}{|I_n|}  \int_{I_{n,a}} e^{t [\bbE_m f(x) - \bbE_n f(x)]} dx,
$$
while for every $m \geq n$, 
\begin{multline} \label{eq:proof_martingale}
\frac{1}{|I_n|} \int_{I_{n,a}} e^{t [\bbE_m f(x) - \bbE_n f(x)]} dx\\
\leq \frac{1}{|I_n|} \int_{I_n} \frac{ e^{t [E_m f(x) - E_n f(x)]} }{ \prod_{j=n}^{m-1} \bbE_j(e^{t \bbD_j f})(x) } dx \,\,  \Big \| \prod_{j=n}^{m-1} E_j(e^{t \bbD_j f}) \Big \|_{L^{\infty}(I_{n,a})} .
\end{multline}
But 
\beq \label{eq:martingale}
\frac{1}{|I_n|} \int_{I_n} \frac{ e^{t [\bbE_m f(x) - \bbE_n f(x)]} }{ \prod_{j=n}^{m-1} \bbE_j(e^{t \bbD_j f})(x) } dx = 1
\endeq
for every $m \geq n$, since the integrand forms a martingale on $I_n$. More precisely, (\ref{eq:martingale}) is clearly true if $m = n$, and if this is true for some $m \geq n$, then for any dyadic cube $I_m$ of side length $2^{-m}$ inside $I_n$, we have
\[
\int_{I_m} \frac{ e^{t [\bbE_{m+1} f(x) - \bbE_n f(x)]} }{ \prod_{j=n}^{m} \bbE_j(e^{t \bbD_j f})(x) } dx 
= \int_{I_m} \frac{ e^{t \bbD_m f(x)}}{ \bbE_m(e^{t \bbD_m f})(x) } \cdot \frac{ e^{t [\bbE_m f(x) - \bbE_n f(x)]} }{\prod_{j=n}^{m-1} \bbE_j(e^{t \bbD_j f})(x) } dx
\]
Since the second fraction in the integrand is constant on $I_m$, this gives
\[
\int_{I_m} \frac{ e^{t [\bbE_{m+1} f(x) - \bbE_n f(x)]} }{ \prod_{j=n}^{m} \bbE_j(e^{t \bbD_j f})(x) } dx
= \int_{I_m} \frac{ e^{t [\bbE_m f(x) - \bbE_n f(x)]} }{ \prod_{j=n}^{m-1} \bbE_j(e^{t \bbD_j f})(x) } dx,
\]
which gives (\ref{eq:martingale}) for $m+1$ in place of $m$ upon summing over all the $I_m$'s inside $I_n$ and using the induction hypothesis. Now from $\mathbb{E}_j(\bbD_j f) = 0$, we have
$$
\bbE_j(e^{t \bbD_j f})(x) = \cosh( t \bbD_j f(x) ) \leq e^{\frac{1}{2} t^2 |\bbD_j f(x)|^2}
$$
for all $x$, so
$$
\prod_{j=n}^{m-1} \bbE_j(e^{t \bbD_j f})(x) \leq e^{\frac{1}{2} t^2 \fS_0 f(x)^2},
$$
which gives
$$
\Big \| \prod_{j=n}^{m-1} \bbE_j(e^{t \bbD_j f}) \Big \|_{L^{\infty}(I_{n,a})}
\leq e^{\frac{1}{2} t^2 a^2}.
$$
In view of (\ref{eq:proof_martingale}) and (\ref{eq:martingale}), we have established our claim (\ref{eq:int_e_f-E_0}).
 Replacing $f$ by $-f$ we also obtain 
\eqref{eq:-(int_e_f-E_0)}.

Now consider any $n \geq 0$ and any dyadic cube  $I_n$ of side length $2^{-n}$. From \eqref{eq:int_e_f-E_0}, \eqref{eq:-(int_e_f-E_0)} and Chebyshev's inequality, we have, for any $\lambda > 0$ and $a > 0$, that
$$
\meas\big(\{x \in I_n \colon |f(x) - \bbE_n f(x)| > \lambda \text{ and } \fS_0 f(x) < a\}\big) \leq 2e^{-t \lambda} e^{\frac{1}{2} t^2 a^2} |I_n|
$$
for all $t > 0$, so minimizing over $t > 0$ (i.e. setting $t = \lambda a^{-2}$), we have
\beq \label{eq:f-EnChebeyshev}
\meas\big(\{x \in I_n \colon |f(x) - \bbE_n f(x)| > \lambda \text{ and } \fS_0 f(x) < a \}\big) \leq 2 e^{-\frac{\lambda^2}{2 a^2}} |I_n|
\endeq

Let $I_0$ be any dyadic cube of side length $1$, and let $\mathcal{I}$ be a collection of maximal dyadic subcubes $I$ of $I_0$ such that 
$$
\left | \frac{1}{|I|} \int_I (f - \bbE_0 f) \right | > \lambda.
$$ 
For each $I \in \mathcal{I}$ consider the following subset of $I$:
$$
\{ x \in I \colon |f(x) - \bbE_0 f(x)| > 2 \lambda \text{ and } \fS_0 f(x) \leq \varepsilon \lambda \}.
$$
If this subset of $I$ is non-empty and $|I| = 2^{-n}$, then by considering the dyadic parent of $I$ and using the existence of $x \in I$ where $\fS_0 f(x) \leq \varepsilon \lambda$, in particular  $|\bbE_{n-1} f- \bbE_n f|\le \eps\la$,  we have that 
$$
\Big| \frac{1}{|I|} \int_I (f - \bbE_0 f) \Big| \leq (1 + \varepsilon) \lambda,
$$
and so
$$
|\bbE_n (f- \bbE_0 f)(x)| \leq (1 + \varepsilon) \lambda
$$
for every $x \in I$. It follows that
\[
\begin{split}
&\{ x \in I \colon |f(x) - \bbE_0 f(x)| > 2 \lambda \text{ and } \fS_0 f(x) \leq \varepsilon \lambda \} \\
&\subseteq 
\{ x \in I \colon |(f-\bbE_0 f)(x) -\bbE_n(f-\bbE_0 f)(x)| > (1-\varepsilon) \lambda \text{ and } \fS_0 f(x) \leq \varepsilon \lambda \},
\end{split}
\]
which by (\ref{eq:f-EnChebeyshev}) (applied to $f-\bbE_0 f$ instead of $f$) has measure bounded by $2 e^{-\frac{(1-\varepsilon)^2}{2 \varepsilon^2}} |I|$. 
Since this is true for all $I \in \mathcal{I}$, summing over all $I \in \mathcal{I}$, we get 
\begin{align*}
\meas\big( \{x \in I_0 \colon |f(x) - \bbE_0 f(x)| > 2 \lambda 
\text{ and } \fS_0 f(x) \leq \varepsilon \lambda \} \big)&
\\
\leq  2e^{-\frac{(1-\eps)^2}{2 \varepsilon^{2}}}
 \meas\big( \{x \in I_0 \colon \mathcal{M}_0 f(x) > \lambda \} \big)&
\end{align*}
for all $\lambda > 0$ and $0 < \varepsilon < 1$. Summing over all dyadic cubes $I_0$ of side length $1$, we get the desired conclusion in \eqref{CWWineq0}.

To prove 
\eqref{CWWineq} for real-valued functions  we use a scaling argument,  applying the above to  $f(2^N \cdot)$.
This leads to
\begin{multline} \label{eqn:cww2}
 \meas(\{ x\in\R^d\,:\, |f(x)-\mathbb{E}_{-N} f(x)|> 2\lambda,\,\mathfrak{S}_{-N} f(x)\le \varepsilon \lambda \}) \\
\le  2\exp\big(- \tfrac{(1-\eps)^2}{2\eps^2}\big) 
\meas\big(\{ x\in\R^d\,:\, \mathcal{M}_{-N} f(x)>\lambda\}\big). \end{multline}
Since $f\in L^p(\R^d)$ we have $\|\bbE_{-N} f\|_\infty \le 2^{-Nd/p}\|f\|_p$ and thus
 $\mathbb{E}_{-N} f\to 0$ uniformly  as $N\to\infty$. 
 Let $0<\delta\ll 1$. Pick $N$ such that 
 \[2^{-Nd/p}\|f\|_p<\delta.\]
  Then $|f(x)|>2\la+\delta$ implies 
 $|f(x)-\bbE_{-N} f(x)|> 2\la$ for such a choice of $N$. We also have $\fS_{-N} f(x)\le \fS f(x)$, $\cM_{-N} f(x)\le \cM f(x)$,  and thus, for $\eps<1$,
 \begin{align*}
 & \meas(\{ x\in\R^d\,:\, |f(x)|>2\la+\delta, \,\mathfrak S f(x)\le \eps\la\})
 \\
 &\quad
 \le \meas(\{ x\in\R^d\,:\, |f(x)-\mathbb{E}_{-N} f(x)|> 2\lambda,\,\mathfrak{S}_{-N} f(x)\le \varepsilon \lambda \}) \\
&\quad \le 2 \exp\big(- \tfrac{(1-\eps)^2}{2\eps^2}\big) 
\meas\big(\{ x\in\R^d\,:\, \mathcal{M}_{-N} f(x) > \lambda \}) 
\\
 &\quad\le  2\exp\big(- \tfrac{(1-\eps)^2}{2\eps^2}\big) 
\meas\big(\{ x\in\R^d\,:\, \mathcal{M} f(x) > \lambda\}) .
 \end{align*}
 We let $\delta\to 0$  and obtain
 \begin{multline*}\label{globalcwwforrealvalued}
  \meas(\{ x\in\R^d\,:\, |f(x)|>2\la, \mathfrak S f(x)\le \eps\la\})
 \\ \le 2 \exp\big(- \tfrac{(1-\eps)^2}{2\eps^2}\big) 
\meas\big(\{ x\in\R^d\,:\, \mathcal{M} f(x) > \lambda\}\big).
 \end{multline*}
 
For complex valued functions we apply \eqref{CWWineq0}  to the  real and imaginary parts and we obtain
\Be\label{globalcwwforcplvalued} 
\begin{aligned}
 & \meas(\{ x\in\R^d\,:\, |f(x)|>2\sqrt 2\la, \mathfrak S f(x)\le \eps\la\})
 \\& \le 
 4 \exp\big(- \tfrac{(1-\eps)^2}{2\eps^2}\big) 
\meas\big(\{ x\in\R^d\,:\, \mathcal{M} f(x) > \lambda\}\big).
 \end{aligned}
\Ee
In particular we obtain Proposition \ref{prop:CWW85} 
(where $\eps<1/2$)
with the constants $c_1=1/8$ and $c_2=4$. \qed
\newcommand{\etalchar}[1]{$^{#1}$}

\newpage

\end{document}